\newcommand{\rank}{\mathrm{rank\,}}		
\newcommand{\cat}{\mathrm{Cat}}		
\newcommand{\HF}{\mathit{h}}				
\newcommand{\rk}{\mathrm{rk}}			
\newcommand{\reg}{\mathrm{reg}}		
\newcommand{\van}{\mathrm{van}}		
\newcommand{\CC}{\mathbb{C}}
\newcommand{\NN}{\mathbb{N}}
\newcommand{\PP}{\mathbb{P}}
\newcommand{\XX}{\mathbb{X}}
\newcommand{\caB}{\mathcal{B}}
\newcommand{\caF}{\mathcal{F}}
\newcommand{\calU}{\mathcal{U}}
\newcommand{\caW}{\mathcal{W}}
\newcommand{\bfx}{\mathbf{x}}
\newcommand{\bfy}{\mathbf{y}}
\newcommand{\ttF}{\mathtt{F}}
\newcommand{\Alessandro}[1]{#1}
\theoremstyle{plain}
\newtheorem{theorem}{Theorem}[section]
\newtheorem{lemma}[theorem]{Lemma}
\newtheorem{proposition}[theorem]{Proposition}
\newtheorem{corollary}[theorem]{Corollary}
\newtheorem{question}[theorem]{Question}
\theoremstyle{remark}
\newtheorem{definition}[theorem]{Definition}
\newtheorem{remark}[theorem]{Remark}
\newtheorem{example}[theorem]{Example}
\newtheorem{notation}[theorem]{Notation}
\title{On minimal decompositions of low rank symmetric tensors}
\author[B. Mourrain]{Bernard Mourrain}
\address[B. Mourrain]{INRIA Sophia Antipolis M\'editerran\'ee (team Aromath), Sophia Antipolis, France}
\email{bernard.mourrain@inria.fr}
\author[A. Oneto]{Alessandro Oneto}
\address[A. Oneto]{Universitat Polit\`ecnica de Catalunya, Barcelona, Spain}
\email{alessandro.oneto@upc.edu}
\keywords{Waring rank, symmetric tensors, homogeneous polynomials, ideals of points, Hilbert function, apolarity.} 
\subjclass[2010]{Primary 13P05, 14N20 \; Secondary 13F20, 14N05}
\date{\today}
\begin{document}
	\maketitle
 
	\begin{abstract}
		We use an algebraic approach to construct minimal decompositions of symmetric tensors with low rank. \Alessandro{This is done by using Apolarity Theory and by studying minimal sets of reduced points apolar to a given symmetric tensor, namely, whose ideal is contained in the apolar ideal associated to the tensor. In particular, we focus on the structure of the Hilbert function of these ideals of points. We give a procedure which produces a minimal set of points apolar to any symmetric tensor of rank at most $5$. This procedure is also implemented in the algebra software {\it Macaulay2}.}
	\end{abstract}
	
	\section{Introduction}
	{\it Tensors} are multi-dimensional arrays that can be used to encode large data sets. For applications, it is useful to find convenient ways to represent them and, in the last decades, a lot of research has been focused on {\it additive decompositions}. For a more extensive survey on the relations between theoretical and applied aspects of tensor decompositions, we refer to the book of J. M. Landsberg \cite{Lan12}.
	
	\medskip
	In the space of tensors $\CC^{n_1+1}\otimes\ldots\otimes\CC^{n_d+1}$, we call {\it decomposable} or {\it rank-$1$ tensors} the elements of the type $v_1\otimes\ldots\otimes v_d$, where $v_i \in \CC^{n_i+1}$. Given a tensor $T \in \CC^{n_1+1}\otimes\ldots\otimes\CC^{n_d+1}$, we call {\it tensor decomposition} an expression of $T$ as sum of decomposable tensors, i.e.,
	$$
		T = \sum_{i=1}^r v_{i,1}\otimes\ldots\otimes v_{i,d},\quad \text{ where } v_{i,j}\in\CC^{n_j+1},
	$$
	and the smallest length of such a decomposition is called {\it tensor rank} of $T$. We call {\it rank} of $T$ the smallest possible length of such an expression. Note that this definition generalizes the notion of rank of a matrix which may be defined as the smallest number of rank-$1$ matrices needed to write the matrix as their sum.
	
	\medskip
	An important family of tensors is the one of {\it symmetric tensors}, i.e., the tensors invariant under the action of the permutation group on $d$ objects $\mathfrak{S}_d$ on the space of tensors $\CC^{n_1+1}\otimes\ldots\otimes\CC^{n_d+1}$ by permutation of the factors. In this case, we consider additive decompositions as sums of rank-$1$ symmetric tensors. 
	
	If we consider the case $n_1 = \ldots = n_d = n$, symmetric tensors can be naturally identified with homogeneous polynomials of degree $d$ in $n+1$ variables. For example, if $\{x_0,\ldots,x_n\}$ is a basis for $\CC^{n+1}$, a monomial $x_{i_1}\cdots x_{i_d}$ is identified with the symmetric tensor $\sum_{\sigma \in \mathfrak{S}_d} x_{i_{\sigma(1)}}\otimes\ldots\otimes x_{i_{\sigma(d)}}$. Since rank-$1$ symmetric tensors are the ones of the type $v^{\otimes d} = v\otimes \ldots \otimes v$, with the previous identification, they corresponds to $d$th powers of homogeneous polynomials of degree $1$. Therefore, in the case of symmetric tensors, we rephrase the aforementioned problem on additive decomposition as follows.
	
	\medskip
	Let $S = \CC[x_0,\ldots,x_n] = \bigoplus_{d \geq 0} S_d$ be the standard graded ring of polynomials in $n+1$ variables and with complex coefficients. Here, $S_d$ denotes the vector space of degree $d$ homogeneous polynomials, or {\it forms}.
	
	\begin{definition}
		Let $f \in S_d$ be a form of degree $d$. A {\bf Waring decomposition} of $f$ is an expression as
		$$
			f = \ell_1^d + \ldots + \ell_s^d,\text{ where the } \ell_i \text{'s are linear forms.}
		$$
		The minimal length of such a decomposition is called the {\bf Waring rank}, or {\bf rank}, of $f$. We denote it $\rk(f)$.
	\end{definition}
	
	Hence, our general question is the following.
	
	\begin{question}
		Given $f \in S_d$, what is the rank of $f$? Can we provide a minimal Waring decomposition?
	\end{question}
	
	For {\it general} forms of fixed degree and fixed number of variables, the value of the rank is known due to the result of J. Alexander and A. Hirschowitz \cite{AH95}. In the case of specific polynomials, the question is much more difficult. The case of binary forms (two variables) is very classical and due to J. J. Sylvester \cite{Syl51}. In the case of monomials, E. Carlini, M. V. Catalisano and A. V. Geramita gave a very explicit formula just in terms of the exponents of the monomial \cite{CCG12}. In general, several algorithms have been described, but they efficiently work under certain constrains on the given polynomial \cite{BCMT10,BGI11,OO13}.
	
	\medskip
	Our approach to computing Waring decomposition is algebraic. By Apolarity Theory, minimal Waring decompositions of a given polynomial correspond to sets of reduced points in projective space {\it apolar to the polynomial}, i.e., sets of points whose ideal is contained in the so-called {\it apolar ideal} of the polynomial. This theory is explained in details in the book of A. Iarrobino and V. Kanev \cite{IK06}. Under such a correspondence, the coordinates of the points are the coefficients of the linear forms that can be used to provide a Waring decomposition of the polynomial. In particular, the minimal cardinality of such a set of points coincides with the Waring rank of the polynomial. In this paper, we focus on invariants of ideals of sets of points apolar to a given polynomial as their {\it Hilbert function} and their {\it regularity}.
	
	\medskip
	Although this algebraic approach to Waring decompositions is very classical and it basically goes back to the work of J. J. Sylvester on binary forms, we use a new tool which we believe have potential for further investigation. This is the concept of {\it Waring locus} of a polynomial which is defined as the locus of linear forms that may appear in a minimal Waring decomposition \cite{CCO17}. The idea behind this construction is to find a way to decompose a given polynomial by adding one power at the time, namely by taking {\it step-by-step} a linear form in the Waring locus of the polynomial. In particular, this idea has a twofold use. 
	
	If the Waring locus is as big as possible, i.e., it is dense in the space of linear forms, it means that we can actually pick a {\it random} linear form to start our decomposition. This is what happen for form with rank higher than the {\it generic rank}, i.e., the rank of the general form. On the other hand, if the Waring is (contained in) a proper subvariety of the space of linear forms, we have conditions on the coefficients of the linear form we need to start our decomposition. In this case, with some further analysis on algebraic and geometric properties of the Waring locus, we may find a way to reduce the rank of our polynomial. 
	
	\medskip
	By using these ideas, we describe how to find a minimal Waring decomposition of homogeneous polynomials of low rank, for any number of variables and any degree\Alessandro{; see Theorem \ref{thm: main}.} These methods can be extended to forms of higher rank, but, since the cases to study grows very quickly and they might need some {\it ad hoc} argument, we applied them to completely describe all cases up to rank $5$. 
	
	\medskip
	We think it is worth mentioning that our computations left us with an intriguing algebraic question that should be investigated further. We can consider all the minimal sets of points apolar to a given polynomial and we might look at which algebraic and geometric properties they share. As far as we know, the only results in this direction regard: binary forms, where they obviosuly share the same Hilbert function since they are defined by principal ideals with the generator equal to the rank of the binary form; and monomials, where we know that they are complete intersections with the generators in the same degrees \cite{BBT13}.
	
	\subsection*{Structure of the paper.}
	In Section \ref{sec: basic}, we introduced the necessary background and the tools we use in our computations. These include Apolarity Theory (Section \ref{ssec: apolarity}), regularity of ideals of reduced points (e.g., see Theorem \ref{thm: regularity}), essential number of variables (Section \ref{ssec: essential}) and Waring loci (Section \ref{ssec: loci}). In Section \ref{sec: low}, we use these tools to study minimal sets of \Alessandro{points apolar to polynomial of low rank (e.g., see Proposition \ref{prop: rank 4} and Proposition \ref{prop: rank 5}). In Section \ref{sec: main}, we give our main Theorem \ref{thm: main} where we describe a procedure to find a minimal set of points apolar to any polynomial of rank at most $5$.} In Section \ref{sec:M2}, we implement our computations with the algebra software {\it Macaulay2} \cite{M2}. The code of the package {\tt ApolarLowRank} can be found as ancillary material accompanying the arXiv and \Alessandro{the HAL versions} of the paper or on the personal webpage of the second author.
	
	\subsection*{Acknowledgements.}
\Alessandro{	The second author acknowledges a postdoctoral research fellowship at INRIA - Sophia Antipolis M\'editerran\'ee (France) in the team AROMATH, during which this project started. The second author also acknowledges financial support from the Spanish Ministry of
Economy and Competitiveness, through the Mar\'ia de Maeztu Programme for Units of
Excellence in R\&D (MDM-2014-0445).}
			
	\section{Basic definitions and background}\label{sec: basic}
	We start by recalling some basic definitions and construction.
	
	\subsection{Apolarity Theory}\label{ssec: apolarity}
	One of the most important algebraic tools for studying Waring decompositions of homogeneous polynomials is {\it Apolarity Theory}, which relates Waring decompositions of a polynomial $f$ to ideals of reduced points contained in the so-called {\it apolar ideal} of $f$. For more details, we refer to \cite{IK06}.
	
	\smallskip
	Let $T = \CC[y_0,\ldots,y_n] = \bigoplus_{d\geq 0} T_d$ be a standard graded polynomial ring. We define the {\bf apolar action of $T$ over $S$} by identifying the polynomials in $T$ with partial differentials over $S$; namely,
	$$
		\circ : T \times S \longrightarrow S,~~ (G,f) \mapsto G \circ f := G(\partial_0,\ldots,\partial_n)\cdot f.
	$$
	\begin{definition}
		Let $f \in S_d$. We define the {\bf apolar ideal} of $f$ as
		$$
			f^\perp := \{G \in T ~|~ G \circ f = 0\}.
		$$
		We denote by $A_f$ the quotient ring $T/f^\perp$.
	\end{definition}
	\begin{remark}
		An important and useful property of apolar ideals is that, for any $f \in S_d$, the algebra $A_f$ is {\it Artinian Gorenstein} with socle degree $d$. Actually, also the viceversa is true, i.e., any artinian Gorenstein algebra is isomorphic to $A_f$, for some $f$. This characterization is referred as {\it Macaulay's duality} \cite{Mac16}. 
	\end{remark}
	The following lemma is the key of our algebraic approach to Waring decompositions.
	\begin{lemma}[Apolarity Lemma, {\rm \cite[Lemma
            1.15]{IK06}}]\label{lemma: apolarity}
		Let $f \in S_d$. Then, the following are equivalent:
		\begin{enumerate}
			\item $f = c_1\ell_1^d + \ldots + c_s\ell_s^d$, for some $c_i \in \CC\setminus\{0\}$, $\ell_i \in S_1\setminus\{0\}$;
			\item $I_{\XX} \subset f^\perp$, where $I_{\XX}$ is the defining ideal of $s$ reduced points in $\PP^n$.
		\end{enumerate}
		In particular, if $\XX = \{\xi_1,\ldots,\xi_s\} \subset \PP^n$, with $\xi_i = (\xi_{i,0}:\ldots:\xi_{i,n})$, then $\ell_i = \ell_{\xi_i} := \xi_{i,0}x_0+\ldots+\xi_{i,n}x_n \in S_1$.
	\end{lemma}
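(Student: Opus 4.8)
The plan is to prove the two implications separately, using a single elementary computation of the apolar action on powers of a linear form as the common engine. Fix a linear form $\ell = a_0x_0 + \ldots + a_nx_n$ and let $\xi = (a_0:\ldots:a_n) \in \PP^n$ be the associated point, with (radical, hence saturated) ideal $I_\xi \subset T$. The first step is the identity, valid for every $G \in T_t$,
\[
G \circ \ell^d =
\begin{cases}
\frac{d!}{(d-t)!}\, G(a_0,\ldots,a_n)\, \ell^{d-t}, & t \le d,\\
0, & t > d,
\end{cases}
\]
which follows from linearity by checking it on monomials $G$ and inducting on $t$. An immediate consequence is $(I_\xi)_t \subseteq (\ell^d)^\perp$ for every $t$ --- for $t \le d$ because $G(\xi) = 0$ kills the right-hand side, for $t > d$ for degree reasons --- so $I_\xi \subseteq (\ell^d)^\perp$ for any point $\xi$.

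For $(1)\Rightarrow(2)$, let $\XX = \{\xi_1,\ldots,\xi_s\}$ be the points attached to $\ell_1,\ldots,\ell_s$, so $I_\XX = \bigcap_{i=1}^s I_{\xi_i}$. If $G \in I_\XX$ then $G \circ \ell_i^d = 0$ for each $i$ by the first step, hence $G \circ f = \sum_i c_i(G\circ\ell_i^d) = 0$, i.e. $G \in f^\perp$; thus $I_\XX \subseteq f^\perp$.

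The substance is in $(2)\Rightarrow(1)$, where I would exploit that the apolar pairing $T_d \times S_d \to \CC$, $(G,g)\mapsto G\circ g$, is perfect, so that it identifies $S_d$ with $T_d^\vee$ and satisfies $V^{\perp\perp} = V$, $\dim V^\perp = \dim S_d - \dim V$ for every subspace $V \subseteq T_d$. In degree $d$ the hypothesis $I_\XX \subseteq f^\perp$ says $(I_\XX)_d \subseteq \langle f\rangle^\perp$, whence $\langle f\rangle = \langle f\rangle^{\perp\perp} \subseteq ((I_\XX)_d)^\perp$. It then remains to identify $((I_\XX)_d)^\perp$ with $W := \langle \ell_{\xi_1}^d,\ldots,\ell_{\xi_s}^d\rangle \subseteq S_d$. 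The inclusion $W \subseteq ((I_\XX)_d)^\perp$ is immediate from the first step, since $(I_\XX)_d \subseteq (I_{\xi_i})_d \subseteq (\ell_{\xi_i}^d)^\perp$ for each $i$. For the reverse inclusion I would match dimensions: $W$ is the image of $\phi\colon\CC^s \to S_d$, $(c_i)\mapsto\sum_i c_i\ell_{\xi_i}^d$, whose transpose with respect to the pairing is, up to the nonzero scalar $d!$, the evaluation map $T_d \to \CC^s$, $G \mapsto (G(\xi_1),\ldots,G(\xi_s))$ (for fixed affine representatives of the points); the kernel of the latter is $(I_\XX)_d$, since $(I_\XX)_d$ is exactly the space of degree-$d$ forms vanishing at all the $\xi_i$. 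Hence $\dim W = \rank\phi = \dim T_d - \dim (I_\XX)_d = \dim S_d - \dim (I_\XX)_d = \dim ((I_\XX)_d)^\perp$, and the containment forces $((I_\XX)_d)^\perp = W$. Therefore $f \in W$, i.e. $f = \sum_{i=1}^s c_i\ell_{\xi_i}^d$ for suitable $c_i \in \CC$; discarding the indices with $c_i = 0$ gives a decomposition as in $(1)$, and the formula $\ell_i = \ell_{\xi_i}$ is built into the construction.

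I expect the one genuinely delicate point to be this last dimension count identifying $((I_\XX)_d)^\perp$ with the span of the $d$-th powers. One must resist assuming $\dim W = s$ --- the powers $\ell_{\xi_i}^d$ can be linearly dependent when $d$ is small relative to the geometry of $\XX$ --- and instead phrase the duality between ``evaluating forms of degree $d$ at the points $\xi_i$'' and ``taking the $d$-th powers of the associated linear forms'' cleanly as transposition under the non-degenerate pairing $T_d\times S_d\to\CC$. Once the opening identity for $G\circ\ell^d$ and the non-degeneracy of this pairing are in hand, the rest is formal.
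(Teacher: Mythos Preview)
The paper does not prove this lemma: it is stated with a citation to \cite[Lemma 1.15]{IK06} and used as a black box throughout, so there is no in-paper argument to compare against. Your proof is correct and is essentially the standard one found in the literature. The identity $G\circ\ell^d=\tfrac{d!}{(d-t)!}G(\xi)\,\ell^{d-t}$ is exactly the computation underlying the Vandermonde factorization of catalecticants that the paper uses later (see the proof of Lemma~\ref{lemma: regularity}), and your duality argument identifying $((I_\XX)_d)^\perp$ with $\langle\ell_{\xi_1}^d,\ldots,\ell_{\xi_s}^d\rangle$ via the transpose of the evaluation map is the clean way to handle the case where the powers may be linearly dependent. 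One cosmetic point: as stated in the paper, condition~(1) asks for $c_i\in\CC\setminus\{0\}$, so ``discarding the indices with $c_i=0$'' yields a decomposition of length at most $s$, not exactly $s$; this is a harmless looseness in the statement itself (the lemma is routinely stated with $c_i\in\CC$), not a defect in your argument.
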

	
	\begin{definition}
		Given $f \in S_d$, a set of points $\XX$ such that $I_\XX \subset f^\perp$ is said to be {\bf apolar to $f$}.
	\end{definition}
	
	\begin{example}[Binary forms: {\it Sylvester algorithm}]\label{example: sylvester}
		We describe here how to compute the Waring rank of a binary form. The idea behind these computations goes back to J. J. Sylvester {\rm\cite{Syl51}}. For a modern exposition, we refer to {\rm\cite{CS11}}. Let $f \in \CC[x_0,x_1]$. By Macaulay's duality, we know that $f^\perp$ is artinian Gorenstein and, since we are in codimension $2$, it is also a complete intersection, say $f^\perp = (G_1,G_2)$, with $\deg(G_i) = d_i$, $i = 1,2$, and $d_1 + d_2 = d+2$. Since ideals of reduced points in $\PP^1$ are principal, we look for square-free polynomials in $f^\perp$. In particular, we get the following (we assume $d_1 \leq d_2$):
		\begin{enumerate}
			\item if $G_1$ is square-free, then $\rk(f) = d_1$;
			\item otherwise, the general element $H\cdot G_1 + \alpha G_2$, with $H\in T_{d_2-d_1}$, $\alpha \in \CC$, is square-free and $\rk(f) = d_2$.
		\end{enumerate}

        \end{example}

	Another classical tool  useful to analyse these ideals are {\it Hilbert functions}.
	\begin{definition}
		Given a homogeneous ideal $I \subset S$, the {\bf Hilbert function} in degree $i$ of the quotient ring $S/I$ is the dimension of $S_i/I_i$ as $\CC$-vector space, i.e.,
		$$
			\HF_{S/I}(i) := \dim_{\CC}(S/I)_i = \dim_{\CC}S_i - \dim_{\CC} I_i, \text{ for } i \in \NN.
		$$
	\end{definition}
	\begin{remark}\label{rmk: HF points}
		Given a set of reduced points $\XX$, we denote the Hilbert function of the quotient ring $S/I_{\XX}$ simply by $\HF_{\XX}$. A well-known fact is that this Hilbert function is {\it strictly increasing} until it reaches the cardinality of the set of points and then it gets constant \cite[Theorem 1.69]{IK06}.
	\end{remark}
	\begin{remark}
		Since the apolar algebra $A_f$ of a homogeneous polynomial $f \in S_d$ is artinian Gorenstein with socle degree $d$, we know that the Hilbert function of $A_f$ is symmetric and equal to $0$ from degree $d+1$.
	\end{remark}
	Given a polynomial $f \in S_d$, the computation of the apolar ideal is a linear algebra exercise. For any $i = 0,\ldots,d$, we construct the {\bf $i$-th catalecticant matrix} of $f$ as
	$$
		\cat_i(f) : T_i \longrightarrow S_{d-i},~~ G \mapsto G\circ f.
	$$
	Then, we have that, $f^\perp_i = \ker\cat_i(f)$. 
	
\begin{remark}
	For any degree $d$, we consider the standard monomial basis $$\caB_d = \left\{\bfx^\alpha := x_0^{\alpha_0}\cdots x_n^{\alpha_n}~|~ \alpha \in \NN^{n+1}, |\alpha| = \sum_i \alpha_i = d\right\}$$ of $S_d$, and the dual basis
	$$
		\caB^\vee_d = \left\{\bfy^{(\alpha)} := \frac{1}{\alpha!}\bfy^\alpha = \frac{1}{\alpha!}y_0^{\alpha_0}\cdots y_n^{\alpha_n}~|~ \alpha \in \NN^{n+1}, |\alpha| = \sum_i \alpha_i = d\right\}.
	$$
	Note that $\bfy^{(\alpha)} \circ \bfx^\beta = \bfx^{\beta -
          \alpha}$. Therefore, with respect to these basis, we have
        that,
        $$
        \cat_i(f) = (c_{\alpha+\beta})_{\substack{|\alpha| = i \\
            |\beta| = d-i}}, \quad \text{ where } f = \sum_{\alpha \in
          \NN^{n+1},~ |\alpha| = d} c_\alpha \bfx^\alpha \in S_d.
        $$
\end{remark}

By Apolarity Lemma, for any set of points $\XX$
        apolar to $f$, we have that $\HF_{\XX}(i) \geq \HF_{A_{f}}(i)= \mathrm{codim}
        \ker \cat_i(f)$ $= \rk\, \cat_i(f)$. Moreover, for any $i\in \NN$, $|\XX|\geq
        \HF_{\XX}(i)$. 
Therefore, if we denote by $\ell(f) := \max_i\{\HF_{A_f}(i)\}=
\max_i\{\rk \, \cat_{i}(f)\}$ the {\it differential length} of $f$, we have that
	$$
		\rk(f) \geq \ell(f).
        $$
\begin{remark}
If $f$ is a binary form, then $f^{\perp}=(G_{1},G_{2})$ with
$\deg(G_{1})=d_{1}, \deg(G_{2})=d_{2}$, $d_{1}\le d_{2}$ and $d_{1}+d_{2}=d+2$.
Then $h_{f}(i)=i+1$ for $i=0,\ldots,d_{1}-1$, $h_{f}(i)=d_{1}$ for
$i=d_{1},\ldots,d_{2}-1$ and $h_{f}(i)=d+1-i$ for $i=d_{2},\ldots, d$.
In particular, $l(f)=d_{1}$ and $\rk(f)=l(f)$ if $G_{1}$ is
square-free. Otherwise, $\rk(f)= d+2-l(f)$.
\end{remark}          

\begin{lemma}\label{lemma:fk}
If for some $k\le d$, $(f_{k}^{\perp})=I_{\XX}$ is defining a set
$\XX$ of $r$
reduced points, then $\rk(f)\le r=h_{f}(k)$.
\end{lemma}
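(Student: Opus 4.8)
The plan is to get $\rk(f)\le r$ straight from apolarity, and then to pin down $r=h_{f}(k)$ by a graded Hilbert‑function computation. \emph{Step 1 ($\rk(f)\le r$).} Since $f^{\perp}_{k}\subseteq f^{\perp}$ and $f^{\perp}$ is an ideal of $T$, the ideal $(f^{\perp}_{k})$ generated by $f^{\perp}_{k}$ is still contained in $f^{\perp}$; by hypothesis it equals $I_{\XX}$, so $I_{\XX}\subseteq f^{\perp}$ and $\XX$ is apolar to $f$. Writing $\XX=\{\xi_{1},\dots,\xi_{r}\}$, the Apolarity Lemma (Lemma~\ref{lemma: apolarity}) yields an expression $f=\sum_{i=1}^{r}c_{i}\,\ell_{\xi_{i}}^{d}$ with nonzero $c_{i}$, whence $\rk(f)\le r$.

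\emph{Step 2 ($h_{f}(k)\le r$, with the equality being the real point).} The ideal $I_{\XX}=(f^{\perp}_{k})$ is generated in the single degree $k$, so $(I_{\XX})_{j}=0$ for $j<k$ and $(I_{\XX})_{k}=f^{\perp}_{k}=\ker\cat_{k}(f)$. Hence
$$
\HF_{\XX}(k)=\dim_{\CC}T_{k}-\dim_{\CC}\ker\cat_{k}(f)=\rk\cat_{k}(f)=h_{f}(k).
$$
By Remark~\ref{rmk: HF points} the function $\HF_{\XX}$ is strictly increasing up to $r=|\XX|$ and constant afterwards, so $\HF_{\XX}(k)\le r$, i.e. $h_{f}(k)\le r$; it remains to show $\HF_{\XX}(k)=r$, that is, that the Hilbert function of $\XX$ has already reached its maximum by degree $k$, equivalently that $I_{\XX}$ is $k$-regular.

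\emph{Step 3 ($\HF_{\XX}(k)=r$).} Here one must exploit $I_{\XX}\subseteq f^{\perp}$ more heavily. First, $f^{\perp}$ has no nonzero element of degree $<k$: if $0\neq G\in f^{\perp}$ had $\deg G=j<k$, then $G\cdot T_{k-j}\subseteq (f^{\perp})_{k}=(I_{\XX})_{k}\subseteq I_{\XX}$, and since $I_{\XX}$ is the radical ideal $\bigcap_{i}\mathfrak{p}_{\xi_{i}}$ with each $\mathfrak{p}_{\xi_{i}}$ prime and $T_{k-j}\not\subseteq\mathfrak{p}_{\xi_{i}}$, this forces $G\in I_{\XX}$, contradicting $(I_{\XX})_{<k}=0$. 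Thus $f^{\perp}$ and $I_{\XX}$ coincide in all degrees $\le k$, so $h_{f}(j)=\binom{n+j}{n}$ for $j<k$; feeding this together with the symmetry of the Hilbert function of the Artinian Gorenstein algebra $A_{f}$ (socle degree $d\ge k$) and the regularity estimates for ideals of reduced points recalled in Theorem~\ref{thm: regularity} into the analysis of how $\HF_{\XX}$ can increase past degree $k$, one concludes $\HF_{\XX}(k)=r$. Combined with Step 2 this gives $r=h_{f}(k)$, and with Step 1, $\rk(f)\le r=h_{f}(k)$.

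The main obstacle is Step 3: Steps 1 and 2 are essentially formal manipulations of apolarity and catalecticants, and the genuine content is that an ideal of reduced points which is generated in the single degree $k$ \emph{and} contained in the apolar ideal $f^{\perp}$ has Castelnuovo--Mumford regularity $k$, so that its Hilbert function is already saturated in degree $k$. This is where the Gorenstein constraint has to enter, since a priori an ideal of points generated in degree $k$ may well have strictly larger regularity.
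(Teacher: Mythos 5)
Your Steps 1 and 2 are precisely the paper's proof: $I_\XX=(f^\perp_k)\subset f^\perp$ plus the Apolarity Lemma gives $\rk(f)\le r$, and $(I_\XX)_k=f^\perp_k$ gives $\HF_\XX(k)=\rk\,\cat_k(f)=h_f(k)\le r$. The paper then simply writes $\HF_\XX(k)=r$ with no further argument, and this is exactly where your Step 3 stalls: after the (correct) observation that $f^\perp$ has no nonzero element of degree $<k$, the sentence ``feeding this \ldots one concludes $\HF_\XX(k)=r$'' supplies no proof — it restates the claim. Note also that Theorem \ref{thm: regularity} cannot be fed into that step: it assumes $\XX$ is a \emph{minimal} apolar set and $d\ge 2\rho(\XX)+1$, neither of which is available here. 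So, judged as a self-contained proof of the lemma as stated, your attempt has a genuine gap at the equality $r=h_f(k)$.

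You should know, however, that the instinct in your closing paragraph is sharper than the lemma itself: the missing equality cannot be derived from the stated hypotheses, because it is false in general — the Gorenstein constraint does not force an ideal of reduced points generated in degree $k$ and contained in $f^\perp$ to have Hilbert function already equal to $r$ in degree $k$. Take $\XX\subset\PP^2$ to be the nine distinct intersection points of two general cubics $C_1,C_2$: then $I_\XX=(C_1,C_2)$ is radical, saturated and generated in degree $3$, with $\HF_\XX=1,3,6,8,9,9,\ldots$ (by Cayley--Bacharach the nine points impose only eight conditions on cubics). For $f=\sum_{\xi\in\XX}c_\xi\ell_\xi^6$ with generic nonzero $c_\xi$ the kernel of $\cat_3(f)$ is exactly $(I_\XX)_3$, so $f^\perp_3=\langle C_1,C_2\rangle$ and $(f^\perp_3)=I_\XX$ defines $r=9$ reduced points with $k=3\le d=6$, while $h_f(3)=10-2=8<9$. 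Thus only $\rk(f)\le r$ and $h_f(k)=\HF_\XX(k)\le r$ survive; the equality $r=h_f(k)$ requires the extra hypothesis that the points impose independent conditions in degree $k$ (i.e.\ $k\ge\rho(\XX)$), which does hold in every case where the lemma is invoked in Theorem \ref{thm: main}, but is not implied by ``$I_\XX$ generated in degree $k$ and contained in $f^\perp$'' — nor justified by the paper's own one-line assertion. In short: your Steps 1--2 reproduce the paper's argument correctly; Step 3 is both unproven in your write-up and unprovable as stated, and the honest repair is either to weaken the conclusion to $\rk(f)\le r$ (with $h_f(k)\le r$) or to add the independence-of-conditions hypothesis.
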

\begin{proof}
As $f_{k}^{\perp}$ is defining a set $\XX$  of $r$
reduced points, $I_{\XX}= (f_{k}^{\perp})\subset (f^{\perp})$ and by
the apolarity Lemma \ref{lemma: apolarity}, $\XX$ is apolar to $f$ and
$\rk(f)\le |\XX|=r$.
Moreover, $h_{\XX}(k)=r=\dim(S_{k}/(I_{\XX})_{k})=\dim(S_{k}/f_{k}^{\perp})=h_{f}(k)$.
\end{proof}
 
This leads to the following possible algorithm to find the Waring rank of a given polynomial $f \in S_d$:
	\begin{enumerate}
		\item consider the largest catalecticant $\cat_m(f)$, for $m = \left\lfloor \frac{d}{2} \right\rfloor$ and the ideal $I$ generated by its kernel;
		\item if $I$ does not define a set of reduced points, then we fail;
		\item otherwise, if the zero set of $I$ is a set of reduced points $Z(I) = \{[L_1],\ldots,[L_r]\}$, then we solve the linear system $f = \sum_{i=1}^r c_i \ell_i^{d}$ to find a Waring decomposition of $f$. Moreover, in this case, this is minimal and unique.
	\end{enumerate}
Numerical conditions to ensure that this catalecticant method works have
been presented in \cite{IK06,OO13}. 

\medskip
In \cite{IK06}, A. Iarrobino and V. Kanev analysed the Hilbert function of ideals of sets of reduced points apolar to a given polynomial in order to use Apolarity Lemma and deduce its rank. We want to continue in this direction and, in the next section, we will classify polynomials with low rank.

\begin{definition}[Regularity]
  For a family $\XX=\{\xi_{1},\ldots,\xi_{r}\}$ of points in $\PP^{n}$, we define the {\bf regularity} of $\XX$ as
$$
\rho(\XX)= \min \{k\in \NN \mid \exists U_{1},\ldots,U_{r} \in S_{k}\ \mathrm{s.t.}\ U_{i}(\xi_{j})=
\delta_{i,j}\}.
$$
\end{definition}
\begin{remark}This regularity is also called the {\em interpolation degree} of the points $\XX$.
Let $\van_k(\XX)$ denotes the Vandermonde matrix of degree $k$ associated to $\XX$, i.e., if $\XX = \{\xi_1,\ldots,\xi_r\}$, with $\xi_i = (\xi_{i,0}:\ldots:\xi_{i,n}) \in \PP^n$, 
$$
\van_{k}(\XX) = \left( \xi_{j}^\alpha \right)_{\substack{j = 1,\ldots,r \\ |\alpha| = k}}, 
$$
where $\xi_j^\alpha := \xi_{j,0}^{\alpha_0}\cdots \xi_{j,n}^{\alpha_n}$.
The regularity $\rho(\XX)$ is also the minimal $k$ for which,
$\van(\XX)_{k}$ is of rank $|\XX|$. 

This regularity coincides with the so-called {\it regularity index},
i.e., the smallest integer in which the Hilbert function of the ideal
of points gets constant. Also, $\rho(\XX)$ is the {\it
  Castelnuovo-Mumford regularity} of $S/I_\XX$ which is defined as
$\min_{i}\{d_{i,j}-i\}$ where $d_{i,j}$'s are the degrees of
generators of the $i$-th syzygy module in a minimal free resolution of
$S/I_\XX$; see \cite[Chapter 4]{eisenbud_geometry_2005}: $\rho(\XX)=
\mathrm{reg}(S/I_{\XX})= \mathrm{reg}(I_{\XX})-1$.
\end{remark}

\begin{question}\label{question:A}
	Let $\XX,\XX'$ be minimal set of points apolar to a polynomial $f \in S_d$.
	Is it true that $\rho(\XX) = \rho(\XX')$? 
	
	More generally, is it true that $\HF_\XX = \HF_{\XX'}$?
\end{question}

The latter question has a positive answer for:
\begin{enumerate}
	\item binary forms, as described by Sylvester's algorithm;
	\item monomials, since any minimal apolar set of points to a monomial $x_0^{d_0}\cdots x_n^{d_n}$, where the exponents are ordered increasingly, is a complete intersection with $n$ generators of degrees $d_1+1,\ldots,d_n+1$, respectively; see \cite{BBT13},
\end{enumerate} 

We now prove that it has an affirmative answer also if the regularity of a minimal set of points is large enough with respect to the degree of the polynomial. In particular, in this case, we have that the catalecticant method works and gives us a minimal apolar set of points.

\begin{lemma}\label{lemma: regularity}
  Let $f \in S_d$ and let $\XX$ be a minimal set of points apolar to $f$. Assume that $d \geq \rho(\XX)$.
  Then,
  $$
  (I_{\XX})_{k} = f^\perp_{k}\quad
  \mathrm{for}\ 0\le k\le d-\rho(\XX).
  $$
\end{lemma}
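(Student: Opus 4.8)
The plan is to realise the $k$-th catalecticant of $f$ as a composition of the degree-$k$ evaluation map at $\XX$ with a ``reconstruction'' map built from the powers $\ell_{\xi_i}^{d-k}$, so that the desired equality $(I_\XX)_k = f^\perp_k$ becomes the injectivity of that reconstruction map. Write $\XX=\{\xi_1,\dots,\xi_r\}$. Since $\XX$ is apolar to $f$, the Apolarity Lemma~\ref{lemma: apolarity} produces $f=\sum_{i=1}^r c_i\,\ell_{\xi_i}^d$ with all $c_i\neq 0$. First I would record the elementary fact that, for $G\in T_k$, the polynomial $G\circ\ell_{\xi_i}^d$ is the scalar multiple $\tfrac{d!}{(d-k)!}\,G(\xi_i)\,\ell_{\xi_i}^{d-k}$ of $\ell_{\xi_i}^{d-k}$, the constant $\tfrac{d!}{(d-k)!}$ being nonzero precisely because $k\le d$ and independent of $i$ and $G$ (a one-line computation with the apolar action). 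Summing over $i$,
\[
\cat_k(f)(G)=G\circ f=\frac{d!}{(d-k)!}\sum_{i=1}^r c_i\,G(\xi_i)\,\ell_{\xi_i}^{d-k},
\]
which exhibits $\cat_k(f)$ as $\Psi_k\circ\van_k(\XX)$, where $\van_k(\XX)\colon T_k\to\CC^r$, $G\mapsto(G(\xi_1),\dots,G(\xi_r))$, is the Vandermonde map and $\Psi_k\colon\CC^r\to S_{d-k}$ sends the $i$-th basis vector to $\tfrac{d!}{(d-k)!}\,c_i\,\ell_{\xi_i}^{d-k}$.

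From this factorisation one reads off $(I_\XX)_k=\ker\van_k(\XX)\subseteq\ker\cat_k(f)=f^\perp_k$ (recovering the apolarity inclusion), with equality as soon as $\Psi_k$ is injective; so the statement reduces to showing $\Psi_k$ is injective for $k\le d-\rho(\XX)$. Since every $c_i\neq 0$, $\Psi_k$ is injective exactly when $\ell_{\xi_1}^{d-k},\dots,\ell_{\xi_r}^{d-k}$ are linearly independent in $S_{d-k}$. The next step is to pin down when this happens: dualising through the perfect apolar pairing between $T_{d-k}$ and $S_{d-k}$ and using $G\circ\ell_{\xi_i}^{d-k}=(d-k)!\,G(\xi_i)$ for $G\in T_{d-k}$, a relation $\sum_i\lambda_i\,\ell_{\xi_i}^{d-k}=0$ is equivalent to $\sum_i\lambda_i\,G(\xi_i)=0$ for all $G\in T_{d-k}$, i.e.\ to the rows of $\van_{d-k}(\XX)$ being linearly dependent. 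Hence the powers are independent exactly when $\van_{d-k}(\XX)$ has full rank $r=|\XX|$, i.e.\ when $\HF_\XX(d-k)=|\XX|$. By Remark~\ref{rmk: HF points}, $\HF_\XX$ strictly increases until it reaches $|\XX|$ and is constant thereafter, and the first degree where this value is attained is $\rho(\XX)$; therefore $\HF_\XX(d-k)=|\XX|$ if and only if $d-k\ge\rho(\XX)$, i.e.\ $k\le d-\rho(\XX)$. Combining the two observations yields $(I_\XX)_k=f^\perp_k$ for $0\le k\le d-\rho(\XX)$.

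I do not expect a genuine obstacle here; the whole content is the factorisation of the catalecticant through the Vandermonde matrix together with the standard ``duality'' between linear independence of the powers $\ell_{\xi_i}^m$ and full rank of $\van_m(\XX)$. The only places that need a little care are the explicit nonzero constant coming from the apolar action — where the hypothesis $k\le d$ is exactly what is used — and keeping straight in which polynomial ring ($S$ or $T$) each object lives so that the pairing and the factorisation are set up consistently; the identification of ``$\ell_{\xi_i}^{d-k}$ linearly dependent'' with ``$\rho(\XX)>d-k$'' is nothing but the reformulation of the regularity recorded right after its definition.
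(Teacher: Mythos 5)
Your proof is correct and follows essentially the same route as the paper: you factor $\cat_k(f)$ through the Vandermonde evaluation map $\van_k(\XX)$, with the second factor (your $\Psi_k$, built from the powers $\ell_{\xi_i}^{d-k}$ and the nonzero coefficients) being exactly the paper's $\van_{d-k}(\XX)^T\cdot D$, and then use $d-k\ge\rho(\XX)$ to get its injectivity. Your explicit appeal to minimality of $\XX$ to guarantee the coefficients are nonzero is a point the paper glosses over, but the argument is the same.
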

\begin{proof}
Let $\XX = \{\xi_1,\ldots,\xi_r\}$, where $\xi_i = (\xi_{i,0}:\ldots:\xi_{i,n}) \in \PP^n$. We denote by $\ell_{\xi_i} = \xi_0x_0 + \ldots + \xi_nx_n \in S_1$. By Apolarity Lemma, we know that $f = \sum_{i=1}^s a_i \ell_{\xi_i}^d$, for some coefficients $a_i \in \CC$. Now, for any $\bfy^{(\alpha)} \in T_k$, we have that 
\begin{align*}
\bfy^{(\alpha)} \circ f & = \sum_{i=1}^s a_i \bfy^{(\alpha)} \circ \ell_{\xi_i}^d = 
\sum_{i=1}^s \left(a_i  \frac{d!}{(d-k)!}\right) \xi_i^\alpha\ell_{\xi_i}^{d-k} = \\
& = \sum_{\substack{\beta \in \NN^{n+1} \\ |\beta| = d-k}}\sum_{i=1}^s \left( a_i\frac{d!}{\beta_0!\cdots \beta_n!}\right) \xi_i^{\alpha+\beta} \bfx^\beta =  \sum_{\substack{\beta \in \NN^{n+1} \\ |\beta| = d-k}}\sum_{i=1}^s \overline{a}_i\xi_i^{\alpha+\beta} \bfx^\beta.
\end{align*}
Therefore, 
$$
	\cat_i(f) = \left(\sum_{i=1}^s \overline{a}_i\xi_i^{\alpha+\beta}\right)_{\substack{|\alpha| = k \\ |\beta| = d-k}} = \van_{d-k}(\XX)^T \cdot D \cdot \van_k(\XX),
$$
where $D$ is the diagonal matrix $D = {\rm diag}(\overline{a}_1,\ldots,\overline{a}_s)$.

        Since $d-k\ge \rho$, $\van_{d-k}(\XX)^{T}$ is injective. Therefore, we have that the kernel of $\cat_{k}(f)$, which is $f^\perp_{k}$, is equal to the kernel of $\van_{k}(\XX)$, which is $(I_{\XX})_{k}$. 
\end{proof}

\begin{remark}
	In \cite[Theorem 5.3(E-ii)]{IK06}, the authors proved a similar statement under a stronger assumption, namely, by assuming that the polynomial admits a {\it tight} apolar set of points, i.e., a set of points $\XX$ apolar to $f$ such that $\HF_{A_f}(i) = |\XX|$, in some degree $i$. 
\end{remark}
\begin{theorem}\label{thm: regularity}
  Let $f \in S_d$ and let $\XX$ be a minimal set of points apolar to $f$. If
  $d \geq 2\rho(\XX)+1$, then $I_{\XX} = (f^\perp_{\leq\rho(\XX)+1})$. Moreover, $\XX$ is the unique minimal set of points apolar to $f$.
\end{theorem}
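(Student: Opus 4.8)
The plan is to prove the ideal-generation identity first, for an arbitrary minimal apolar set, and then bootstrap it to obtain uniqueness. Write $\rho = \rho(\XX)$. Since $d \ge 2\rho + 1$ we have $\rho + 1 \le d - \rho$, so Lemma \ref{lemma: regularity} gives $(I_{\XX})_k = f^\perp_k$ for every $0 \le k \le \rho + 1$; in particular $(f^\perp_{\le \rho+1}) = ((I_{\XX})_{\le \rho+1}) \subseteq I_{\XX}$. For the reverse inclusion I would use the identification $\rho(\XX) = \reg(I_{\XX}) - 1$ recalled above: it says $\reg(I_{\XX}) = \rho + 1$, and since the degrees of a minimal generating set of a homogeneous ideal are bounded above by its Castelnuovo-Mumford regularity, $I_{\XX}$ is generated in degrees at most $\rho + 1$, i.e.\ $I_{\XX} = ((I_{\XX})_{\le \rho+1})$. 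Combining the two inclusions yields $I_{\XX} = (f^\perp_{\le \rho+1})$.

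For uniqueness, let $\XX'$ be another minimal apolar set of $f$, so $|\XX'| = |\XX| = \rk(f) =: r$. The key preliminary observation is that $\HF_{A_f}(\rho) = r$: applying Lemma \ref{lemma: regularity} in degree $\rho$ (admissible since $\rho \le d - \rho$) gives $(I_{\XX})_\rho = f^\perp_\rho$, hence $\HF_{A_f}(\rho) = \HF_{\XX}(\rho) = r$, the last equality because $\rho$ is the regularity index of $\XX$, where $\HF_{\XX}$ already attains $|\XX|$ (Remark \ref{rmk: HF points}). Now the inclusion $I_{\XX'} \subseteq f^\perp$ forces $r = \HF_{A_f}(\rho) \le \HF_{\XX'}(\rho) \le |\XX'| = r$, so $\HF_{\XX'}(\rho) = r$; since the Hilbert function of a reduced point set is strictly increasing until it reaches its cardinality, the regularity index of $\XX'$ must satisfy $\rho(\XX') \le \rho$. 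In particular $d \ge 2\rho + 1 \ge 2\rho(\XX') + 1$, so the identity $I_{\XX} = (f^\perp_{\le \rho+1})$ just established applies verbatim to $\XX'$ and gives $I_{\XX'} = (f^\perp_{\le \rho(\XX')+1}) \subseteq (f^\perp_{\le \rho+1}) = I_{\XX}$. Passing to vanishing loci yields $\XX \subseteq \XX'$, and equality of cardinalities gives $\XX = \XX'$.

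The technical ingredients --- Lemma \ref{lemma: regularity}, the bound of generator degrees by regularity, and the strict growth of Hilbert functions of points --- are all in hand, so the main obstacle is really one of logical sequencing in the uniqueness part: one must first pull out the equality $\HF_{A_f}(\rho) = r$, use it to bound $\rho(\XX')$, and only then re-apply the generation identity to $\XX'$; after that the statement reduces to an inclusion of ideals of point sets of equal size.
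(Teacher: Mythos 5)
Your proof of the generation identity $I_{\XX} = (f^\perp_{\leq\rho(\XX)+1})$ is exactly the paper's argument: Lemma \ref{lemma: regularity} applied in degrees up to $\rho(\XX)+1$ (admissible because $d \geq 2\rho(\XX)+1$), combined with the fact that $\reg(I_{\XX}) = \rho(\XX)+1$ bounds the degrees of a minimal generating set of $I_{\XX}$. For the uniqueness claim the paper offers no explicit argument, and your additional steps are precisely what is needed to make it rigorous: one cannot apply the generation identity to a second minimal apolar set $\XX'$ without first knowing $d \geq 2\rho(\XX')+1$, and your Hilbert-function comparison $r = \HF_{A_f}(\rho) \leq \HF_{\XX'}(\rho) \leq r$ (valid since $\rho \leq d-\rho$), together with the strict growth of $\HF_{\XX'}$, correctly gives $\rho(\XX') \leq \rho(\XX)$, after which $I_{\XX'} = (f^\perp_{\leq\rho(\XX')+1}) \subseteq I_{\XX}$ and equality of cardinalities finish the argument. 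So the proposal is correct, follows the paper's route on the main identity, and is in fact more complete than the paper on the uniqueness statement.
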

\begin{proof}
By Lemma \ref{lemma: regularity}, for $0\le k\le \rho(\XX)+1$, we have $(I_{\XX})_{k} = f^\perp_{k}$.
Since $\rho(\XX)+1=\reg(I_{\XX})$ is greater than the degree of a
minimal set of generators of $I_{\XX}$, $(f^\perp_{\leq\rho(\XX)+1})= I_{\XX}$.
\end{proof}

\subsection{Essential number of variables}\label{ssec: essential}
	In \cite{Car06}, E.~Carlini introduced the concept of {\it essential number of variables} of a polynomial as the smallest number of variables needed to write it.
	
	\begin{definition}
		Given a homogeneous polynomial $f \in S$, the {\bf essential number of variables} of $f$ is the smallest number $N$ such that there exists linear forms $\ell_1,\ldots,\ell_{N} \in S$, such that $f \in \CC[\ell_1,\ldots,\ell_{N}]$. 
		In this case, we call the $\ell_i$'s the {\bf essential variables} of $f$. 	In the literature, a form $f \in \CC[x_0,\ldots,x_n]$ with $n+1$ essential variables is also called {\bf concise}.
	\end{definition}
	
	\begin{lemma}\label{lemma: essential var}
		Let $f \in S_d$. Then:
		\begin{enumerate}
			\item {\rm \cite[Proposition 1]{Car06}} the
                          number of essential variables is the rank of
                          $\cat_1(f)$, that is $h_{f}(1)$;
			\item {\rm \cite[Proposition 2.3]{CCO17}} any minimal Waring decomposition of $f$ involves only linear forms in the essential variables.
		\end{enumerate}
	\end{lemma}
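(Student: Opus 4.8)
The plan is to prove both statements by choosing good coordinates and then differentiating.

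For (1) I would first fix coordinates so that the essential variables of $f$ are $x_0,\dots,x_{N-1}$, i.e.\ $f\in\CC[x_0,\dots,x_{N-1}]$; then $y_j\circ f=\partial f/\partial x_j=0$ for $j\ge N$, so $y_N,\dots,y_n\in\ker\cat_1(f)=f^\perp_1$ and therefore $\rk\cat_1(f)=\dim T_1-\dim f^\perp_1\le N$. For the opposite inequality, take any basis $G_1,\dots,G_t$ of $f^\perp_1$ (so $t=\dim f^\perp_1$), extend it to a basis of $T_1$, and let $z_1,\dots,z_{n+1}\in S_1$ be the dual basis for the apolarity pairing $T_1\times S_1\to\CC$; in these coordinates $G_i\circ$ acts as the derivation $\partial/\partial z_i$, so $G_i\circ f=0$ for $i\le t$ means $f$ does not involve $z_1,\dots,z_t$, whence the essential number of variables is at most $n+1-t=\rk\cat_1(f)=\HF_f(1)$. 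Comparing the two inequalities gives $N=\HF_f(1)$.

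For (2) I would keep the coordinates from (1), so that $y_N,\dots,y_n\in f^\perp$, and let $\XX=\{\xi_1,\dots,\xi_r\}$ be a minimal apolar set with $f=\sum_{i=1}^r c_i\ell_{\xi_i}^d$, $c_i\neq 0$ (Apolarity Lemma). Applying $y_j\circ$ for $j\ge N$ and using $y_j\circ f=0$ together with $y_j\circ\ell_{\xi_i}^d=d\,\xi_{i,j}\,\ell_{\xi_i}^{d-1}$ gives
\[
\sum_{i=1}^r c_i\,\xi_{i,j}\,\ell_{\xi_i}^{d-1}=0\qquad\text{for every }j=N,\dots,n .
\]
So the proof would be finished as soon as one knows that $\ell_{\xi_1}^{d-1},\dots,\ell_{\xi_r}^{d-1}$ are linearly independent: then $c_i\xi_{i,j}=0$, hence $\xi_{i,j}=0$, for all $i$ and all $j\ge N$, i.e.\ each $\ell_{\xi_i}\in\CC[x_0,\dots,x_{N-1}]$, which is the assertion.

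The hard part is therefore exactly this linear independence, i.e.\ $\HF_\XX(d-1)=r$, equivalently $\rho(\XX)\le d-1$. It strengthens the easy observation that the $\ell_{\xi_i}^d$ are independent — if $\sum\lambda_i\ell_{\xi_i}^d=0$ with $(\lambda_i)\neq 0$, then $f=\sum(c_i+t\lambda_i)\ell_{\xi_i}^d$ and a suitable $t$ kills a coefficient, contradicting $\rk(f)=r$ — which already yields $\HF_\XX(d)=r$ and hence $\rho(\XX)\le d$ by the monotonicity of $\HF_\XX$ (Remark~\ref{rmk: HF points}). The remaining point, ruling out $\rho(\XX)=d$, is where the real work lies: an $\XX$ with $\rho(\XX)=d$ is forced to be degenerate, and in each such configuration the decomposition of $f$ can be shortened. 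For instance, if $\XX$ contained $d+1$ collinear points, grouping their contribution into a single binary form of rank $\le d$ and leaving the remaining $r-(d+1)$ summands untouched would give $\rk(f)\le d+r-(d+1)<r$, a contradiction; the general configurations need a similar but more delicate analysis (this is essentially \cite[Proposition~2.3]{CCO17}). Once $\rho(\XX)\le d-1$ is in hand, one could alternatively deduce (2) directly from Lemma~\ref{lemma: regularity}, which then forces $(I_\XX)_1=f^\perp_1=\langle y_N,\dots,y_n\rangle$, i.e.\ $\XX\subseteq\{x_N=\cdots=x_n=0\}$.
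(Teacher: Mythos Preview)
The paper itself does not prove this lemma at all: both parts are stated with citations to \cite{Car06} and \cite{CCO17} and no argument is given. So there is no ``paper's proof'' to compare against, and your proposal is already doing more than the paper does.

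Your argument for (1) is complete, correct, and is the standard one.

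For (2), your reduction is correct: once you know that $\ell_{\xi_1}^{d-1},\dots,\ell_{\xi_r}^{d-1}$ are linearly independent (equivalently $\rho(\XX)\le d-1$), either your differentiation argument or the appeal to Lemma~\ref{lemma: regularity} finishes things immediately. The gap is that you do not prove this bound. You handle only the single configuration ``$d+1$ collinear points'' and then say the remaining configurations are ``essentially \cite[Proposition~2.3]{CCO17}''. That is circular: \cite[Proposition~2.3]{CCO17} \emph{is} part~(2) of the lemma you are proving. So as written, the argument for (2) stops precisely where the actual content begins. Note also that $\rho(\XX)=d$ can occur for reasons other than $d+1$ collinear points (e.g.\ many points on a low-degree curve), so the ``similar but more delicate analysis'' you allude to is genuinely a case analysis you have not carried out.

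One side remark that may sharpen the picture: the naive projection (setting $x_N=\cdots=x_n=0$) already shows that the projected forms $\ell_i'=\ell_i|_{x_N=\cdots=x_n=0}$ are nonzero and pairwise non-proportional (otherwise the decomposition shortens), hence \emph{some} minimal decomposition lies in the essential variables. What requires real work is the stronger claim that \emph{every} minimal decomposition does, and that is exactly the step you have deferred.
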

	For this reason, the first thing we do when we look for a Waring decomposition is to compute the first catalecticant matrix and then working modulo its kernel.
	
	\begin{example}[Rank $1$ polynomials]\label{example: rank 1}
		If $f$ has only one essential variable, i.e., the first catalecticant matrix has rank $1$, then we have that $f$ is a pure $d$-th power of a linear form. Indeed, if we consider the kernel of the first catalecticant matrix we obtain $n$ linear forms which define a simple points $\xi \in \PP^n$. Then, by Apolarity Lemma, for a suitable choice of a scalar $c \in \CC$, $f = c\ell_\xi^d$.
	\end{example}
	
	\subsection{Waring loci and forms of high rank.}\label{ssec: loci}
	In \cite{CCO17}, the second author together with E. Carlini and M.V. Catalisano defined the concept of {\it Waring locus} of a homogeneous polynomial.
	
	\begin{notation}
		Given a subset $W$ of elements in a vector space, we denote by $\langle W \rangle$ their linear span. Similarly, if we consider a subset of points in a projective space, it will denote their projective linear span.
	\end{notation}
	
	\begin{definition}
		Let $f \in S_d$. Then, the {\bf Waring locus} of $f$ is the locus of linear forms that can appear in a minimal Waring decomposition of $f$, i.e.,
		$$
			\caW_f := \left\{ [\ell] \in \PP(S_1) ~|~ \exists \ell_2,\ldots,\ell_r,~r = \rk(f),~\text{s.t.}~f \in \left\langle \ell^d, \ell_2^d, \ldots, \ell_r^d \right\rangle\right\};
		$$
		analogously, by Apolarity Lemma,
		$$
			\caW_f := \left\{ P \in \PP^n ~|~ \exists P_2,\ldots,P_r,~r = \rk(f),~\text{s.t.}~I_{\XX} \subset f^\perp,~\XX = \{P,P_2,\ldots,P_r\}\right\}.
		$$
		The complement is called {\bf forbidden locus} of $f$ and denote $\caF_f := \PP^n \setminus \caW_f$.
	\end{definition}
	\begin{remark}
		The Waring locus (hence, the forbidden locus) is not necessary open or closed, e.g., in the case of planar cubic cusps it is given by the union of a point and a Zariski open subset of a line; see \cite[Theorem 5.1]{CCO17}. We only know that it is constructible since it can be described as a linear projection of (the open part of) the classical {\it Variety of Sums of Powers} (VSP) defined by K. Ranestad and F.-O. Schreyer \cite{RS00}, i.e., $
			{\it VSP}(f,\rk(f)) := \overline{\left\{ ([\ell_1],\ldots,[\ell_r]) \in {\it Hilb}_s(\PP(S_1)) ~|~ f \in \left\langle\ell_1^d,\ldots,\ell_r^d \right\rangle\right\}}.
		$
	\end{remark}
	The motivation that inspired the definition of Waring loci is to look for a {\it recursive} way to construct Waring decompositions, by adding, step-by-step, one power at the time. In \cite{CCO17}, Waring loci of quadrics, binary forms, monomials, plane cubics have been computed. 
	
	\begin{example}[Recursive decomposition of binary forms]
		In {\rm\cite[Theorem 3.5]{CCO17}}, the Waring locus of binary forms has been computed. By Sylvester's algorithm, if the rank $r = \rk(f)$ is less than the generic, i.e., $r < \left\lceil \frac{d+1}{2} \right\rceil$, or $r = \left\lceil \frac{d+1}{2} \right\rceil$ and $d$ is odd, then, we have a unique decomposition and, in particular, the Waring locus is closed and consists of $r$ distinct points. If $r > \left\lceil \frac{d+1}{2} \right\rceil$ or $r = \left\lceil \frac{d+1}{2} \right\rceil$ and $d$ is even, then, the Waring locus is dense. This means that, in the latter cases, for a general form $\ell\in S_1$, there exists a minimal Waring decomposition of $f$ involving $\ell^d$, up to some scalar. Actually, by {\rm \cite[Proposition 3.8]{CCO17}}, we know that for a general choice of $\ell_1,\ldots,\ell_s \in S_1$, where $s = r - \left\lceil \frac{d+1}{2} \right\rceil$, there exist scalars $c_1,\ldots,c_s$ such that $f - \sum_{i=1}^s c_i\ell_i^d$ has rank $r - s$. At this point, we cannot continue with generic linear forms because, depending on the parity of the degree, the remaining part of the decomposition might be uniquely determined.
	\end{example}
	
Our first result is a generalization of the fact explained in the latter example in a more general setting.

\begin{definition}
For any projective variety $X\subset \PP^N$,
we say that $X$ {\em spans} $\PP^{N}$ if every point of $\PP^{N}$ is in  the linear span of points in $X$. 

Given a point $P \in \PP^N$, the {\bf $X$-rank} of $P$ is the smallest number of points on $X$ whose linear span contains $P$. We denote it
$\rk_X(P)$. By convention, if $P$ is not in any linear span of points
of $X$, $\rk_X(P)=+\infty$.
\end{definition}
	
\begin{remark}
From this definition, the Waring rank is simply the $X$-rank inside the space of homogeneous polynomials of $\PP(S_d)$ with respect to the {\it Veronese variety} of $d$-th powers. Other relevant varieties that have been considered in relation to {\it tensor decompositions} are {\it Segre} and {\it Segre-Veronese varieties}. 
\end{remark}
	
\begin{definition}
Given a point $P\in \PP^N$, we define the {\bf $X$-decomposition locus} of $P$ as
	$$
		\caW_{X,P} = \left\{ Q\in X ~|~ \exists
                Q_2\in X,\ldots,Q_r\in X,~r = \rk_X(P),~~ P \in \left\langle Q,Q_1,\ldots,Q_r\right\rangle\right\}.
	$$
	The {\bf $X$-forbidden locus} is $\caF_{X,P} = X \setminus \caW_{X,P}$.
\end{definition}

\begin{remark}
	If $X$ is the Veronese variety of $d$-th powers of linear forms, the $X$-decomposition locus of a poin $[f] \in \PP(S_d)$ corresponds to the image of the Waring locus of $f$ via the $d$-th Veronese embedding. Analogously for the forbidden locus.
\end{remark}

In the following, we prove that the $X$-decomposition locus of a point with rank higher than the generic is dense in $X$. The proof follows an idea used in \cite{BHMT17} to study the loci of points with high rank.

\begin{theorem}\label{thm: Waring loci supgeneric}
Let $X\subset \PP^N$ be an irreducible projective variety, which spans $\PP^{N}$ and let $g$ be the generic $X$-rank. Let $P\in\PP^N$ with $r = \rk_X(P)$. If $r > g$, then $\caW_{X,P}$ is dense in $X$.
\end{theorem}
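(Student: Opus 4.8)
The plan is to reduce membership in $\caW_{X,P}$ to a statement about lines through $P$ and then induct on $r$. First I would record the following reformulation: for $Q\in X$, one has $Q\in\caW_{X,P}$ if and only if the line $\langle P,Q\rangle$ contains a point $z\notin\{P,Q\}$ with $\rk_X(z)\le r-1$. For the forward implication, a minimal decomposition $P=a_1Q+\sum_{i\ge 2}a_iQ_i$ produces $z:=P-a_1Q=\sum_{i\ge 2}a_iQ_i$, which has $\rk_X(z)\le r-1$, is $\ne Q$ (the points of a minimal decomposition are linearly independent) and is $\ne P$ (since $a_1\ne 0$ and $P\notin X$, as $r\ge 2$). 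For the converse, such a $z$ together with $Q$ spans $\langle P,Q\rangle$, so $P$ is a combination of $z$ and $Q$ with nonzero coefficient on $Q$, and adjoining $Q$ to a minimal decomposition of $z$ gives a decomposition of $P$ of length $\le r$, which must be minimal and contain $Q$. (Note also $g\ge 2$: if $g=1$ then $X=\PP^N$, every point has rank $1$, and $r>g$ is impossible.)

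Next I would induct on $r>g$. For the inductive step $r\ge g+2$, fix a minimal decomposition $P=\sum_{i=1}^r a_iR_i$ and set $z:=P-a_1R_1$. Then $\rk_X(z)\le r-1$, and also $\rk_X(z)\ge r-1$ (else $\rk_X(P)\le 1+\rk_X(z)<r$), so $\rk_X(z)=r-1\ge g+1>g$; by the inductive hypothesis $\caW_{X,z}$ is dense in $X$. If $Q\in\caW_{X,z}$, a minimal decomposition $z=b_1Q+\sum_{i\ge 2}b_iS_i$ together with the summand $a_1R_1$ gives a decomposition of $P$ of length $\le r$ containing $Q$, necessarily minimal; hence $\caW_{X,z}\subseteq\caW_{X,P}$ and $\caW_{X,P}$ is dense.

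The heart of the argument is the base case $r=g+1$, which follows the idea of \cite{BHMT17}. Fixing a minimal decomposition $P=\sum_{i=1}^{g+1}a_iR_i$ and putting $z_0:=P-a_1R_1$, the same computation gives $\rk_X(z_0)=g$, i.e.\ $z_0$ lies in the locus $U:=\{y\in\PP^N:\rk_X(y)=g\}$ of points of generic rank, which contains a dense open subset of $\PP^N$. I would then introduce the incidence variety $\mathcal L:=\{(Q,z)\in X\times\PP^N:z\in\langle P,Q\rangle,\ z\ne Q\}$: since $P\notin X$ this is an $\mathbb A^1$-bundle over $X$ via the first projection $\pi_X$, hence irreducible of dimension $\dim X+1$, and its second projection $\psi(Q,z)=z$ has image (dense in) the cone over $X$ with vertex $P$. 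As $(R_1,z_0)\in\mathcal L$ with $\psi(R_1,z_0)=z_0\in U$, the (irreducible) cone meets $U$, so $\psi^{-1}(U)$ contains a dense open subset of $\mathcal L$; for $(Q,z)$ in it one has $z\in\langle P,Q\rangle\setminus\{P,Q\}$ (here $z\ne P$ because $\rk_X(z)=g\ne g+1$) and $\rk_X(z)=g\le r-1$, so $Q\in\caW_{X,P}$ by the reformulation. Since $\pi_X$ is surjective, $\pi_X(\psi^{-1}(U))$ is dense in $X$ and contained in $\caW_{X,P}$, finishing the base case.

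The step I expect to be most delicate is exactly this last one: the claim that $C_P(X)$ meets the \emph{dense open part} of $U$ and not merely $U$ itself. The point $z_0$ has rank precisely $g$, but a priori it could lie on the boundary $\overline{\{\rk_X\ge g+1\}}$, and then irreducibility of $C_P(X)$ does not by itself upgrade ``$C_P(X)\cap U\ne\emptyset$'' to ``$C_P(X)$ contains points of $U$ forming a dense subset''. To close this I would either show that the generic-rank locus is open, or, more robustly, exploit that $z_0$ may be varied over all minimal decompositions of $P$ — these form a family of dimension $\ge \dim X+1$ — and argue that this family cannot be swept entirely into $\overline{\{\rk_X\ge g+1\}}$; this is precisely the input I would draw from \cite{BHMT17}.
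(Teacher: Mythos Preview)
Your approach is essentially the paper's: induction on $r$, with the base case $r=g+1$ handled via the cone $C_P=\bigcup_{Q\in X}\langle P,Q\rangle$ and the generic-rank locus $\sigma^\circ_g$. Your inductive step is in fact cleaner than the paper's --- you peel off a single point to obtain $z$ of rank $r-1$ and note $\caW_{X,z}\subseteq\caW_{X,P}$, whereas the paper splits $P$ along a line into summands of ranks $g+1$ and $r-g-1$ and then intersects two auxiliary lines to produce a point of rank $r-1$ on $\langle P,Q\rangle$.

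On the base case you are being \emph{more} careful than the paper. The paper simply writes ``As $X$ and $C_P$ are irreducible and $\sigma^\circ_g$ is dense in $\PP^N$, the Zariski closure of $C_P\cap\sigma^\circ_g$ is $C_P$'' and moves on. That is precisely the inference you flag: irreducibility of $C_P$, density of $\sigma^\circ_g$, and nonemptiness of the intersection do not by themselves force $C_P\cap\sigma^\circ_g$ to be dense in $C_P$ --- one needs $C_P$ to meet the \emph{open} part of $\sigma^\circ_g$, not merely $\sigma^\circ_g$. So the delicate point you isolate in your last paragraph is real, but it is the same point the paper passes over in one line (both ultimately leaning on the idea from \cite{BHMT17}). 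Your argument is therefore at least as complete as the paper's own proof; you have not introduced any new gap.
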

\begin{proof}
We proceed by induction on $r$. Assume that $P$ has $X$-rank $r >
g+1$. Then it lies on a line $\langle P_1, P_2\rangle$, where $P_1$
has $X$-rank $g+1$ and $P_2$ has $X$-rank $r-g-1$. Now, if we assume
that the claim holds for $P_1$, we have that, for a general point $Q
\in X$, we have a point $Q' \in \langle P_1,Q\rangle$ of $X$-rank
$g$. Now, let $P'$ be the point of intersection $\langle P, Q\rangle
\cap  \langle Q' , P_2\rangle$. Since $P' \in \langle Q', P_2\rangle$,
then $\rk_X(P') = r-1$. Hence, $P\in \langle P', Q\rangle$ with $P'$
of $X$-rank $r-1$ and $Q\in X$ so that $Q \in \caW_{X,P}$.
	
Hence, we just need to prove the claim in the case $\rk_X(P) =
g+1$. Let $\sigma^\circ_g$ be the set of points of $X$-rank equal to
$g$. By definition of the generic rank, we know that $\sigma^\circ_g$
is a dense subset of $\PP^N$. For any $P \in \PP^{N}$ of rank $g+1$,
let $C_P = \langle P, X\rangle$ be the union of all lines passing
through $P$ and a point on $X$. As $P$ is of $X$-rank $g+1$, it is on
a line $\langle P', Q\rangle$ with $P' \in \sigma^\circ_g$ of $X$-rank
$g$ and $Q\in X$. Thus $C_{P} \cap \sigma^\circ_g$ is non-empty. As
$X$ and $C_{P}$ are irreducible and $\sigma^\circ_g$ is dense in
$\PP^{N}$, the Zariski closure of $C_{P} \cap \sigma^\circ_g$ is
$C_{P}$ and $C_P \cap \sigma^\circ_g$ is dense in $C_P$. 
Therefore, for a generic point $Q \in X$,
there is a point $P'\in \sigma^\circ_g$ with $X$-rank equal to
$g$ on the line $\langle P, Q\rangle$. By definition of $\caW_{X,P}$,
it implies that $Q \in \caW_{X,P}$. This concludes the proof.
\end{proof}

\begin{corollary}
Let $g$ be the generic rank of forms of degree $d$ in $n+1$ variables. Let $f\in S_d$ with $r = \rk(f)$. If $r > g$, then for any general choice of $\ell_1,\dots,\ell_s \in S_1$, with $s = r - g$, there exists a minimal Waring decomposition involving the $\ell_i$'s.
\end{corollary}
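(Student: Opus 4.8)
The plan is to deduce this from Theorem~\ref{thm: Waring loci supgeneric} by peeling off one power at a time, inducting on $s=r-g$. Let $X\subset\PP(S_d)$ be the Veronese variety of $d$-th powers, namely the image of $\PP(S_1)\to\PP(S_d),\ [\ell]\mapsto[\ell^d]$; it is irreducible, it spans $\PP(S_d)$ since the $d$-th powers of linear forms span $S_d$, and its generic $X$-rank is the generic Waring rank $g$. By the remark preceding the statement, $\caW_{X,[f]}$ is the image of $\caW_f$ under this Veronese embedding. For a form $h$ and $t\geq 0$ I would work with the ``good locus''
$$
G_t(h) := \left\{ ([\ell_1],\dots,[\ell_t])\in\PP(S_1)^t ~|~ \exists\, c_1,\dots,c_t\in\CC \text{ with } \rk\left(h-\sum_{i=1}^t c_i\ell_i^d\right)\leq g \right\}.
$$
This set is constructible: it is the image, under the projection $\PP(S_1)^t\times\CC^t\to\PP(S_1)^t$, of the locus where $h-\sum_i c_i\ell_i^d$ has $X$-rank at most $g$, and that rank-$\leq g$ locus is constructible by Chevalley's theorem, being the image of the incidence variety $\{(x_1,\dots,x_g,p)~|~x_j\in X,\ p\in\langle x_1,\dots,x_g\rangle\}$ under projection to $p$. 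The goal is to prove that $G_s(f)$ is dense in $\PP(S_1)^s$. Granting this, a general tuple in $G_s(f)$ yields scalars $c_i$ with $\rk(f-\sum_i c_i\ell_i^d)\leq g$; equality must hold because $\rk(f)=r=s+g$, all the $c_i$ are nonzero for a general tuple (otherwise dropping a term would force $\rk(f)<r$), and a minimal decomposition of the remaining form together with $\sum_i c_i\ell_i^d$ is the required minimal Waring decomposition of $f$ involving $\ell_1,\dots,\ell_s$.

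The base case $s=1$ is exactly Theorem~\ref{thm: Waring loci supgeneric}: since $r>g$ it gives that $\caW_f$ is dense in $\PP(S_1)$, and since $\caW_f\subseteq G_1(f)$, so is $G_1(f)$. For the inductive step ($s\geq2$) I would fix a general $[\ell_1]\in\caW_f$; by definition of $\caW_f$ there is $c_1\neq0$ with $f=c_1\ell_1^d+(\text{a sum of }r-1\text{ powers})$. Put $f_1:=f-c_1\ell_1^d$. Then $\rk(f_1)=r-1$: the displayed decomposition gives $\rk(f_1)\leq r-1$, while $\rk(f)\leq1+\rk(f_1)$ gives $\rk(f_1)\geq r-1$. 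As $s\geq2$ we have $r-1>g$, so the inductive hypothesis applies to $f_1$ with $s-1=(r-1)-g$ and shows that $G_{s-1}(f_1)$ is dense in $\PP(S_1)^{s-1}$. Moreover $G_{s-1}(f_1)$ is contained in the fibre of $G_s(f)$ over $[\ell_1]$ (for the first projection), since adding back the fixed term $c_1\ell_1^d$ turns any decomposition of $f_1$ with at most $g$ summands into one of $f$.

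Thus, for a general $[\ell_1]$, the fibre of $G_s(f)$ over $[\ell_1]$ is dense in the irreducible variety $\PP(S_1)^{s-1}$; in particular the first projection $G_s(f)\to\PP(S_1)$ is dominant. Since $G_s(f)$ is constructible, its Zariski closure surjects onto $\PP(S_1)$ with general fibre the whole of $\PP(S_1)^{s-1}$, hence equals the (irreducible) variety $\PP(S_1)^s$; that is, $G_s(f)$ is dense, which closes the induction. The one step that needs care — and the only real obstacle beyond invoking Theorem~\ref{thm: Waring loci supgeneric} — is exactly this: upgrading the \emph{sequential} genericity ``$\ell_1$ general, then $\ell_2,\dots,\ell_s$ general'' to a \emph{simultaneously} general tuple $(\ell_1,\dots,\ell_s)$. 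Once the bounded-$X$-rank loci are recognized as constructible, this reduces to the routine fibration argument above.
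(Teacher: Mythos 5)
Your argument is correct and follows the same route as the paper, which simply applies Theorem \ref{thm: Waring loci supgeneric} to the Veronese variety $r-g$ times, peeling off one general power at a time. The extra work you do — recasting the statement as density of the constructible locus $G_s(f)$ and using the fibration argument to upgrade sequential genericity of $\ell_1,\dots,\ell_s$ to genericity of the whole tuple — is exactly the detail the paper's one-line proof leaves implicit, and it is carried out correctly.
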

\begin{proof}
It directly follows by applying $r-g$ times Theorem \ref{thm: Waring loci supgeneric} on Veronese varieties.
\end{proof}

\begin{remark}
	A big challenge when we want to use Waring loci to construct minimal Waring decompositions is that, fixed a linear form $\ell$ in the Waring locus of $f$, {\it there exists} a suitable coefficient such that $\rk(f+c\ell^d) = \rk(f)-1$, but computing the scalar $c$ is not trivial. In the case of forms of high rank, we have seen that $\ell$ can be chosen generically, but this also implies that the scalar $c$ can be chosen generically. 
	
	Indeed, let $f$ be of rank $r$ higher than the generic and let $\ell$ be a general linear form. Since the (closure of the) locus of forms of rank bigger than $r$ is a proper subvariety of forms of rank $r-1$, we have that on the line $\langle f, \ell^d\rangle$ the condition of having rank $r-1$ is an open condition. Therefore, since it is also non empty because $\ell$ is in the Waring locus of $f$, the general point of the line has rank $r-1$.
\end{remark}

\begin{remark}
In the proof of Theorem \ref{thm: Waring loci supgeneric}, the fact that the point has rank strictly larger than the generic rank is crucial. Indeed, if we consider forms of smaller rank, anything can happen. For example:
\begin{enumerate}
\item The Waring locus can be {\it open}: if we consider a general
  plane cubic of rank $4$, we know that the apolar ideal is generated
  by three conics which define a linear base-point-free linear system;
  hence, by Bertini's Theorem, if we impose the passage through a
  point $P$ of the plane, we obtain a pencil of conics. These conics
  define a set of four reduced points, for $P$ outside a discriminant
  curve of $\PP^{2}$. The Waring locus is the complementary of this
  discriminant curve and is open (see for more details \cite[Section 3]{CCO17}); 
\item The Waring locus can be {\it closed} and {\it $0$-dimensional}: a general cubic of $\PP^3$ has rank $5$ and, by Sylvester Pentahedral Theorem \cite{Cle61}, we have that it is {\it identifiable}, i.e., has a unique decomposition. Therefore, the Waring locus is the unique minimal apolar set of points. It is classically known that also the general binary form of odd degree and the general plane quintic are identifiable. Recently, Galuppi and Mella proved that this are the only cases \cite{GM16}. An algorithm to find such a unique decomposition is presented in \cite[Theorem 3.9]{OO13}.
	\item The Waring locus can be {\it neither closed nor open}: consider a plane cuspidal cubic which has rank $4$ and, up to a change of variables, it can be written in the form $x_0^3 + x_1^2x_2$. The Waring locus is given by the union of the point $(1:0:0)$ and the pinched line $\PP^1_{x_1,x_2}\setminus (0:1)$ (see \cite[Section 3]{CCO17}).
\end{enumerate}
\end{remark}

In the following lemma, we generalize the latter case to a more general setting. 

\begin{lemma}\label{lemma: general cusps}
		Let $f = x_0^d + g(x_1,\ldots,x_n) \in S_d$ of rank $n+2$, with $n \geq 2$, and degree $d \geq 4$. Assume that $g^\perp_2 = [(y_0,G_1,\ldots,G_N)]_2$, where the $G_i$'s are quadrics and $N = {n \choose 2} - 1$. Then, $\caW_f = (1:0:\ldots:0) \cup \caW_g$. In other words, any minimal Waring decomposition of $f$ is given by $x_0^d$ plus a minimal decomposition of $g$.
	\end{lemma}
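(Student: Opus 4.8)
\emph{Proof plan.} The strategy is to read off the key Hilbert function value $\HF_{A_f}(2)$ from the hypothesis, notice it equals $\rk(f)=n+2$ (a very rigid, ``tight'' situation), and then use this to pin down the shape of every minimal apolar set to $f$. Set $R:=\CC[y_1,\dots,y_n]$ and let $J\subset R$ be the apolar ideal of $g$ viewed as a form in $x_1,\dots,x_n$; then $g^\perp=(y_0)+J$ inside $T$, so the hypothesis $g^\perp_2=[(y_0,G_1,\dots,G_N)]_2$ together with $\dim(y_0)_2=n+1$ forces $\dim J_2=N=\binom{n}{2}-1$, i.e.\ $\HF_{A_g}(2)=\binom{n+1}{2}-N=n+1$. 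Since $\partial_0 g=0$ and $\partial_i x_0^d=0$ for $i\ge1$, a direct check shows that a form $G\in T$ of degree $e\le d-1$ satisfies $G\circ f=0$ if and only if $G\in(y_1,\dots,y_n)$ and $G(0,y_1,\dots,y_n)\in J$, while $G\circ f=G\circ g$ for every $G\in R$; consequently
$$
f^\perp_{\le d-1}=\bigl[\,y_0\cdot(y_1,\dots,y_n)+J\,\bigr]_{\le d-1}\qquad\text{and}\qquad f^\perp\cap R=J.
$$
In particular $f^\perp_2=\langle y_0y_1,\dots,y_0y_n\rangle\oplus J_2$ has dimension $n+N=\binom{n+1}{2}-1$, so
$$
\HF_{A_f}(2)=\binom{n+2}{2}-\Bigl(\binom{n+1}{2}-1\Bigr)=n+2=\rk(f).
$$
(Note that $y_0^2\in g^\perp$ while $y_0^2\notin f^\perp$, since $\partial_0^2f\ne0$; this one missing quadric is exactly what makes $\HF_{A_f}(2)=\rk(f)$, the ``tight'' situation of the remark after Lemma \ref{lemma: regularity}.)

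Now let $\XX$ be an arbitrary minimal set of points apolar to $f$, so $|\XX|=n+2$. The inequalities $\HF_\XX\ge\HF_{A_f}$ recalled after Lemma \ref{lemma: apolarity} give $n+2=|\XX|\ge\HF_\XX(2)\ge\HF_{A_f}(2)=n+2$, hence $\HF_\XX(2)=\HF_{A_f}(2)$, and since $(I_\XX)_2\subseteq f^\perp_2$ have the same finite dimension, $(I_\XX)_2=f^\perp_2\supseteq\langle y_0y_1,\dots,y_0y_n\rangle$. Therefore $\XX$ lies in the common zero set of $y_0y_1,\dots,y_0y_n$, namely $\{(1:0:\dots:0)\}\cup\{y_0=0\}$. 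Because $f$ is concise in $n+1$ variables (as $g$ involves all of $x_1,\dots,x_n$), $\HF_\XX(1)\ge\HF_{A_f}(1)=n+1$ forces $\XX$ to span $\PP^n$; a subset of the hyperplane $\{y_0=0\}$ cannot, so $(1:0:\dots:0)\in\XX$ and $\XX_1:=\XX\setminus\{(1:0:\dots:0)\}$ is a set of $n+1$ reduced points inside $\{y_0=0\}\cong\PP^{n-1}$. Writing $I_\XX=y_0\cdot(y_1,\dots,y_n)+\widetilde{I_{\XX_1}}$, where $\widetilde{I_{\XX_1}}$ denotes the ideal of $\XX_1$ computed in $\PP^{n-1}$ and extended to $T$, and intersecting $I_\XX\subseteq f^\perp$ with $R$, we obtain $\widetilde{I_{\XX_1}}\subseteq f^\perp\cap R=J$; that is, $\XX_1$ is apolar to $g$. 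Hence $\rk(g)\le n+1$, and since $\rk(g)\ge\ell(g)\ge\HF_{A_g}(2)=n+1$, in fact $\rk(g)=n+1$, so $\XX_1$ is a \emph{minimal} apolar set to $g$. Thus every minimal apolar set to $f$ is $\{(1:0:\dots:0)\}$ together with a minimal apolar set to $g$ (which is the ``in other words'' statement), and so $\caW_f\subseteq(1:0:\dots:0)\cup\caW_g$.

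For the reverse inclusion, $(1:0:\dots:0)\in\caW_f$ because it lies in each of the (nonempty family of) minimal apolar sets just described. If $P\in\caW_g$, choose a minimal apolar set $\YY$ to $g$ with $P\in\YY$; by Lemma \ref{lemma: essential var}(2) every point of $\YY$ lies in $\{y_0=0\}$, so $\XX:=\{(1:0:\dots:0)\}\cup\YY$ has ideal $y_0\cdot(y_1,\dots,y_n)+\widetilde{I_\YY}$. Its generators $y_0y_i$ ($1\le i\le n$) lie in $f^\perp$ since $(y_0y_i)\circ f=\partial_0\partial_i(x_0^d+g)=0$, and $\widetilde{I_\YY}\subseteq J\subseteq f^\perp$, so $I_\XX\subseteq f^\perp$; as $|\XX|=n+2=\rk(f)$, $\XX$ is a minimal apolar set to $f$ containing $P$, whence $P\in\caW_f$. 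Combining the two inclusions gives $\caW_f=(1:0:\dots:0)\cup\caW_g$. The only genuinely delicate step is the ``$y_0$-splitting'' bookkeeping: establishing $f^\perp_2=\langle y_0y_1,\dots,y_0y_n\rangle\oplus J_2$ and $f^\perp\cap R=J$ from $f=x_0^d+g$, and the ideal identity $I_\XX=y_0\cdot(y_1,\dots,y_n)+\widetilde{I_{\XX_1}}$ for a point set split off the coordinate point $(1:0:\dots:0)$; once these are in hand, the degree-$2$ rigidity does all the work. The hypotheses $n\ge2$ and $d\ge4$ enter only to ensure $2<d$ and that a form $g$ with $\HF_{A_g}(1)=n$, $\HF_{A_g}(2)=n+1$ can exist at all, the excluded low-degree cases being empty or the planar cuspidal one discussed earlier.
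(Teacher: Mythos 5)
Your proof is correct and follows essentially the same route as the paper's: in both, the decisive step is that the hypothesis forces $\HF_{A_f}(2)=n+2=\rk(f)$, so every minimal apolar set $\XX$ satisfies $(I_\XX)_2=f^\perp_2$ and therefore lies in $\{(1:0:\ldots:0)\}\cup\{x_0=0\}$; you merely verify the splitting $f^\perp_{\le d-1}=[\,y_0\cdot(y_1,\ldots,y_n)+J\,]_{\le d-1}$ by a direct computation where the paper cites \cite[Lemma 1.12]{BBKT15}, and you write out both inclusions for $\caW_f$ explicitly. The only soft spot is your parenthetical claim that $f$ is concise (i.e.\ that $g$ genuinely involves all of $x_1,\ldots,x_n$), which is the intended reading but not literally among the stated hypotheses; it can be bypassed by observing that if $(1:0:\ldots:0)\notin\XX$ then $\XX\subset\{x_0=0\}$ would give $f\in\CC[x_1,\ldots,x_n]$, contradicting $y_0^d\circ f\neq 0$.
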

	\begin{proof}
		By \cite[Lemma 1.12]{BBKT15}, we know that $f^\perp_i = ((y_0^{d})^\perp)_i \cap g^\perp_i$, for $i \leq d-1$. In particular, since $d \geq 4$, we get $f^\perp_2 = (y_0y_1,\ldots,y_0y_n,G_1,\ldots,G_N)$. Hence, we have that 
		$$
			\HF_{A_f}(2) = {n+2 \choose 2} - n - \left({n \choose 2} - 1\right) = n+2.
		$$
		Therefore, since any minimal set $\XX$ of points apolar to $f$ has rank $n+2$, we obtain $(I_{\XX})_2 = f^\perp_2$. Hence, $\XX$ is contained in the variety defined by $(f^\perp_2)$ which is 
		$(1:0:\ldots:0) \cup \PP^{n-1}_{x_1,\ldots,x_n}$. Hence, any minimal decomposition is of the type $x_0^d + \sum_{i=1}^r \ell_i^d(x_1,\ldots,x_n)$, where $r = \rk(g)$. By restricting on $\{x_0 = 0\}$, we get a minimal decomposition of $g$ and the claim follows. 
	\end{proof}
	
\section{Decompositions of low rank polynomials}\label{sec: low}
From the previous section, we noticed that if the degree of
the polynomial is sufficiently large with respect to the regularity of
the points of a minimal decomposition, then there is a unique Waring decomposition which can be found directly from the generators of the apolar ideal (Theorem \ref{thm: regularity}). Also, we noticed that if the rank is sufficiently large, then we can choose some elements of a minimal Waring decomposition generically and reduce the rank to be equal to the general rank (Theorem \ref{thm: Waring loci supgeneric}).

	In this section, we use these tools to construct minimal Waring decompositions of polynomials of small rank, for {\it any} number of variables and {\it any} degree. 
	
\begin{remark}
	Any quadric $q(\bfx)$ can be represented by a symmetric matrix $Q$, i.e., $q(\bfx) = \bfx Q \bfx^T$. Then, it is well known that the Waring rank of $q$ coincides with the rank of $Q$ and a minimal Waring decomposition is obtained by finding a diagonal form of $Q$. Therefore, we will always assume $d \geq 3$.
\end{remark}

\begin{example}[Rank $1$ and $2$]\label{example: low rank}
	The rank $1$ case can be easily explained in terms of essential variables, see Example \ref{example: rank 1}. The rank $2$ case can be explained using the Sylvester algorithm, see Example \ref{example: sylvester}. 
	In particular, if the Hilbert function of $A_f$ is $1 ~~ 2 ~~ 2 ~~ \cdots ~~ 2 ~~ 1 ~~ -$, then it means that $f$ has two essential variables and the apolar ideal is given by
	$$
		f^\perp = (L_1,\ldots,L_{n-1},G_1,G_2), \text{ where }\deg(L_i) = 1, \deg(G_1) = 2, \deg(G_2) = d,
	$$
	where $G_1$ is square-free.
\end{example}

	\begin{lemma}\label{lemma: rank = essential}
		Let $f \in \CC[x_0,\ldots,x_n]$ be a concise form of degree $d$. Then, $\rk(f) = n+1$ if and only if $\ell(f) = n+1$ and $f^\perp_2$ defines a set of reduced points. In this case, there is a unique minimal apolar set of points.
	\end{lemma}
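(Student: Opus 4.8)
The plan is to treat the two implications separately, using throughout that conciseness of $f$ is equivalent to $h_f(1)=n+1$ (Lemma \ref{lemma: essential var}); together with $\rk(f)\ge\ell(f)$ this already shows $\rk(f)\ge n+1$ for every concise $f$, so $\rk(f)=n+1$ is the \emph{minimal} possible value, and the condition $\ell(f)=n+1$ is merely the statement that $h_f(i)\le n+1$ for every $i$. The reverse implication will come almost for free from Lemma \ref{lemma:fk}, while the forward implication together with the uniqueness clause will be read off from Theorem \ref{thm: regularity} once the regularity of a minimal apolar set has been pinned down.

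For $(\Rightarrow)$, suppose $\rk(f)=n+1$ and fix a minimal apolar set $\XX=\{\xi_0,\ldots,\xi_n\}$, so $f=\sum_i c_i\ell_{\xi_i}^d$ with all $c_i\ne 0$ (otherwise $f$ would be a sum of at most $n$ powers). The first key step is that $\XX$ must consist of $n+1$ points in linearly general position: if the $\ell_{\xi_i}$ spanned a subspace of $S_1$ of dimension $\le n$, then $f$ would lie in a polynomial ring in at most $n$ variables, contradicting conciseness. Hence $\HF_\XX(i)=n+1$ for all $i\ge 1$, so the regularity index is $\rho(\XX)=1$; and from $I_\XX\subset f^\perp$ we get $h_f(i)\le\HF_\XX(i)\le n+1$ for all $i$, which with $h_f(1)=n+1$ gives $\ell(f)=n+1$. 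Finally, since $d\ge 3=2\rho(\XX)+1$, Theorem \ref{thm: regularity} yields both that $\XX$ is the unique minimal apolar set of $f$ and that $I_\XX=(f^\perp_{\le\rho(\XX)+1})=(f^\perp_2)$, the last equality using $f^\perp_0=f^\perp_1=0$ (conciseness again); in particular $f^\perp_2$ defines the reduced set $\XX$.

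For $(\Leftarrow)$, suppose $\ell(f)=n+1$ and that $f^\perp_2$ defines a set $\XX$ of $r$ reduced points, i.e., $(f^\perp_2)=I_\XX$. Lemma \ref{lemma:fk} applied with $k=2$ gives $\rk(f)\le r=h_f(2)$, and since $\ell(f)=n+1$ forces $h_f(2)\le n+1$, this yields $\rk(f)\le n+1$; combined with the unconditional bound $\rk(f)\ge n+1$ we conclude $\rk(f)=n+1$. Then $n+1=\rk(f)\le r=h_f(2)\le n+1$ forces $r=n+1$, so $\XX$ is a minimal apolar set of $n+1$ points (it is apolar because $I_\XX=(f^\perp_2)\subset f^\perp$), and its uniqueness is the clause already obtained in $(\Rightarrow)$, which used only $\rk(f)=n+1$ and conciseness.

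The step requiring the most care is the interface with Lemma \ref{lemma:fk}: one must make sure that ``$f^\perp_2$ defines a set of reduced points'' is read as ``the ideal $(f^\perp_2)$ generated by the apolar quadrics is the saturated radical ideal of a reduced point scheme'', so that $\rk(f)\le h_f(2)$ genuinely follows; symmetrically, in $(\Rightarrow)$ one must check that the ideal produced by Theorem \ref{thm: regularity} is generated purely in degree $2$, which is precisely where $f^\perp_1=0$ (conciseness) re-enters. Everything else is routine bookkeeping with Hilbert functions of points in linearly general position, whose ideals are generated by quadrics.
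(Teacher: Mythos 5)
Your argument is correct and in substance it is the paper's own: both directions hinge on conciseness forcing the $n+1$ points of a minimal apolar set to be linearly independent (so, after a change of coordinates, the coordinate points, with $\rho(\XX)=1$) together with the bounds $\rk(f)\ge \ell(f)\ge h_f(1)=n+1$ and the apolarity inequality $h_f(i)\le \HF_\XX(i)$. The only difference is routing: you extract uniqueness and $I_\XX=(f^\perp_2)$ from Theorem \ref{thm: regularity} (legitimately using the section's standing assumption $d\ge 3=2\rho(\XX)+1$) and the converse from Lemma \ref{lemma:fk}, whereas the paper normalizes $f=x_0^d+\cdots+x_n^d$ and compares Hilbert functions directly, noting that $f^\perp_2=\langle y_iy_j\rangle$ cuts out exactly the $n+1$ coordinate points — both devices are already in the paper and give the same conclusions, and your explicit caveat that ``defines a set of reduced points'' means $(f^\perp_2)=I_\XX$ matches the paper's reading in Lemma \ref{lemma:fk}.
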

	\begin{proof}
		If $f$ has $n+1$ essential variables and the rank is equal to $n+1$, then, up to a change of coordinate, we can write it as $f = x_0^d + \ldots + x_n^d$. In this case, we know that
		$$
			\HF_{A_f} : ~~1 ~~~ n+1 ~~~ n+1 ~~~ \ldots ~~~ n+1 ~~~ 1 ~~~ - .
		$$
		Hence, any minimal set of points is such that $(I_{\XX})_{2} = (f^\perp_2) = \left\langle y_iy_j ~|~ i,j = 0,\ldots,n\right\rangle$. These quadrics define a set of $n+1$ reduced coordinate points; therefore, also uniqueness follows.
		
		Viceversa, if $I_{\XX} = (f_2^\perp)$ is a set of reduced points and $\ell(f) = n+1$, since $f$ is concise, we have that
		$$
			\HF_{S/I_{\XX}} : ~~ 1 ~~~ n+1 ~~~ n+1 ~~~ n+1 ~~~ \cdots .
		$$
		Therefore, $|\XX| = n+1$ and, by Apolarity Lemma, we have $\rk(f) = n+1$.
	\end{proof}
	
	Now, we can start our analysis of minimal decompositions of low rank polynomials.
	
\subsection{Polynomials of rank $3$.}
If the rank of $f$ is equal to $3$, then we have that $f$ has at most three essential variables. Hence, we only have two possible configurations of points:
\begin{enumerate}
	\item[(3a)] three collinear points;
	\item[(3b)] three general points. 
\end{enumerate}
\begin{proposition}\label{prop: rank 3}
		Let $f\in S_d$ be a form of rank $3$.
		\begin{enumerate}
			\item[{\rm (3a)}] If $f$ has two essential variables, then $f^\perp = (L_1,\ldots,L_{n-1},G_1,G_2)$, where we set $d_i := \deg(G_i)$ and $d_1 \leq d_2$. In particular, 
			\begin{enumerate}
				\item[(i)] for $d = 3,4$, minimal apolar sets of points are given by 
				$I_\XX = (L_1,\ldots,L_{n-1}, HG_1 + \alpha G_2)$, for a general choice of $H \in T_{d_2-d_1}$ and $\alpha \in \CC$;
				\item[(ii)] if $d \geq 5$, there is a unique minimal apolar set of points given by $I_\XX = (L_1,\ldots,L_{n-1},G_1)$.
			\end{enumerate}
			\item[{\rm (3b)}] If $f$ has three essential variables, there is a unique minimal apolar set of points given by $I_{\XX} = (f^\perp_2)$.
		\end{enumerate}
	\end{proposition}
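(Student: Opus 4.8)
The plan is to handle the two configurations separately, in each case first passing to the essential variables of $f$ via Lemma \ref{lemma: essential var}. For \textbf{(3a)}, since $f$ has two essential variables, after a linear change of coordinates I may assume $f\in\CC[x_0,x_1]$, so $y_2,\dots,y_n\in f^\perp$ and, modulo these, $f^\perp$ restricts to the apolar ideal of $f$ regarded as a binary form. By Macaulay's duality in codimension two (equivalently, the discussion in Example \ref{example: sylvester}), this restricted ideal is a complete intersection $(G_1,G_2)$ with $\deg G_i=d_i$, $d_1\le d_2$ and $d_1+d_2=d+2$; hence $f^\perp=(L_1,\dots,L_{n-1},G_1,G_2)$, the $L_i$ being the preimages of $y_2,\dots,y_n$ (a general independent set of linear forms in the original coordinates). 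The key reduction is then: by Lemma \ref{lemma: essential var}(2) any minimal apolar set $\XX$ lies on the line $Z(L_1,\dots,L_{n-1})\cong\PP^1$, so, being $\rk(f)=3$ reduced points of $\PP^1$, it has principal ideal $I_\XX=(L_1,\dots,L_{n-1},F)$ with $F$ a square-free binary cubic in the degree-$3$ part of $(G_1,G_2)$.

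Next I would pin down the numerics using Sylvester's algorithm (Example \ref{example: sylvester}): $\rk(f)=d_1$ if $G_1$ is square-free and $\rk(f)=d_2$ otherwise. Imposing $\rk(f)=3$ together with $2\le d_1\le d_2$ (here $d_1\ge 2$ since $f$ is concise in two variables) leaves exactly three possibilities: $(d_1,d_2)=(2,3)$ with $G_1$ not square-free, forcing $d=3$; $(d_1,d_2)=(3,3)$, forcing $d=4$; and $(d_1,d_2)=(3,d-1)$ with $G_1$ square-free and $d-1>3$, forcing $d\ge 5$. In the last case $(G_1,G_2)_3=\langle G_1\rangle$ is one-dimensional, so necessarily $F=cG_1$ and $\XX=Z(G_1)$ is the unique minimal apolar set, which is (ii). In the first two cases $(G_1,G_2)_3=T_{d_2-d_1}G_1+\CC\,G_2=\{HG_1+\alpha G_2 : H\in T_{d_2-d_1},\ \alpha\in\CC\}$, and $\XX=Z(HG_1+\alpha G_2)$ precisely when this form is square-free; since $G_1,G_2$ have no common zero the linear system $\{HG_1+\alpha G_2\}$ is base-point-free, so by Bertini its general member is reduced, and (taking $\alpha\ne 0$ when $G_1$ is not square-free) every minimal apolar set arises in this way, which is (i). I expect this square-freeness/genericity step to be the only delicate point; the rest is bookkeeping with the pair $(d_1,d_2)$, where the main care is making sure the case list is exhaustive.

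For \textbf{(3b)}, after reducing to essential variables I may assume $f\in\CC[x_0,x_1,x_2]$ is concise with $\rk(f)=3$. Since $h_f(1)=3$ gives $3\le\ell(f)\le\rk(f)=3$, Lemma \ref{lemma: rank = essential} (applied with $n=2$) immediately yields that $f^\perp_2$ defines a reduced set of three points and that this is the unique minimal apolar set, i.e.\ $I_\XX=(f^\perp_2)$ (the non-essential variables simply contributing the ambient linear forms). Here essentially nothing is left to prove once Lemma \ref{lemma: rank = essential} is available.
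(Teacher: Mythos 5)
Your argument is correct and follows essentially the same route as the paper, which disposes of case (3a) by Sylvester's algorithm (Example \ref{example: sylvester}) and of case (3b) by Lemma \ref{lemma: rank = essential}; you merely make explicit the reduction to essential variables, the bookkeeping on $(d_1,d_2)$, and the square-freeness of the general element $HG_1+\alpha G_2$, all of which the paper leaves implicit. The only (shared) imprecision is that when $f$ has non-essential variables the equality $I_\XX=(f^\perp_2)$ in (3b) holds only up to saturation, as your parenthetical remark about the ambient linear forms already indicates.
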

	\begin{proof}
	Case (3a) follows from Sylvester algorithm (Example \ref{example: sylvester}) and case (3b) from Lemma \ref{lemma: rank = essential}.
	\end{proof}
	\begin{remark}
		By Lemma \ref{lemma: essential var}, we obtain a stratification of the locus of rank $3$ polynomials in the sense that, given a polynomial of rank $3$, {\it all} minimal apolar sets of points are either of type (3a) or of type (3b). In this way, we have that Question \ref{question:A} has positive answer for rank $3$ polynomials. In particular, we obtain that: if $f$ is of type (3a), then any minimal apolar set of points have Hilbert function $1 ~~ 2 ~~ 3 ~~ 3 ~~ \cdots$; while, if $f$ is of type (3b) then any minimal apolar set of points have Hilbert function $1 ~~ 3 ~~ 3 ~~ 3 ~~ \cdots$.
	\end{remark}

\begin{remark}\label{remark: Aronhold invariant}
The Zariski closure of the space of plane cubics of rank $3$ is an
hypersurface defined by the {\it Aronhold invariant}; e.g. see
\cite{LO13}. In \cite{Ott09}, G. Ottaviani describes how to compute
such invariant in terms of Pfaffians of particular skew-symmetric
matrices called {\it Koszul flattenings}. We refer also to \cite{OO13}
for a description of Koszul flattenings of homogeneous polynomials and
their use to compute decompositions of symmetric tensors. 
\end{remark}
 
	\subsection{Polynomials of rank $4$.} 
	The possible configurations of $4$ points in projective spaces are in Figure \ref{fig: 4 points}.
	\begin{figure}[H]
	\begin{center}
		\subfigure[(4a) Collinear]{
			\includegraphics[scale=0.3]{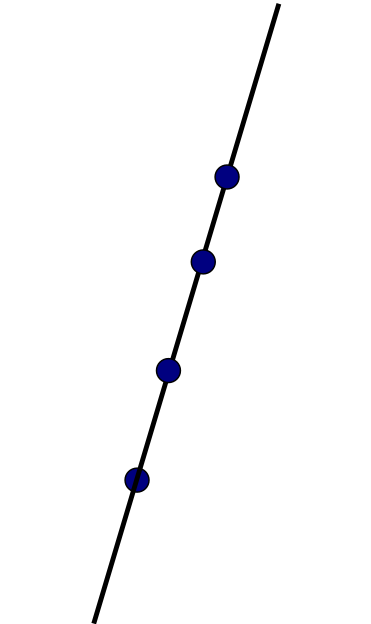}
		}
				\subfigure[(4b) Coplanar, with $3$ collinear.]{
			\includegraphics[scale=0.28]{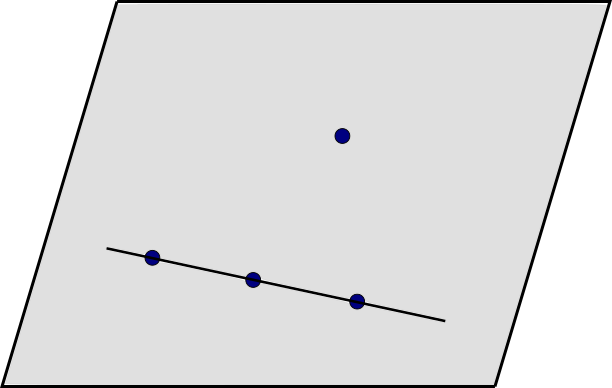}
		}
				\subfigure[(4c) General coplanar.]{
			\includegraphics[scale=0.28]{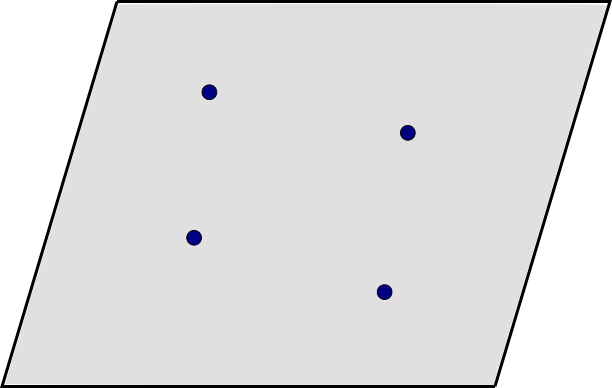}
		}	
				\subfigure[(4d) General points.]{
			\includegraphics[scale=0.28]{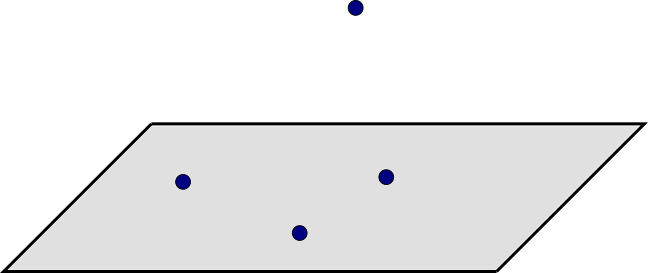}
		}	
		\end{center}
		\caption{Configurations of $4$ points in projective space.}\label{fig: 4 points}
	\end{figure}
	
	\begin{theorem}\label{prop: rank 4}
		Let $f \in S_d$ be a form of rank $4$.
		\begin{enumerate}
		\item[{\rm (4a)}] If $f$ has two essential variables ($h_{f}(1)=2$),
                  then $f^\perp = (L_1,\ldots,L_{n-1},G_1,G_2)$, where $\deg(G_i) = d_i$ and $d_1 \leq d_2$. In particular, it has to be $d \geq 4$ and:
			\begin{enumerate}
				\item[(i)] if $d = 4,5,6$, then $d_2 = 4$, and minimal apolar sets of points are defined by ideals
				$I_\XX = (L_1,\ldots,L_{n-1}, HG_1 + \alpha G_2)$, for a general choice of $H \in T_{6-d}$ and $\alpha \in \CC$;
				\item[(ii)] if $d \geq 7$, then $d_1 = 4$ and the unique minimal apolar set of points  is given by $I_\XX = (L_1,\ldots,L_{n-1},G_1)$.
			\end{enumerate}
			\item[{\rm (4b)}] If $f$ has three essential
                          variables ($h_{f}(1)=3$) and a minimal apolar set $\XX$ of type {\rm (4b)}, then:
			\begin{enumerate}
				\item[(i)] if $d = 3$, then
				$(f^\perp_2)$ defines a $0$-dimensional scheme $P + D$, where $P$ is a reduced point and $D$ is connected scheme of length $2$ whose linear span is a line $L_D$; moreover, any minimal apolar set is of the type $P \cup \XX'$, with $\XX' \subset L_D$;
				\item[(ii)] if $d = 4$, then
                                  $h_{f}(2)=4$, $(f^\perp_2)$ defines a disjoint union $P \cup L$, where $P$ is a reduced point and $L$ is a line not passing through $P$; moreover, any minimal apolar set is of the type $P \cup \XX'$, where $\XX' \subset L$.
				\item[(iii)] if $d \geq 5$, then
                                  $h_{f}(2)=4$, $(f^\perp_2)$ defines a disjoint union $P \cup L$, where $P$ is a reduced point and $L$ is a line not passing through $P$ and $(f^\perp_{3})$ defines the unique minimal apolar set.
			\end{enumerate}
			\item[{\rm (4c)}] If $f$ has three essential
                          variables ($h_{f}(1)=3$) and a minimal apolar set $\XX$ of type {\rm (4c)} then:
			\begin{enumerate}
				\item[(i)] if $d = 3$, then
                                  $Z(f_{2}^{\perp})= \emptyset$ and
                                  $\caW_f$ is dense in the plane of
                                  essential variables;
				\item[(ii)] if $d \geq 4$, there is a unique minimal apolar set of points given by $I_{\XX} = (f^\perp_2)$.
			\end{enumerate}
			\item[{\rm (4d)}] If $f$ has four essential variables, there is a unique minimal apolar set of points given by $I_\XX = (f^\perp_2)$.
		\end{enumerate}
	\end{theorem}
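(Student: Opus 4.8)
The overall strategy is to reduce to the concise case (by Lemma~\ref{lemma: essential var}(2) every minimal Waring decomposition uses only the essential variables, so we may assume $n+1=h_f(1)$) and then split according to $h_f(1)\in\{2,3,4\}$, which is the projective dimension of the span of any minimal apolar set $\XX$. Case (4a) is $h_f(1)=2$: working in two variables, Example~\ref{example: sylvester} gives $f^\perp=(L_1,\dots,L_{n-1},G_1,G_2)$ with $\deg G_i=d_i$, $d_1\le d_2$, $d_1+d_2=d+2$, and $\rk(f)$ equal to $d_1$ if $G_1$ is square-free and to $d_2$ otherwise. Imposing $\rk(f)=4$ forces $d\ge 4$ (a degree-$d$ binary form has rank at most $d$) and exactly the stated trichotomy: for $d\le 6$ one must have $d_2=4$ with $G_1$ not square-free, and the general $HG_1+\alpha G_2$ of degree $4$ (so $\deg H=d_2-d_1=6-d$) is square-free; for $d\ge 7$ one must have $d_1=4$ with $G_1$ square-free, hence uniqueness. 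Case (4d) is $h_f(1)=4$: since $4=h_f(1)\le\ell(f)\le\rk(f)=4$, the number of essential variables equals $\ell(f)$, so Lemma~\ref{lemma: rank = essential} applies verbatim and $(f^\perp_2)$ is the unique minimal apolar set (the four coordinate points after a change of variables).

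It remains to treat (4b) and (4c), where $f$ is a concise ternary form and $\XX\subset\PP^2$ is a set of $4$ points spanning the plane, of type (4b) (three collinear) or (4c) (no three collinear); in both cases $h_\XX\colon 1,3,4,4,\dots$, hence $\rho(\XX)=2$. Assume first $d\ge 4$. By Lemma~\ref{lemma: regularity}, $(I_\XX)_k=f^\perp_k$ for $k\le d-2$; in particular $(I_\XX)_2=f^\perp_2$ and $h_f(2)=4$. If $\XX$ is of type (4c), then $(I_\XX)_2$ is a pencil of conics whose two general members meet transversally, so $I_\XX=(f^\perp_2)$ is a complete intersection with $Z(f^\perp_2)=\XX$; any minimal apolar $\XX_0$ has $(I_{\XX_0})_2\subseteq f^\perp_2$ of dimension $\ge 2=\dim f^\perp_2$, whence $(I_{\XX_0})_2=f^\perp_2$, $\XX_0\subseteq Z(f^\perp_2)=\XX$, and $\XX_0=\XX$: this is (4c)(ii). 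If $\XX$ is of type (4b), choose coordinates so that the point of $\XX$ off the special line corresponds to $x_0$ and the line of the other three to $\langle x_1,x_2\rangle$; then $f=x_0^d+g(x_1,x_2)$ with $g$ a binary form of rank exactly $3$. Using $f^\perp_k=((x_0^d)^\perp)_k\cap(g^\perp)_k$ for $k\le d-1$ (\cite[Lemma 1.12]{BBKT15}, as in Lemma~\ref{lemma: general cusps}) one finds $f^\perp_2=\langle y_0y_1,y_0y_2\rangle$, whose zero scheme is the line $\{y_0=0\}$ disjoint from $P=(1:0:0)$. As in case (4c), any minimal apolar $\XX_0$ has $(I_{\XX_0})_2=f^\perp_2$, hence $\XX_0$ lies on this line-plus-point; not being four collinear points, $\XX_0$ contains $P$, and comparing the coefficient of $x_0^d$ shows the remaining three points give a rank-$3$ decomposition of $g$. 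By Example~\ref{example: sylvester} this decomposition is unique when $d\ge 5$ (then $d_1=3<d_2$), giving (4b)(iii) with the unique apolar set cut out by $f^\perp_3=(I_\XX)_3$, and forms an infinite family when $d=4$ (rank $3$ is the generic rank of a binary quartic), giving (4b)(ii).

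The case $d=3$ of (4b)(i) and (4c)(i) is handled by a direct analysis of $f^\perp_2=\ker\cat_2(f)$, a net of conics with $h_f=1,3,3,1$; under the apolar pairing its orthogonal is the net of polar conics $\langle\partial_0 f,\partial_1 f,\partial_2 f\rangle$, and one checks that $Z(f^\perp_2)\ne\emptyset$ if and only if this polar net contains the square of a linear form. If it does, integrating shows $f=x_0^3+g(x_1,x_2)$ in suitable coordinates with $g$ a binary cubic of rank $3$, so $(g^\perp)_2$ is spanned by $y_0y_1,y_0y_2$ and a perfect square $Q=Q(y_1,y_2)$ (the degree-$2$ generator of the apolar ideal of $g$) and $f^\perp_2=\langle y_0y_1,y_0y_2,Q\rangle$; hence $(f^\perp_2)$ defines $P+D$ with $P=(1:0:0)$ reduced and $D=\{y_0=Q=0\}$ a connected length-$2$ scheme spanning $L_D=\{y_0=0\}$. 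Moreover every pencil contained in this net must share a common linear component in order for its base locus to contain four reduced points, and the only admissible common component is $y_0$, so every minimal apolar set is $P$ together with three points on $L_D$: this is (4b)(i). If instead $Z(f^\perp_2)=\emptyset$, then $f$ is not of the above normal form, so it has no type-(4b) apolar set and $\XX$ is of type (4c); moreover $f^\perp_2$ is base-point-free, so by Bertini the general conic in it is smooth and, imposing the passage through a general point $P$, one obtains a pencil whose base locus is a reduced set of $4$ points in general position, apolar to $f$ (complete intersection of two conics in $f^\perp_2$) and of minimal cardinality, whence $\caW_f$ is dense: this is (4c)(i).

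The main obstacle is precisely this last step. Cases (4a), (4d), and the subcases of (4b)--(4c) with $d\ge 4$ follow mechanically from Sylvester's description, Lemma~\ref{lemma: rank = essential}, and Lemma~\ref{lemma: regularity}; but the $d=3$ ternary cubics require separating the two behaviours of the net of conics $f^\perp_2$, recognizing the normal form $x_0^3+g(x_1,x_2)$ from the presence of a double line in the polar net, and---hardest of all---in the type-(4b) case carrying out the bookkeeping of pencils inside $f^\perp_2$ to show that \emph{every} minimal apolar set has the claimed shape, rather than merely exhibiting one such decomposition.
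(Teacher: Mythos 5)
Your treatment of (4a), (4d), and the $d\geq 4$ subcases of (4b) and (4c) is essentially the paper's own argument: Sylvester's description of binary forms, Lemma~\ref{lemma: rank = essential}, the normal form $f=x_0^d+g(x_1,x_2)$ via \cite[Lemma 1.12]{BBKT15}, and Lemma~\ref{lemma: regularity} applied to a minimal apolar set with $\rho(\XX)=2$ (the paper invokes Theorem~\ref{thm: regularity} for $d\geq 5$, you run the same regularity argument uniformly for $d\geq 4$; this is an equivalent bookkeeping). The genuine divergence is in the cubic cases (4b)(i) and (4c)(i): the paper simply cites the classification of Waring loci of plane cubics in \cite[Section 3]{CCO17}, whereas you give a self-contained analysis of the net of conics $f^\perp_2$ and its orthogonal net of polar conics, including the pencil bookkeeping showing that \emph{every} minimal apolar set has the shape $P\cup\XX'$ with $\XX'\subset L_D$. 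That part is sound (any two members of the net $\langle y_0y_1,y_0y_2,\mu^2\rangle$ without common component meet with multiplicity $\geq 2$ at the support of $D$, since all smooth members share the tangent $\{y_0=0\}$ there), though you should also say explicitly why the other possible common component $\mu$ is inadmissible: it would force four collinear apolar points, contradicting conciseness.

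The one step that is too quick is the claim that $Z(f^\perp_2)\neq\emptyset$ implies, ``by integration,'' that $f=x_0^3+g(x_1,x_2)$. If $\xi\in Z(f^\perp_2)$, then $\ell_\xi^2=v\circ f$ for some $v\in T_1$, and the integration you describe only works when $v\circ\ell_\xi\neq 0$; if $v\circ\ell_\xi=0$ one gets instead $f=x_0\ell^2+D(x_1,x_2)$ in suitable coordinates, which is not visibly of the stated normal form (for instance $x_1(x_0x_1+x_2^2)$, a conic plus tangent line, is of this shape and is \emph{not} of the form $\ell^3+g$; it is only excluded because its rank is $5$). So you must add the degenerate case: using $\rk(f)=4$, the forms $x_0\ell^2+D(x_1,x_2)$ either reduce again to the cuspidal normal form or have rank $\neq 4$ and are excluded. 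Note that for (4b)(i) itself the detour through $Z(f^\perp_2)$ is unnecessary --- the hypothesis that a type-(4b) set exists gives $f=c\ell_P^3+g(x_1,x_2)$ with $g$ binary of rank $3$ directly --- but your proof of (4c)(i) does use the implication ``$Z(f^\perp_2)\neq\emptyset\Rightarrow$ normal form $\Rightarrow$ all minimal sets are of type (4b),'' so the degenerate case must be closed for the argument to be complete. Two further small points: in (4a), at $d=6$ one has $d_1=d_2=4$ and $G_1$ may well be square-free, so the dichotomy is not ``$G_1$ not square-free for $d\leq 6$''; the stated conclusion (general element $HG_1+\alpha G_2$ with $H$ a scalar) still holds. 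And in (4c)(i), the Bertini density claim should be anchored, as in the paper's remark, by the observation that the pencil of net-conics through a point of the given type-(4c) apolar set has reduced base locus, so the reducedness condition is nonempty and hence generic in $P$.
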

	\begin{proof}
		{\it Case {\rm (4a)}}. It follows from Sylvester algorithm, Example \ref{example: sylvester}.
		
		{\it Case {\rm (4b)}}. We may assume that $f = x_0^d +
                g(x_1,x_2)$, where $g$ is a binary form of rank
                $3$. Then, since $d\geq 3$, by \cite[Lemma
                1.12]{BBKT15}, we have that $f^\perp_2 = (y_1,y_2)_2
                \cap g^\perp_2$. If $d = 3$, the claim follows from
                \cite[Section 3]{CCO17}. If $d \ge 4$, since $g$ is a
                binary quartic of rank $3$, we have $g^\perp =
                (y_0,G_1,G_2)$, where the $G_i$'s are cubics, and,
                therefore, $f^\perp_2 = (y_0y_1,y_0y_2)$ and $h_{f}(2)=4$. Observe that, any set $\XX$ of $4$ points, non-collinear in $\PP^2$, has $\rho(\XX) = 2$. Hence, by Lemma \ref{lemma: regularity}, we have that $(I_\XX)_2 = f^\perp_2$ and the claim follows. In the case $d \geq 5$, the claim follows from Theorem \ref{thm: regularity}.
	
	{\it Case {\rm (4c)}}. If $d = 3$, it follows from \cite[Section 3]{CCO17}. If $d = 4$, by Lemma \ref{lemma: regularity}, for any minimal set of points $\XX$ apolar to $f$, we have that $f^\perp_2 = (I_\XX)_2$. Since by assumption there exists a minimal set of points $\XX$ which is a complete intersection of two conics, $(f^\perp)_2$ defines the {\it unique} minimal apolar set of points of $f$. If $d \geq 5$, then the claim follows from Theorem \ref{thm: regularity}.
	
	{\it Case {\rm (4d)}}. It follows from Lemma \ref{lemma: rank = essential}.
	\end{proof}	
	
	\begin{remark}
	From this result, we have that, given a polynomial of rank $4$, {\it all} minimal apolar set of points fall within the same configuration. In other words, we obtain a stratification of the space of rank $4$ polynomials accordingly to the configuration of minimal apolar sets of points. Moreover, we obtain that Question \ref{question:A} has a positive answer for polynomials of rank $4$. In particular, any minimal apolar set of points of a given rank $4$ polynomial have one of the following Hilbert functions:
	$$
		{\rm (4a)} : ~~ 1 ~~ 2 ~~ 3 ~~ 4 ~~ 4 ~~ \cdots; \quad {\rm (4b),~(4c)}: ~~ 1 ~~ 3 ~~ 4 ~~ 4 ~~ \cdots; \quad {\rm (4d)}: ~~ 1 ~~ 4 ~~ 4 ~~ \cdots.
	$$
	Note that, as explained in \cite[Section 3B.2]{eisenbud_geometry_2005}, the cases (4b) and (4c) can be distinguished by looking at finer numerical invariants related to their resolution as their {\it graded Betti numbers}. In particular, we have:
	$$
		{\rm (4b)} : 
		\begin{tabular}{ccc}
					1 & $\cdot$ & $\cdot$ \\
					$\cdot$ & $2$ & $1$ \\
					$\cdot$ & $1$ & $1$
		\end{tabular}
		~~~~~
		{\rm (4c)} : 
		\begin{tabular}{ccc}
					1 & $\cdot$ & $\cdot$ \\
					$\cdot$ & $2$ & $\cdot$ \\
					$\cdot$ & $\cdot$ & $1$
		\end{tabular}
	$$
	\end{remark}
	
	\begin{remark}
	As we said, we want to do that {\it step-by-step}, but it is not enough to pick a point in the Waring locus because, in order to construct the decomposition, we should also find the suitable coefficient to put in front of the power of the corresponding linear form.
		
		In the case (4b), for $d = 3,4$, we have seen that the scheme defined by the degree $2$ part of the apolar ideal of $f$ has a unique reduced point $P$ and then a non-reduced part (for $d = 3$) or a $1$-dimensional part (for $d = 4$). In both cases, we consider the linear form $\ell_P$ having the coordinates of $P$ as coefficients and we reduce the rank of $f$ by finding the suitable coefficient $c$ such that $f - c\ell_P^d$ has two essential variables, i.e., such that the $1$st catalecticant matrix has rank $2$. 
		
		In the case (4c), for $d = 3$, we can chose a random point $P$ in $\PP^2$. Then, we need to find a suitable coefficient such that $f - c\ell_P^3$ has rank $3$. In order to do so, we need to use equations for the space of (the closure of) rank $3$ polynomials. These equations are very difficult to find and, in general, are not always known. A list of known cases is nicely explained in \cite{LO13}. In the case of plane cubics of rank $3$, we know that this is a hypersurface given by the so-called {\it Aronhold invariant}. We explain this in Section \ref{sec: Aronhold}. 
	\end{remark}
	
\subsection{Polynomials of rank $5$.}
Possible configurations of $5$ points in projective space are in Figure \ref{fig: 5 points}.	\begin{figure}[h]
	\begin{center}
				\subfigure[(5a) Collinear]{
			\includegraphics[scale=0.3]{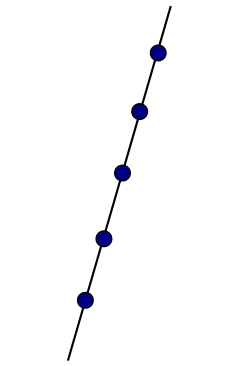}
		}
				\subfigure[(5b) Coplanar, with $4$ collinear.]{
		    \includegraphics[scale=0.28]{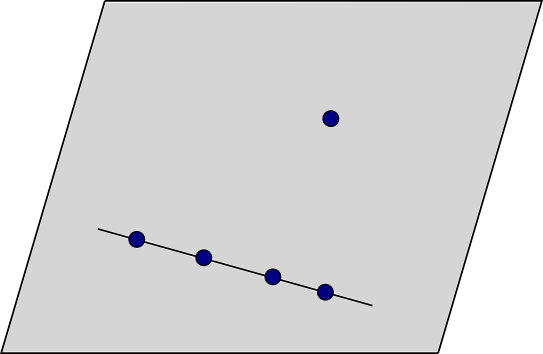}
		}
				\subfigure[(5c) Coplanar, on a unique conic]{
			\includegraphics[scale=0.28]{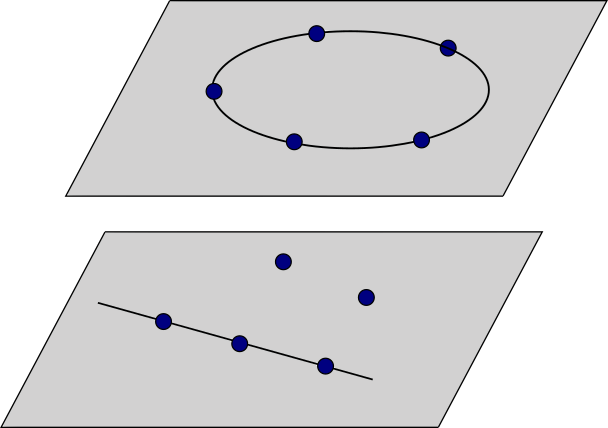}
		}	
				\subfigure[(5d) $4$ coplanar.]{
			\includegraphics[scale=0.28]{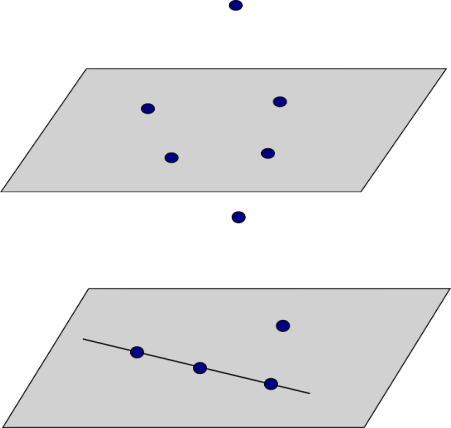}
		}	
			\subfigure[(5e) $5$ general in $\PP^3$.]{
			\includegraphics[scale=0.28]{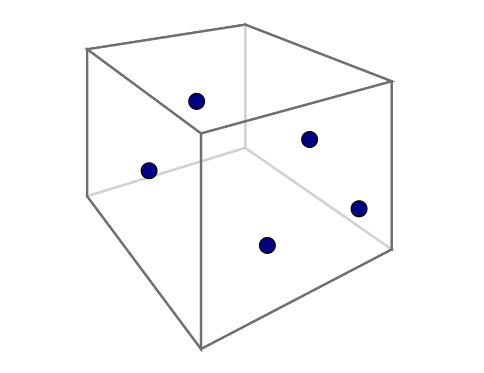}
		}	
			\subfigure[(5f) $5$ general in $\PP^4$.]{
			\includegraphics[scale=0.28]{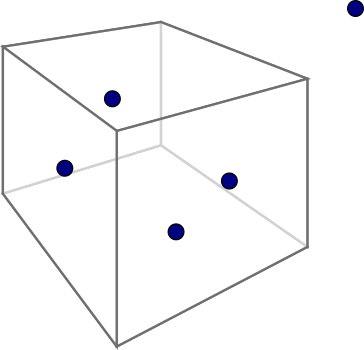}
		}	
		\end{center}
		\caption{Configurations of $5$ points in projective space.}\label{fig: 5 points}
	\end{figure}
	
	Since we have several cases to consider, we start with some preliminary lemma. In the first one, we study case (5b) in a more generality, by considering a set of $r$ points $\PP^2$ with $r-1$ collinear points.

	\begin{lemma}\label{lemma: general cusps rank5}
		Let $f = x_0^d + g(x_1,x_2) \in S_d$ of rank $\ge
                4$. Then, $\rk(f)=\rk(g)+1$ and we have that $\caW_f = (1:0:0) ~\cup~ \caW_g$. In other words, any minimal Waring decomposition of $f$ is given by the sum of $x_0^d$ and a minimal Waring decomposition of $g$, i.e., it is $x_0^d + \sum_{i=1}^r \ell_i^d(x_1,x_2)$, where $r = \rk(g)$. 
	\end{lemma}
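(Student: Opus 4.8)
The plan is to prove the sharper statement that every minimal apolar set $\XX$ of $f$ is contained in $\{P_0\}\cup\PP^1_{x_1,x_2}$, where $P_0=(1:0:\dots:0)$; both assertions of the lemma follow from this. Indeed, since $\rk(f)\ge4$ and $\rk(f)\le\rk(g)+1$ (append $x_0^d$ to a minimal Waring decomposition of $g$), we get $\rk(g)\ge3$, so $g$ has two essential variables and $f$ has exactly three essential variables $x_0,x_1,x_2$; by Lemma~\ref{lemma: essential var} any minimal apolar $\XX$ spans the plane $\PP^2$ of essential variables, so it is not collinear, whence (granting the sharper statement) $P_0\in\XX$ and $\XX':=\XX\setminus\{P_0\}\subseteq\PP^1_{x_1,x_2}$. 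Restricting $f=\sum_{P\in\XX}c_P\ell_P^d$ to $\{x_0=0\}$ gives $g=\sum_{P\in\XX'}c_P\ell_P^d$, so $\rk(g)\le|\XX'|=\rk(f)-1\le\rk(g)$; hence $\rk(f)=\rk(g)+1$ with $\XX'$ minimal for $g$, and, using also that $\{P_0\}\cup\XX'$ is apolar to $f$ of minimal cardinality for any minimal apolar $\XX'$ of $g$, we get $\caW_f=\{P_0\}\cup\caW_g$.

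For the low-degree structure of $f^\perp$: write the apolar ideal of the binary form $g$ as $(G_1,G_2)$ with $\deg G_i=d_i$, $d_1\le d_2$, $d_1+d_2=d+2$, $d_1\ge2$; by \cite[Lemma 1.12]{BBKT15}, $f^\perp_i=((y_0^d)^\perp)_i\cap g^\perp_i$ for $i\le d-1$, hence $\HF_{A_f}(i)=1+\HF_{A_g}(i)$ for $1\le i\le d-1$. In degree $2$ this gives $f^\perp_2=\langle y_0y_1,y_0y_2\rangle$ when $d_1\ge3$, and $f^\perp_2=\langle y_0y_1,y_0y_2,G_1\rangle$ (a $3$-dimensional space) when $d_1=2$; in the latter case $\rk(g)=d$ and $G_1$ is the square of a linear form, and after a coordinate change in $x_1,x_2$ (which fixes $P_0$ and $\PP^1_{x_1,x_2}$) we may assume $G_1=y_1^2$.

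Now fix a minimal apolar $\XX$; recall $\rho(\XX)\le|\XX|-2$ by non-collinearity. If $d_1\ge3$ the argument is immediate: $\rho(\XX)\le\rk(f)-2\le\rk(g)-1\le d_2-1\le d-2$, so Lemma~\ref{lemma: regularity} gives $(I_\XX)_2=f^\perp_2=\langle y_0y_1,y_0y_2\rangle$, whence $\XX\subseteq Z(y_0y_1,y_0y_2)=\{P_0\}\cup\PP^1_{x_1,x_2}$. The delicate case is $d_1=2$ ($\rk(g)=d$), where $\ell(f)=1+\HF_{A_g}(2)=3<\rk(f)$ so the catalecticant no longer detects a minimal apolar set; this is the heart of the proof. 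If $\rho(\XX)\le d-2$ then Lemma~\ref{lemma: regularity} would force $(I_\XX)_2=f^\perp_2$, so $\HF_\XX(2)=3=\HF_\XX(1)$, contradicting the strict growth of $\HF_\XX$ below $|\XX|\ge4$ (Remark~\ref{rmk: HF points}); hence $\rho(\XX)\ge d-1$, and with $\rho(\XX)\le|\XX|-2$ and $|\XX|=\rk(f)\le\rk(g)+1=d+1$ we obtain $\rk(f)=d+1=\rk(g)+1$ and $\rho(\XX)=|\XX|-2$. A short count (the first difference of $\HF_\XX$ equals $1,2$ in degrees $0,1$, is $\ge1$ up to degree $\rho(\XX)=|\XX|-2$, vanishes afterwards, and has total $|\XX|$) then forces $\HF_\XX=(1,3,4,5,\dots,d+1)$, so $\dim(I_\XX)_2=2$: $\XX$ lies on a pencil of conics inside $f^\perp_2=\langle y_0y_1,y_0y_2,y_1^2\rangle$. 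For $d\ge4$, $|\XX|=d+1\ge5>4$, so by Bézout this pencil has a fixed line $L$, and then $(I_\XX)_2$ consists of the products of $L$ with the lines through a point $Q$, giving $\XX\subseteq\{L=0\}\cup\{Q\}$ with $\XX\setminus\{Q\}\subseteq\{L=0\}$ and $Q\notin\{L=0\}$. A conic $y_0(ay_1+by_2)+cy_1^2$ in $f^\perp_2$ is reducible iff $bc=0$, with factorizations $y_0\cdot(ay_1+by_2)$ or $y_1\cdot(ay_0+cy_1)$; hence a $2$-dimensional family of reducible conics in $f^\perp_2$ is either $y_0\cdot\langle y_1,y_2\rangle$ — forcing $L=\{x_0=0\}=\PP^1_{x_1,x_2}$ and $Q=P_0$ — or $y_1\cdot\langle y_0,y_1\rangle$, forcing $Q=(0:0:1)\in\{L=0\}$, which is impossible. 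Thus $\XX\subseteq\{P_0\}\cup\PP^1_{x_1,x_2}$. The remaining value $d=3$, where $f=x_0^3+g$ with $g$ a binary cubic of rank $3$ is a cuspidal plane cubic, is contained in \cite[Section 3]{CCO17} (cf. case (4b)(i) of Theorem~\ref{prop: rank 4}).

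The main obstacle is the case $d_1=2$: there $\ell(f)<\rk(f)$, so a minimal apolar set cannot be read off the low-degree part of $f^\perp$ as Theorem~\ref{thm: regularity} or Lemma~\ref{lemma: regularity} alone would permit, and one must bootstrap from the regularity bound, the non-collinearity of minimal apolar sets of a concise form, an elementary analysis of Hilbert functions of points in $\PP^2$, and a Bézout argument on the resulting pencil of conics, treating $d=3$ separately via \cite{CCO17}.
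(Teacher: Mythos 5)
Your proof is correct, and in the decisive case it takes a genuinely different route from the paper. For $\rk(g)\le d-1$ (your case $d_1\ge 3$) the two arguments coincide: $f^\perp_2=\langle y_0y_1,y_0y_2\rangle$, a regularity bound on a minimal apolar set $\XX$, and Lemma~\ref{lemma: regularity} force $(I_\XX)_2=f^\perp_2$, hence $\XX\subseteq\{(1:0:0)\}\cup\PP^1_{x_1,x_2}$; your bound $\rho(\XX)\le|\XX|-2$ via non-collinearity is in fact a cleaner way to get this than the paper's statement, which is phrased only for the configuration of $r$ collinear points plus one. The divergence is in the case $\rk(g)=d$, i.e.\ $g\sim x_1x_2^{d-1}$, where $\ell(f)=3<\rk(f)$ and the catalecticant/regularity mechanism alone cannot locate $\XX$: the paper disposes of this case in one line by citing the Waring-locus computation of \cite[Theorem 5.1]{CCO17}, whereas you re-derive it for $d\ge4$ by a self-contained bootstrap --- the regularity dichotomy forcing $\rho(\XX)=|\XX|-2=d-1$ and $\rk(f)=d+1$, the Hilbert-function count giving $\dim(I_\XX)_2=2$, a B\'ezout argument producing the fixed line of the pencil, and the classification of two-dimensional spaces of reducible conics inside $\langle y_0y_1,y_0y_2,y_1^2\rangle$ --- citing \cite{CCO17} only for the cuspidal cubic $d=3$. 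The paper's citation buys brevity; your argument buys near self-containedness, isolates exactly why the maximal-rank-$g$ case is the delicate one, and also spells out the final deduction of $\rk(f)=\rk(g)+1$ and $\caW_f=\{(1:0:0)\}\cup\caW_g$ from the containment $\XX\subseteq\{(1:0:0)\}\cup\PP^1_{x_1,x_2}$, a step the paper leaves implicit.
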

	\begin{proof}
	Since binary forms of degree $d$ have rank at most $d$, we have that $r =\rk(g) \leq d$. 
	
	Now, by \cite[Lemma 1.12]{BBKT15}, we have $f^\perp_i = (y_1,y_2)_i \cap g^\perp_i$, for $i \leq d-1$. If $d = r$, we have that $g$ is a binary form of maximal rank $r$ which, up to a change of variables, can be written in the form $x_1x_2^{r-1}$. Hence, the claim follows from \cite[Theorem 5.1]{CCO17}. If $d \geq r+1$, then $g^\perp = (y_0,G_1,G_2)$ where $G_1$ has degree $r$ and $G_2$ has degree $d+2-r$. Since $r\geq 3$ and $d\geq r+1$, we have that the $G_i$'s have degree at least $3$. In particular, $f^\perp_2 = (y_0y_1,y_0y_2)$. Now, observe that any set of $r+1$ points  in $\PP^2$ with a subset of $r$ collinear points, have $\rho(\XX) \leq r-1$. Hence, by Lemma \ref{lemma: regularity}, for any minimal set of points $\XX$ apolar to $f$, we have $(I_{\XX})_i = f_i^\perp$, for $i \leq d-\rho(\XX)$. In particular, since $d-\rho(\XX) \geq d-r+1 \geq 2$, we have $(I_\XX)_2 = f^\perp_2 = (y_0y_1,y_0y_2)$. This concludes the proof.
	\end{proof}
	
	In the next lemma, we consider the case (5c) in the case of plane quartics.
	
	\begin{lemma}\label{lemma: plane quartics rank 5}
		Assume that $f$ is a plane quartic such that
                $\rk(f)=5$ and $f^\perp_2 = \langle C\rangle$. Then:
		\begin{enumerate}
			\item if $C$ is irreducible, then $\caW_f$ is dense in the conic $Z(C)$;
			\item if $C$ is reducible, say $C = L_1\cup L_2$, let $Q = L_1 \cap L_2  = Z(\ell_1)\cap Z(\ell_2)$; then:
			\begin{enumerate}
				\item if $Q$ {\it is not} a forbidden point for $f$, then $\caW_f$ is dense in $Z(C)$;
				\item if $Q$ {\it is} forbidden point for $f$, then, for either $i = 1$ or $i = 2$,  $\caW_f \cap Z(\ell_i)$ is dense in $Z(\ell_i)$.
			\end{enumerate}
		\end{enumerate}
	\end{lemma}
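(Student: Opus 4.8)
The starting point is that $f^\perp_2 = \langle C\rangle$ forces $\HF_{A_f}(2) = 5$, so by symmetry of the Hilbert function of the Artinian Gorenstein algebra $A_f$ of socle degree $4$ (and since $\HF_{A_f}(1)\le 3$, while $\HF_{A_f}(1)=2$ would already contradict $\HF_{A_f}(2)=5$) we get $\HF_{A_f} = (1,3,5,3,1)$; in particular $f$ is concise. If $\XX$ is a minimal apolar set, then $|\XX| = 5$ and $(I_\XX)_2 \subseteq f^\perp_2 = \langle C\rangle$, so $(I_\XX)_2$ is $0$ or $\langle C\rangle$; the first case would give $\HF_\XX(2) = 6 > 5 = |\XX|$, which is impossible. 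Hence $(I_\XX)_2 = \langle C\rangle$: \emph{every} minimal decomposition of $f$ is supported on the conic $Z(C)$. Moreover $\HF_\XX(2) = 5 = |\XX|$ gives $\rho(\XX) = 2$, so $\reg(I_\XX) = 3$ and $I_\XX$ is generated in degrees $\le 3$; therefore the condition $I_\XX \subseteq f^\perp$ collapses to the single requirement $(I_\XX)_3 \subseteq f^\perp_3$ (degree $\le 2$ being automatic, degrees $\ge 4$ following from the generation statement).

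For case (1), with $C$ irreducible, $Z(C)$ is a smooth conic, i.e. $\nu_2(\PP^1)$, and the coordinate ring $T/(C)$ is the second Veronese subring of $R := \CC[u,v]$, with degree-$k$ part $R_{2k}$. By Macaulay's duality applied in $R$, the algebra $A_f \cong (T/(C))/(f^\perp/(C))$ is the apolar algebra of a binary form $\Phi$ of degree $8$, in the precise sense that $(f^\perp/(C))_k = (\Phi^\perp)_{2k}$ inside $R_{2k}$; the relation $\HF_{A_f}(k) = \HF_{A_\Phi}(2k)$ together with $\HF_{A_f}(2) = 5$ forces $\Phi^\perp = (A_1,A_2)$ with $\deg A_1 = \deg A_2 = 5$. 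Pulling the points of $\XX$ back along $\nu_2$ gives five points of $\PP^1$ with square-free defining form $p \in R_5$, and the reduced degree-$3$ condition above becomes $p\cdot R_1 \subseteq (\Phi^\perp)_6$; since $A_\Phi$ is Gorenstein one has $(\Phi^\perp : (u,v))_5 = (\Phi^\perp)_5 = \langle A_1,A_2\rangle$, so this is equivalent to $p \in \langle A_1,A_2\rangle$. Thus, via $\nu_2$, minimal apolar sets of $f$ on $Z(C)$ correspond exactly to minimal Waring decompositions of the binary octic $\Phi$. As $A_1, A_2$ form a regular sequence they are coprime, so by Sylvester's algorithm (Example~\ref{example: sylvester}) $\rk(\Phi) = 5$ and a general member of the pencil $\langle A_1,A_2\rangle$ is square-free; the pencil being base-point-free, the unique member through a general point of $\PP^1$ is square-free, so $\caW_\Phi$ is dense in $\PP^1$. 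Transporting along $\nu_2$ gives that $\caW_f$ is dense in $Z(C)$.

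For case (2), with $C = L_1\cup L_2$ reducible, choose coordinates so that $C = y_1 y_2$; then $Q = (1:0:0)$ and, since $y_1 y_2 \circ f = 0$, we may write $f = a_0 x_0^4 + A(x_0,x_1) + B(x_0,x_2)$ with $A, B$ having no $x_0^4$ term. Set $\Phi_2 := f|_{x_2=0} = a_0 x_0^4 + A$ and $\Phi_1 := f|_{x_1=0} = a_0 x_0^4 + B$. From the first paragraph any minimal $\XX$ lies on $L_1\cup L_2$ with at most $3$ of its points on each line (a line carrying $\ge 4$ would force $\dim(I_\XX)_2 \ge 2$), and an elementary count shows $\XX$ has one of two shapes: "$3$ points on one line and $2$ on the other, none equal to $Q$'', or "$Q$ together with $2$ more points on each line''. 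Restricting a decomposition $f = \sum_i \ell_i^4$ with $[\ell_i]\in Z(C)$ to $\{x_2=0\}$ sends the summands coming from $L_1$-points to multiples of $x_0^4$ and fixes the others; this yields the bookkeeping: a decomposition of the first shape with $3$ points on $L_1$ amounts to a rank-$\le 3$ decomposition of $\Phi_1 - \gamma x_0^4$ together with a rank-$\le 2$ decomposition of $\Phi_2 - (a_0-\gamma)x_0^4$ for a suitable scalar $\gamma$ (the two $x_0^4$-coefficient identities being automatic), and one of the second shape amounts to $\Phi_1$ and $\Phi_2$ both becoming rank $\le 2$ after subtracting suitable multiples of $x_0^4$. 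Now, since $\cat_2(x_0^4)$ has rank $1$, the determinant $\det\cat_2(h - \delta x_0^4)$ of a binary quartic $h$ is affine in $\delta$, so there is a unique $\delta$ where the rank of $h - \delta x_0^4$ drops below $3$; using $\rk(f)=5$ one checks that whenever a configuration above is realised by distinct points the relevant form has rank exactly $2$ (resp. exactly $3$), and that the remaining degenerate possibilities are incompatible with $\rk(f)=5$. Consequently: if both $\Phi_1$ and $\Phi_2$ admit such a rank-$2$ reduction, there is a minimal apolar set through $Q$ (so $Q\notin\caF_f$), both splits of the first shape occur, and in each the rank-$3$ side $\Phi_i - \gamma_0 x_0^4$ has apolar ideal $(G_1,G_2)$ with $\deg G_1 = \deg G_2 = 3$ and $G_1,G_2$ coprime, so the $3$ points on $L_i$ sweep out the roots of the members of the base-point-free pencil $\langle G_1,G_2\rangle$ and cover $L_i$ densely; hence $\caW_f$ is dense in $Z(C)=L_1\cup L_2$, which is alternative (2a). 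If only one of them, say $\Phi_2$, admits such a reduction (the hypothesis $\rk(f)=5$ rules out "neither''), then only the split with $3$ on $L_1$ occurs, $Q$ is forbidden, and the same pencil argument shows $\caW_f\cap Z(\ell_1)$ is dense in $Z(\ell_1)$ (while $\caW_f\cap Z(\ell_2)$ consists of two fixed points), which is alternative (2b).

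The delicate part is case (2): making the "restriction to a coordinate line'' bookkeeping rigorous — in particular checking that the $x_0^4$-coefficient constraints are automatically satisfied and that the two listed shapes exhaust the possibilities — and, above all, pinning down the equivalence between "$Q$ is (not) a forbidden point'' and the existence of the rank-$2$ reductions of $\Phi_1$ and $\Phi_2$, which requires disposing of the borderline configurations where a catalecticant minor vanishes by showing they are incompatible with $\rk(f)=5$. A secondary technical point is establishing cleanly the correspondence $A_f \leftrightarrow \Phi$ in case (1), i.e. the identity $(f^\perp/(C))_k = (\Phi^\perp)_{2k}$ and the Gorenstein equality $(\Phi^\perp:(u,v))_5 = (\Phi^\perp)_5$, so that Sylvester's algorithm can be invoked.
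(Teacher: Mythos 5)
Your case (1) is correct, but it follows a genuinely different route from the paper's. The paper argues directly with catalecticants: for $P\in Z(C)$ it produces the unique scalar $\lambda$ for which $\cat_2(f+\lambda\ell_P^4)$ drops to rank $4$, obtaining a second conic $C'$ with $(f+\lambda\ell_P^4)^\perp_2=\langle C,C'\rangle$, and then shows the open locus of $P$ where $C$ and $C'$ meet transversally is non-empty by taking $P$ inside one known minimal apolar set. You instead transport everything to a binary octic $\Phi$ via the second Veronese, using your (correct) preliminary observations that every minimal $\XX$ has $(I_\XX)_2=\langle C\rangle$, $\rho(\XX)=2$, and that $I_\XX\subset f^\perp$ reduces to the degree-$3$ inclusion; the Gorenstein colon identity then gives an exact dictionary between minimal apolar sets of $f$ and square-free quintics in the base-point-free pencil $(\Phi^\perp)_5=\langle A_1,A_2\rangle$, whence density. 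This is complete (the Macaulay-duality step for the quotient $T/(C)$ that you flag is standard and can also be obtained by noting that $f$ lies in the span of $\{\ell_\xi^4:\xi\in Z(C)\}$, a rational normal octic curve), and it buys more than the paper's argument: a description of \emph{all} minimal apolar sets of $f$, not just density of $\caW_f$.

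Case (2) is where there is a genuine gap, which you yourself flag. Your shape analysis and restriction bookkeeping are fine, but you then try to \emph{characterize} forbidden-ness of $Q$ by the existence of rank-$2$ reductions of $\Phi_1,\Phi_2$, leaning on the uniqueness of the root of the affine function $\delta\mapsto\det\cat_2(h-\delta x_0^4)$ and on the unproved assertion that the degenerate possibilities (determinant constant or identically zero in $\delta$, the "rank-$\le 3$'' side having rank $4$, decompositions passing through $Q$, etc.) are "incompatible with $\rk(f)=5$''. As written these eliminations are not carried out, and they are exactly the delicate content. The point you are missing is that none of this is needed: the lemma is conditional on whether $Q$ is forbidden, and that hypothesis already hands you a minimal apolar set of the corresponding shape from your own exhaustive list ($Q+2+2$, or $3+2$ avoiding $Q$). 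Reading the scalars off that concrete decomposition (rather than off a determinant) writes $f=f_1+f_2+\ell_Q^4$ (resp. $f=f_1+f_2$) with $f_1+\ell_Q^4$ (resp. $f_1$) a binary quartic on the relevant line of rank exactly $3$ -- at most $3$ by the exhibited decomposition, at least $3$ since otherwise $\rk(f)\le 4$ -- and the density of the Waring locus of a rank-$3$ binary quartic (your pencil-of-apolar-cubics argument, or \cite[Theorem 3.5]{CCO17}) immediately gives (2a) on both lines and (2b) on one line. This is precisely the paper's proof of case (2); your version becomes correct once the rank-$2$-reduction characterization and the borderline-case claims are either proved or, better, dropped in favour of using the hypothesis directly.
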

	\begin{proof}
		For any minimal set of points $\XX$ apolar to $f$, since by assumption we have that $\HF_{f}(2) = 5$, we have that $(I_{\XX})_2 = f^\perp_2 = \langle C \rangle$. Hence, we have that $\caW_f \subset Z(C)$.
		
		Let $\{U_1,\ldots,U_5\}$ be interpolation polynomials of $\XX$. Then, with respect to the basis $\langle U_1,\ldots,U_5,C \rangle$ of $T_2$, 
$$
	\cat_2(f) = \begin{bmatrix}
		I_5 & 0 \\
		* & 0 \\
	\end{bmatrix}.
$$
	For any point $P\in Z(C)$, the catalecticant matrix $\cat_2(\ell_P^d)$, with respect to the same basis as above, has the last column equal to $0$. We write
	$$
	\cat_2(\ell_P^4) = \begin{bmatrix}
		M_f & 0 \\
		* & 0 \\
	\end{bmatrix}.
$$
	Now, since $M_f$ is a rank $1$ symmetric matrix, there exists a (unique) non-zero eigenvalue $\lambda$ such that $\det({\rm Id} - \lambda M_f) = 0$. In particular, for such a $\lambda$, we have that $\cat_2(f+\lambda \ell_P^d)$ has rank $4$. Hence, we have a second conic $C'$ such that $(f+\lambda \ell_P^d)^\perp_2 = \langle C,C' \rangle$. Moreover, since the coefficients of $C'$ are rational polynomial functions in the coordinates of $P$, there exists a Zariski open subset $\calU$ in $Z(C)$, for which the conics $C$ and $C'$ meet transversally. We need to show when (and where) this open set is non-empty.
	
	\smallskip	(1) {\it If $C$ is irreducible, then $\calU$ is non-empty.} We know that there exists at least one minimal apolar set of points $\XX$ for $f$. This is a set of $5$ points lying on the irreducible conic $Z(C)$. In particular, if we assume $P$ to be one of these points, we have that $\XX \setminus \{P\}$ is a complete intersection of two conics. In particular, $P \in \calU$.
	
	\smallskip (2-a) {\it Let $C = L_1\cup L_2$, $Q = L_1 \cap L_2 = Z(\ell_1)
          \cap Z(\ell_2)$ and $Q$ is not a forbidden point for $f$.} By assumption, there exists a minimal apolar set of points $\XX$ for $f$ which includes the point $Q$.  By using this set of points, we can write $f = f_1 + f_2 + \ell_Q^4$, where $f'_1 = f_1+\ell_Q^4$ is a rank $3$ quartic in the two essential variables of the line $Z(\ell_1)$. By \cite[Theorem 3.5]{CCO17}, we know that $\caW_{f'_1}$ is dense in $Z(\ell_1)$. Since $\caW_{f'_1}\subset \caW_{f}$, we conclude that $\caW_f$ is dense in $Z(\ell_1)$. By proceeding in the same way with $f'_2 = f_2 + \ell_Q^4$, we conclude.
	
	\smallskip (2-b) {\it Let $C = \ell_1\ell_2$, $Q = Z(\ell_1) \cap Z(\ell_2)$ and $Q$ is a forbidden point for $f$.} By assumption, there exists a minimal apolar set of points $\XX$ which, since $C$ is reducible, splits as the union of three point on a line, say $Z(\ell_1)$, and two points on the other. Following a similar idea as above, we can write $f = f_1 + f_2$ where $f_1$ is a quartic in the two essential variables of the line $Z(\ell_1)$ of rank $3$. By \cite[Theorem 3.5]{CCO17}, we know that $\caW_{f_1}$ is dense in $Z(\ell_1)$ and this concludes the proof.
	\end{proof}

Now, we consider the case of cubics and quartics with four essential variables. 

\begin{lemma}\label{lemma: rank 5, deg 3, coplanar}
	Let $f$ be a \Alessandro{cubic of rank $5$ with four essential variables, i.e., $h_{f}(1)=4$,} and let $\XX$ be a minimal set of points apolar to $f$.
        Then:
	\begin{enumerate}
		\item if all but one point of $\XX$ are coplanar, no
                  three of them colinear, then there exists a unique
                  ternary cubic $f'$ and $c\neq 0$ such that $f = c\, x_0^3 +
                  f'(x_1,x_2,x_3)$, $\rk(f')=\rk(f)-1$, and any minimal Waring decomposition of $f$ is obtained from a minimal decomposition of $f'$, i.e., $\caW_f = (1:0:0:0) \cup \caW_{f'}$;
		\item if all but two points of $\XX$ are collinear
                  then $f = x_0^3 + x_1^3 + g(x_2,x_3)$, where $g$ is
                  a binary cubic of rank $3$, and any minimal Waring decomposition of $f$ is obtained from a minimal decomposition of $g$, i.e., $\caW_f = (1:0:0:0) \cup (0:1:0:0) \cup \caW_g$;
		\item otherwise, $f$ has a unique decomposition.
	\end{enumerate}
\end{lemma}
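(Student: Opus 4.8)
The plan is to treat the three configurations of the minimal apolar set $\XX$ separately, in each case exploiting the explicit shape of $f^\perp_2$ that is forced by the existence of such an $\XX$. Common preliminaries: since $h_f(1)=4$ we may assume $f\in\CC[x_0,x_1,x_2,x_3]$ is concise; then $A_f$ is Artinian Gorenstein with socle degree $3$, so $h_f=(1,4,4,1)$, $f^\perp_1=0$ and $\dim_\CC f^\perp_2=\binom{5}{2}-4=6$. A minimal apolar set $\XX$ consists of $5$ reduced points spanning $\PP^3$, so by Remark \ref{rmk: HF points} $h_\XX=(1,4,5,5,\dots)$, hence $\rho(\XX)=2$ and $\dim_\CC(I_\XX)_2=5$, i.e.\ $(I_\XX)_2$ is a hyperplane in $f^\perp_2$. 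The three items of the statement are exactly the three configurations of a $5$-tuple spanning $\PP^3$: (1) four points coplanar with no three collinear; (2) three points collinear; (3) no three collinear and no four coplanar.

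For case (1), choose coordinates so that the four coplanar points of $\XX$ lie in $\{x_0=0\}$ and the fifth is $(1{:}0{:}0{:}0)$; then $f=c\,x_0^3+f'$ with $f'\in\CC[x_1,x_2,x_3]$ the sum of the remaining four cubes, and since $\rk(f)\le\rk(f')+1\le 5$ we get $\rk(f')=4$, with a minimal apolar set with no three collinear, so $f'$ is a plane cubic of type (4c) (Theorem \ref{prop: rank 4}), whence $Z(f'^\perp_2)=\emptyset$ in $\PP^2$. By \cite[Lemma 1.12]{BBKT15}, $f^\perp_2=\langle y_0y_1,y_0y_2,y_0y_3\rangle\oplus N$ with $N=(f'^\perp\cap\CC[y_1,y_2,y_3])_2$ a base-point-free net of conics. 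Now let $\YY$ be an arbitrary minimal apolar set of $f$. Since $(I_\YY)_2\subseteq f^\perp_2$ has dimension $5>3=\dim N$, the space $(I_\YY)_2\cap\langle y_0y_1,y_0y_2,y_0y_3\rangle$ is at least $2$-dimensional; choosing independent elements $y_0L_1,y_0L_2$ in it (or, if it is all of $\langle y_0y_1,y_0y_2,y_0y_3\rangle$, arguing directly), every point of $\YY$ lies in $Z(y_0L_1)\cap Z(y_0L_2)=\{x_0=0\}\cup M$, where $M=Z(L_1)\cap Z(L_2)$ is a line through $(1{:}0{:}0{:}0)$. As $\YY$ spans $\PP^3$, not all of $\YY$ lies in $\{x_0=0\}$; a short case distinction on the number $b$ of points of $\YY$ lying on $M$ outside $\{x_0=0\}$ then gives $b=1$: indeed for $b\ge 2$, after normalising $M=\{x_2=x_3=0\}$, the contribution of those points to $f$ is a binary cubic $B(x_0,x_1)$, and matching with $f=c\,x_0^3+f'$ forces $f'=\beta x_1^3+(\text{sum of }\le 2\text{ cubes in }x_1,x_2,x_3)$, contradicting $\rk(f')=4$. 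When $b=1$, comparing the $x_0$-parts of the two expressions of $f$ forces the unique point of $\YY$ outside $\{x_0=0\}$ to be $(1{:}0{:}0{:}0)$ itself, and the same comparison shows the other four points of $\YY$ lie in $\{x_0=0\}$ and form a minimal apolar set of $f'$. Hence $\caW_f=(1{:}0{:}0{:}0)\cup\caW_{f'}$ and $\rk(f')=\rk(f)-1$.

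For case (2), choose coordinates so that the line through the three collinear points of $\XX$ is $\{x_0=x_1=0\}$ and the other two points are $(1{:}0{:}0{:}0)$, $(0{:}1{:}0{:}0)$; then $f=c_0x_0^3+c_1x_1^3+g(x_2,x_3)$, and $\rk(g)=3$ (it is at most $3$; if at most $2$ then $\rk(f)\le 4$), so up to coordinates $g=x_2x_3^2$, and iterating \cite[Lemma 1.12]{BBKT15} gives $f^\perp_2=\langle y_0y_1,y_0y_2,y_0y_3,y_1y_2,y_1y_3,y_2^2\rangle$. Running the argument of case (1) with the subspace $\langle y_0y_1,y_0y_2,y_0y_3\rangle$, and using that the residual cubic $h:=c_1x_1^3+g(x_2,x_3)$ has rank $4$, forces $(1{:}0{:}0{:}0)\in\YY$ and shows $\YY\setminus\{(1{:}0{:}0{:}0)\}$ is a minimal apolar set of $h$; since $h$ is an ($x_1^3$)-term plus a binary form of rank $\ge 4$, Lemma \ref{lemma: general cusps rank5} (with $x_1$ in the role of $x_0$) gives $\caW_h=(0{:}1{:}0{:}0)\cup\caW_g$. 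Combining the two splittings, $\caW_f=(1{:}0{:}0{:}0)\cup(0{:}1{:}0{:}0)\cup\caW_g$, and $\rk(f)=\rk(g)+2=5$.

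For case (3), $\XX$ is a projective frame, so after a change of variables $f=\sum_{i=0}^3 c_ix_i^3+c_4(x_0+x_1+x_2+x_3)^3$ with all $c_i\neq0$; a direct computation of $f^\perp_2$ in these coordinates shows $Z(f^\perp_2)=\emptyset$. Suppose $\YY\neq\XX$ is a second minimal apolar set. Subtracting the two Waring decompositions of $f$ produces a nontrivial linear dependence among the cubes $\{\ell_r^3 : r\in\XX\cup\YY\}$, so the $\ge 6$ points of $\XX\cup\YY$ fail to impose independent conditions on cubics; since $\XX$ is a frame (no three collinear, no four coplanar) this is incompatible with the classification of small configurations of points in $\PP^3$ that are special for cubics — in particular with the fact that such a failure would require five of the points to be collinear — and one gets a contradiction, so $\YY=\XX$ and $f$ is identifiable. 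I expect this uniqueness to be the main obstacle; an alternative route is to invoke Sylvester's pentahedral theorem \cite{Cle61}, observing that a cubic in $\PP^3$ admitting a frame among its apolar sets lies in the (open) identifiability locus, or to note that $Z(f^\perp_2)=\emptyset$ makes the projected $2$-Veronese a finite morphism $\PP^3\to\PP^5$ whose fibres parametrise the apolar $5$-sets, and to bound those fibres. Everything outside case (3) is essentially bookkeeping: the decisive point is the confinement of $\YY$ to $\{x_0=0\}\cup(\text{a line through }(1{:}0{:}0{:}0))$ coming from the $2$-dimensional subspace of products $y_0\cdot(\text{linear})$ inside $f^\perp_2$, after which the only delicate step is the short count of how many points of $\YY$ can sit on that line, controlled by $\rk(f')=4$.
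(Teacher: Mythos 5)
Your treatment of case (3) is the genuine gap. The primary argument --- subtract the two decompositions to get a nontrivial linear relation among the cubes of the points of $\XX\cup\XX'$, hence a failure of these at most ten points to impose independent conditions on cubics, and then conclude because ``such a failure would require five of the points to be collinear'' --- does not work: that collinearity statement is the classical lemma for configurations of at most $2d+1$ points, i.e.\ at most $7$ points for $d=3$, while here the union may have up to $10$ points, and for larger configurations the claim is simply false (for instance, $8$ points on a smooth plane conic in $\PP^3$ fail to impose independent conditions on cubics and have no three collinear; such a configuration is not even excluded a priori for $\XX\cup\XX'$, since only ``no four coplanar'' is known for $\XX$). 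Your fallback is essentially to cite Sylvester's pentahedral theorem, which is exactly what the paper does (via \cite[Theorem 9.4.1]{Dol12}), but the bridge you propose --- that a cubic admitting an apolar frame ``lies in the (open) identifiability locus'' --- is not a proof: the statement actually needed, namely that \emph{every} quaternary cubic admitting an apolar set of five points in linearly general position has no other minimal apolar set, is the precise content of that theorem, not a consequence of an openness property you do not establish; the finite-morphism suggestion is likewise not carried out. So (3) remains unproved in your write-up, and note the paper leans on the same theorem again inside its proof of (1), to force every minimal apolar set to contain four coplanar points.

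For cases (1) and (2) your route is genuinely different from the paper's and is essentially sound: instead of invoking the pentahedral theorem and the zero locus of $f^\perp_2$ followed by the rank condition on $\cat_1(f-c\ell^3)$, you confine an arbitrary minimal apolar set $\XX'$ to $\{x_0=0\}\cup M$ via the at least two-dimensional space of quadrics of the form $y_0\cdot(\mathrm{linear})$ inside $(I_{\XX'})_2$, and then count points on $M$. However, your exclusion of $b\ge 2$ has a counting slip: for $b=2$ the residual is $\beta x_1^3$ plus a sum of \emph{three} (not two) cubes, giving only $\rk(f')\le 4$ and no contradiction. The case $b=2$ must be handled separately, e.g.\ by observing that the binary cubic contributed by the two points of $M$ off $\{x_0=0\}$ must equal $c\,x_0^3+\beta x_1^3$, and uniqueness of rank-two decompositions of binary cubics forces those two points to be $[x_0]$ and $[x_1]$, which is impossible since both avoid $\{x_0=0\}$ and are distinct; the same repair is needed when you rerun the argument in case (2). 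There, also, the phrase ``an $x_1^3$-term plus a binary form of rank $\ge 4$'' is misstated ($g$ has rank $3$); what Lemma \ref{lemma: general cusps rank5} requires, and what you do have, is that the residual cubic $h$ itself has rank $4$. With these repairs (1) and (2) stand; the unresolved point is (3).
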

\begin{proof} Let $\XX$ be a minimal set of apolar points of $f$
  (with $|\XX|=5$).
  
If $\XX$ has not a subset of four coplanar
points, point (3) follows from the classical Sylvester's Pentahedral Theorem. We refer to \cite[Theorem 9.4.1]{Dol12} for a modern proof.
	
If all but one point of $\XX$ are coplanar points, then we
can write $f = x_0^3 + g(x_1,x_2,x_3)$ where $g$ is a ternary
cubic with  $4 \le \rk(g) \le 5$.

Then $f_{2}^{\perp}=(y_{0} y_{1}, y_{0}y_{2}, y_{0}y_{3})_{2}+
(g_{2}^{\perp}\cap \CC[y_{1}, y_{2}, y_{3}])_{2}$. As $\rk(g)\ge 4$, we 
have $Z(g_{2}^{\perp})=\emptyset$ and $Z(f_{2}^{\perp})= \{P\} \cup H$ where $P=(1:0:0:0)\in \XX$
is one of the apolar point and $H$ is the plane defined by the 
equation $x_{0}=0$ containing the other apolar points. 

By Sylvester's Pentahedral Theorem  (\cite[Theorem 9.4.1]{Dol12}) any minimal apolar set of points contains four coplanar points.
Then the non-coplanar point and the plane containing the 
other points are uniquely determined by $f_{2}^{\perp}$. 

Let $\ell$ be the linear form corresponding to the non-coplanar point
$P$ (or the isolated point of the zero locus of $f^\perp_2$).
In a suitable basis of $T_1$, we can write 
	$$
		\cat_1(f -c \ell^{3}) = \cat_1(f) - c\cat_1(\ell^3) = 
		\left[\begin{matrix}
			\overline{c}-c & 0 \\
			0 & \cat_1(f')
		\end{matrix}\right].
	$$
Therefore, there exists a unique value $c = \overline{c}$ for
which $\cat_1(f-c \ell^{3})$ has rank $3$ and a unique ternary cubic
$f'$ such that $f= c \, x_{0}^{3}+ f'(x_{1},x_{2},x_{3})$.
Consequently, $\caW_f = (1:0:0:0) \cup \caW_{f'}$. This proves (1).
        
If $\XX$ has three collinear points, then we can write $f = x_0^3 +
x_1^3 + g(x_2,x_3)$ where $g$ is a binary cubic of rank $3$. In this
case, we have that the zero locus of $f^\perp_2$ consists of two
points $(1:0:0:0)$ and $(0:1:0:0)$. Similarly as above, we conclude
that the points and the line where the three collinear points lie is
uniquely determined by $f_{2}^{\perp}$. This proves (2).
\end{proof}

\begin{lemma}\label{lemma: rank 5, deg 4, coplanar}
Let $f$ be a quaternary quartic of rank $5$ with $h_{f}(1)=4$ and let
$\XX$ be a minimal set of points apolar to $f$.
 \begin{enumerate}
 	\item if $\XX$ contains four coplanar points, then we may assume that $f = x_0^3 + g(x_1,x_2,x_3)$, where $g$ is a plane quartic of rank $4$ and we have that $\caW_f = (1:0:0:0) \cup \caW_g$;
 	\item otherwise, $f$ has a unique decomposition given
          by $I_{\XX}=(f_{2}^{\perp})$.
\end{enumerate}
\end{lemma}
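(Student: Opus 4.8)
The plan is to follow the proof of Lemma~\ref{lemma: rank 5, deg 3, coplanar}, splitting on whether some minimal apolar set of $f$ contains four coplanar points and reducing the first alternative to Lemma~\ref{lemma: general cusps}. First I would record preliminaries valid for every minimal apolar set $\XX$ (which has five points): since $h_f(1)=4$, Lemma~\ref{lemma: essential var} forces $\XX$ to span $\PP^3$, so among the configurations of Figure~\ref{fig: 5 points} only (5d) and (5e) occur, and in both $\rho(\XX)\le 2$. As $d=4\ge 2\ge\rho(\XX)$, Lemma~\ref{lemma: regularity} gives $(I_\XX)_2=f^\perp_2$ for every minimal apolar $\XX$; since $h_{S/I_\XX}$ strictly increases to $5$ (Remark~\ref{rmk: HF points}) and equals $4$ in degree $1$, this forces $h_f(2)=5$ and $\dim_\CC f^\perp_2=5$. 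In particular every minimal apolar set is contained in the scheme $Z(f^\perp_2)\subset\PP^3$ cut out by these five quadrics.

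For the first alternative --- some minimal apolar $\XX$ has four coplanar points --- those four points span the plane they lie on (as $\XX$ spans $\PP^3$), so after a coordinate change they lie on $\{x_0=0\}$ and the fifth is $(1:0:0:0)$; by the Apolarity Lemma~\ref{lemma: apolarity}, and after rescaling $x_0$, we get $f=x_0^4+g(x_1,x_2,x_3)$ with $g$ a ternary quartic. The coefficient of $x_0^4$ is nonzero (else $f$ is not concise) and $\rk(g)=4$, because $\XX\cap\{x_0=0\}$ is apolar to $g$ (so $\rk(g)\le 4$) while $\rk(g)\le 3$ would give $\rk(f)\le 4$. Then I would pin down $g^\perp_2$: the minimal apolar set of the concise rank-$4$ ternary quartic $g$ is four points spanning $\PP^2$, of regularity $\le 2$, so Lemma~\ref{lemma: regularity} gives $h_g(2)=4$; hence $g^\perp\cap\CC[y_1,y_2,y_3]$ is spanned in degree $2$ by exactly $2=\binom{3}{2}-1$ quadrics $G_1,G_2$, and with $y_0\in g^\perp$ this gives $g^\perp_2=[(y_0,G_1,G_2)]_2$. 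Now Lemma~\ref{lemma: general cusps} applies with $n=3$, $d=4$, rank $n+2=5$, and yields $\caW_f=(1:0:0:0)\cup\caW_g$ with $\rk(g)=4$.

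In the remaining alternative --- no minimal apolar set of $f$ has four coplanar points --- a minimal apolar set $\XX$ spans $\PP^3$ and has no four coplanar points, hence no three collinear points, so it is a set of five points in general linear position, projectively equivalent to the standard frame. The ideal of such a point set is arithmetically Gorenstein with $h$-vector $(1,3,1)$, resolved by a Buchsbaum--Eisenbud Pfaffian complex $0\to S(-6)\to S(-4)^5\to S(-2)^5\to S$, hence generated by its five quadrics; together with $(I_\XX)_2=f^\perp_2$ this gives $I_\XX=(f^\perp_2)$ and $Z(f^\perp_2)=\XX$. Since every minimal apolar set lies in $Z(f^\perp_2)=\XX$ and has cardinality $5=|\XX|$, it equals $\XX$, so $f$ is identifiable with unique minimal apolar set $Z(f^\perp_2)$. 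The main obstacle is exactly this last input --- that five points in general linear position in $\PP^3$ are ideal-theoretically defined by quadrics --- which one gets from the codimension-$3$ Gorenstein structure theorem, or, more elementarily, by checking directly on the standard frame that the five quadrics spanning $f^\perp_2$ already cut out $\XX$; everything else is a routine combination of apolarity, the regularity bound of Lemma~\ref{lemma: regularity}, and Lemma~\ref{lemma: general cusps}.
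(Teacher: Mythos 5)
Your proof is correct and follows essentially the paper's own route. For part (2) you argue exactly as the paper does: no four coplanar points forces $\rho(\XX)=2$ and $(I_\XX)_2=f^\perp_2$ by Lemma \ref{lemma: regularity}, and everything then hinges on the fact that five points in linearly general position in $\PP^3$ are cut out by their five quadrics --- a fact the paper merely asserts and you actually justify (Gorenstein structure theorem, or the direct check on the standard frame), which is a welcome addition. For part (1) the content is also the same: you write $f=x_0^4+g(x_1,x_2,x_3)$, show that $g$ is a concise ternary quartic of rank $4$ with $h_g(2)=4$, and then conclude by invoking Lemma \ref{lemma: general cusps} with $n=3$, $d=4$; the paper instead re-derives that conclusion in place, computing $f^\perp_2=(y_0y_1,y_0y_2,y_0y_3)_2+(g^\perp_2\cap\CC[y_1,y_2,y_3])$ via \cite{BBKT15} and Theorem \ref{prop: rank 4}(4b-ii, 4c-ii). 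Since your verification that $g^\perp_2=[(y_0,G_1,G_2)]_2$ is exactly the hypothesis of Lemma \ref{lemma: general cusps}, this is a clean, equivalent shortcut rather than a different proof. One slip worth fixing: the minimal free resolution of $S/I_\XX$ for five points in linearly general position in $\PP^3$ is $0\to S(-5)\to S(-3)^5\to S(-2)^5\to S$ (five quadric generators, linear syzygies, socle degree $2$), not $0\to S(-6)\to S(-4)^5\to S(-2)^5\to S$, which would be incompatible with the $h$-vector $(1,3,1)$ and with $\rho(\XX)=2$; the only conclusion you draw from it --- that $I_\XX$ is generated by its degree-$2$ part --- is nevertheless correct, as your alternative direct verification shows.
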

\begin{proof}
If any $4$ of points $\XX$ are not coplanar, then $\XX$ has a regularity $\rho(\XX)=2$ and is defined by $5$ quadrics.
Then by Lemma \ref{lemma: regularity}, $(I_\XX)_2 = f^\perp_2$ and $f$
has a unique decomposition. This proves (2).

If the set $\XX$ contains four coplanar points, we may assume $f =
x_0^d + g(x_1,x_2,x_3)$, where $g$ is a ternary quartic of rank $4$.
Therefore, we know $f^\perp_2 = (y_1,y_2,y_3)_2 \cap g^\perp_2=
(y_0y_1,y_0y_2,y_0y_3)_2+ (g_{2}^{\perp}\cap \CC[y_{1},y_{2},y_{3}])$.
By Theorem \ref{prop: rank 4}(4b-ii \& 4c-ii), $h_{g}(2)=4$, 
$(g_{2}^{\perp}\cap \CC[y_{1},y_{2},y_{3}])$ is generated by two elements
$G_1,G_2$ defining either the $4$ points apolar to $g$ or a point $P$
apolar to $g$ and a line $L$ containing the $3$ other points apolar to
$g$.

This shows that $(1:0:0:0)$ is a point of any minimal set of points
apolar to $f$ and that  $\caW_f = (1:0:0:0) \cup \caW_g$, which proves (1).
\end{proof}

Now, we can give the complete description of rank $5$ polynomials.

\begin{theorem}\label{prop: rank 5}
	Let $f \in S_d$ be a form of rank $5$.
	\begin{enumerate}
	\item[{\rm (5a)}] If $f$ has two essential variables, then $f^\perp = (L_1,\ldots,L_{n-1},G_1,G_2)$, where $\deg(G_i) = d_i$ and $d_1 \leq d_2$. In particular, it has to be $d \geq 5$ and:
	\begin{enumerate}
		\item[(i)] if $d = 5,\ldots,8$, then $d_2 = 5$, and minimal apolar sets of points are defined by ideals $I_{\XX} = (L_1,\ldots,L_{n-1},HG_1+\alpha G_2)$, for a general choice of $H \in T_{8-d}$ and $\alpha \in \CC$;
		\item[(ii)] if $d \geq 9$, there is a unique minimal apolar set of points given by $I_{\XX} = (L_1,\ldots,L_{n-1},G_1)$.
	\end{enumerate}
	\item[{\rm (5b)}] If $f$ has three essential variables and a minimal apolar set $\XX$ of type {\rm (5b)}, then, $d \geq 4$ and:
			\begin{enumerate}
				\item[(i)] if $d = 4,5,6$, then any minimal apolar set is of the type $P \cup \XX'$, where $\XX'$ are collinear points;
				\item[(ii)] if $d \geq 7$, then $(f^\perp_{4})$ defines the unique minimal apolar set of points.
			\end{enumerate}
	\item[{\rm (5c)}] If $f$ has three essential variables and a minimal apolar set $\XX$ of type {\rm (5c)}, then:
	\begin{enumerate}
		\item[(i)] if $d = 3$, \Alessandro{then the Waring locus is dense in all $\PP^2$;}
		\item[(ii)] if $d = 4$, then, if $f^\perp_2 = \langle C \rangle$, we have:
		\begin{enumerate}
		\item[(a)] if $C$ is irreducible, then $\caW_f$ is dense in the conic $Z(C)$;
			\item[(b)] if $C$ is reducible, say $C = \ell_1\ell_2$, let $Q = Z(\ell_1)\cap Z(\ell_2)$; then:
			\begin{enumerate}
				\item[(b1)] if $Q$ {\it is not} a forbidden point for $f$, then $\caW_f$ is dense in $Z(C)$;
				\item[(b2)] otherwise, for either $i = 1$ or $i = 2$,  $\caW_f \cap Z(\ell_i)$ is dense in $Z(\ell_i)$.
			\end{enumerate}
		\end{enumerate} 
		\item[(iii)] if $d \geq 5$, then we have a unique minimal apolar set of points.
	\end{enumerate}
	\item[{\rm (5d)}] If $f$ has four essential variables and a minimal apolar set $\XX$ of type {\rm (5d)}, then it can be written as $f = x_0^d + g(x_1,x_2,x_3)$, where $g$ is a ternary form of rank four; then:
	\begin{enumerate}
		\item[(i)]  if $d = 3$, then $\caW_f = (1:0:0:0) \cup \caW_g$;
		\item[(ii)] if $d \geq 4$, then there is unique minimal apolar set given by $I_\XX = (f^\perp_2)$.
	\end{enumerate}
	\item[{\rm (5e)}] If $f$ has four essential variables and a minimal apolar set $\XX$ of type {\rm (5e)}, then:
	\begin{enumerate}
		\item[(i)] if $d = 3$, it is Sylvester Pentahedral Theorem and we have a unique decomposition in $f^\perp_2$;
		\item[(ii)] if $d \geq 4$, then there is a unique decomposition given by $I_\XX = (f^\perp_{2})$.
	\end{enumerate}	 
	\item[{\rm (5f)}] If $f$ has five essential variables, there is a unique minimal apolar set given by $I_{\XX} = (f^\perp_2)$.
	\end{enumerate}
\end{theorem}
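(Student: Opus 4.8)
The plan is to argue by a case analysis, organised first by the number $\HF_f(1)$ of essential variables, which by Lemma~\ref{lemma: essential var} equals $2,3,4$ or $5$ and forces every minimal apolar set to sit in $\PP^1,\PP^2,\PP^3$ or $\PP^4$ respectively (and every minimal decomposition to use only those variables), and then, inside $\HF_f(1)=3$ and $\HF_f(1)=4$, by the projective configuration of a fixed minimal apolar set $\XX$. In each case the goal is to describe $f^\perp$ — or, when that is out of reach, its truncation $f^\perp_{\le k}$ in the degrees that matter — and to read off all minimal apolar sets, invoking one of the structural results of Sections~\ref{sec: basic}--\ref{sec: low}. I would first dispatch the two extreme cases. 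In \textbf{(5a)} ($\HF_f(1)=2$), modulo $n-1$ linear forms $f$ is a binary form with $f^\perp=(L_1,\dots,L_{n-1},G_1,G_2)$, where $\{G_1,G_2\}$ of degrees $d_1\le d_2$, $d_1+d_2=d+2$, is a complete intersection; Sylvester's algorithm (Example~\ref{example: sylvester}) then gives $\rk(f)=d_1$ if $G_1$ is square-free and $\rk(f)=d_2$ otherwise, and imposing $\rk(f)=5$ splits into the regime $d_2=5$ (so $5\le d\le 8$, the lower bound because a binary form of rank $5$ has degree $\ge 5$), producing the pencil $HG_1+\alpha G_2$ with $H\in T_{8-d}$ of case (i), and the regime $d_1=5$ (so $d\ge 9$), producing the unique set defined by $G_1$ of case (ii) — exactly as for ranks $3$ and $4$ (Propositions~\ref{prop: rank 3}, \ref{prop: rank 4}). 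In \textbf{(5f)} ($\HF_f(1)=5$), $f$ is concise in $\PP^4$ with $\rk(f)=5=n+1$, so Lemma~\ref{lemma: rank = essential} applies verbatim: $f^\perp_2$ defines a reduced scheme, which is the unique minimal apolar set.

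For $\HF_f(1)=3$ the form is, up to coordinates, ternary. In \textbf{(5b)} the configuration — four collinear points plus one off the line — lets us write $f=x_0^d+g(x_1,x_2)$ with $g$ binary of rank $4$, so $d\ge 4$; by Lemma~\ref{lemma: general cusps rank5}, $\rk(f)=\rk(g)+1$ and $\caW_f=(1{:}0{:}0)\cup\caW_g$, so every minimal apolar set has the shape $P\cup\XX'$ with $\XX'$ collinear, which is case (i) for $d=4,5,6$; for $d\ge 7$ one computes $\HF_{\XX}=1,3,4,5,5,\dots$, so $\rho(\XX)=3$ and $d\ge 2\rho(\XX)+1$, whence Theorem~\ref{thm: regularity} forces $I_{\XX}=(f^\perp_{\le 4})$ and uniqueness, which is case (ii). In \textbf{(5c)}: for $d=3$ the rank, $5$, exceeds the generic rank $4$ of plane cubics, so Theorem~\ref{thm: Waring loci supgeneric} (applied to the Veronese of cubics) makes $\caW_f$ dense in $\PP^2$; for $d=4$, Lemma~\ref{lemma: regularity} gives $(I_{\XX})_2=f^\perp_2$, which is one-dimensional — the five points lie on a unique conic $C$ because type (5c) excludes four collinear points — so $f^\perp_2=\langle C\rangle$ and Lemma~\ref{lemma: plane quartics rank 5} yields precisely subcases (a), (b1), (b2); for $d\ge 5$ one has $\HF_{\XX}=1,3,5,5,\dots$, so $\rho(\XX)=2$ and $d\ge 2\rho(\XX)+1$, and Theorem~\ref{thm: regularity} gives the unique minimal apolar set.

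For $\HF_f(1)=4$: in \textbf{(5d)} the four coplanar points let us write $f=x_0^d+g(x_1,x_2,x_3)$ with $g$ ternary of rank $4$ and with $3$ essential variables (otherwise $f$ would have fewer than $4$), and the splitting $f^\perp_i=((y_0^d)^\perp)_i\cap g^\perp_i$ for $i\le d-1$ (\cite[Lemma~1.12]{BBKT15}) together with the rank-$4$ analysis (Proposition~\ref{prop: rank 4}) reduces everything to $g$: for $d=3$ this is Lemma~\ref{lemma: rank 5, deg 3, coplanar}(1)--(2), giving $\caW_f=(1{:}0{:}0{:}0)\cup\caW_g$; for $d\ge 4$ this is Lemma~\ref{lemma: rank 5, deg 4, coplanar}, and when $g$ is of type (4c) (hence has a unique decomposition) one moreover gets $(I_{\XX})_2=f^\perp_2$ from Lemma~\ref{lemma: regularity} and, since $\XX$ then has $\HF_{\XX}=1,4,5,5,\dots$ and $h$-vector $(1,3,1)$ — so it is arithmetically Gorenstein with $I_{\XX}$ generated by quadrics — concludes $I_{\XX}=(f^\perp_2)$ as the unique minimal apolar set. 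In \textbf{(5e)} the five points are in linearly general position in $\PP^3$ (again $\rho(\XX)=2$): for $d=3$ this is Sylvester's Pentahedral Theorem (Lemma~\ref{lemma: rank 5, deg 3, coplanar}(3), or \cite[Theorem~9.4.1]{Dol12}), identifying $\XX=Z(f^\perp_2)$; for $d=4$, Lemma~\ref{lemma: regularity} gives $(I_{\XX})_2=f^\perp_2$ and, the ideal of five points in linearly general position in $\PP^3$ being generated by its five quadrics, $I_{\XX}=(f^\perp_2)$, so $\XX=Z(f^\perp_2)$ and uniqueness follows since any other minimal apolar set has the same degree-$2$ part; for $d\ge 5$, $d\ge 2\rho(\XX)+1$ and Theorem~\ref{thm: regularity} applies directly.

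The routine half of this — the ``large $d$'' subcase of each configuration — is a uniform consequence of Theorem~\ref{thm: regularity} once $\rho(\XX)$ has been read off, and that computation is short for all five planar and spatial configurations. The hard part lies elsewhere, in two places. First, one must show that the configuration type of a minimal apolar set is an invariant of $f$, so that conditioning on ``$f$ of type (5b)'' and asserting that \emph{every} minimal apolar set has the stated shape is legitimate; as for ranks $3$ and $4$, this rests on the strict monotonicity of $\HF_{\XX}$ (Remark~\ref{rmk: HF points}) together with the bookkeeping of the catalecticant ranks. Second, the degenerate members of each stratum need separate treatment: in (5c) one must split on whether $C$ is irreducible and, in the reducible case, on whether its node is a forbidden point, as in Lemma~\ref{lemma: plane quartics rank 5}; in (5d), when $g$ is of type (4b) the four coplanar points are themselves special ($\caW_f$ is then genuinely positive-dimensional and $Z(f^\perp_2)$ contains a line), so the clean statement of case (ii) really describes the generic member of the stratum; and in the low-degree boundary cases ($d=4,5,6$ in (5b), $d=4$ in (5c) and (5e)) one must check by hand that $f^\perp$ is generated in the claimed degrees rather than merely quote Theorem~\ref{thm: regularity}. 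These hand computations, together with the occasional ad hoc reduction of a degenerate configuration to one of smaller rank, are where essentially all the remaining work lives.
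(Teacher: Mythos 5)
Your proposal is correct and follows essentially the same route as the paper: the same case division by number of essential variables and by configuration of a fixed minimal apolar set, dispatched by the same ingredients --- Sylvester's algorithm (Example~\ref{example: sylvester}) for (5a), Lemma~\ref{lemma: general cusps rank5} for (5b), Lemma~\ref{lemma: plane quartics rank 5} and density for (5c), Lemmas~\ref{lemma: rank 5, deg 3, coplanar} and \ref{lemma: rank 5, deg 4, coplanar} for (5d), Lemma~\ref{lemma: rank = essential} for (5f), and Lemma~\ref{lemma: regularity}/Theorem~\ref{thm: regularity} for all the large-degree subcases. Your minor substitutions (Theorem~\ref{thm: Waring loci supgeneric} in place of the citation to \cite[Section 3.4]{CCO17} for (5c)(i), and the regularity bound $\rho(\XX)=3$ instead of Theorem~\ref{prop: rank 4}(4a) for (5b)(ii)) stay inside the paper's own toolbox, and the delicate sub-case of (5d) you flag (when the four coplanar points contain three collinear ones, i.e.\ $g$ is of type (4b)) is treated no more carefully in the paper's proof than in yours.
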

\begin{proof}
	{\it Case {\rm (5a)}.} It follows from Sylvester algorithm, Example \ref{example: sylvester}. 
	
	{\it Case {\rm (5b)}.} Up to a change of coordinates, we may assume $f = x_0^d + g(x_1,x_2)$, where $g$ is a binary form of rank $4$ then, since binary forms have maximal rank equal to the degree, it has to be $d \geq 4$. The claim follows from Lemma \ref{lemma: general cusps rank5}, in the special case $r = 4$, and by Theorem \ref{prop: rank 4}(4a).
	
	{\it Case {\rm (5c)}.} If $d = 3$, it follows from \cite[Section 3.4]{CCO17}. If $d = 4$, it follows from Lemma \ref{lemma: plane quartics rank 5}. Since $5$ general points in $\PP^2$ have regularity $2$, it follows from Theorem \ref{thm: regularity} in the cases with $d \geq 5$.
	
	{\it Case {\rm (5d)}.} It follows from Lemma \ref{lemma: rank 5, deg 3, coplanar} and Lemma \ref{lemma: rank 5, deg 4, coplanar}.
	
	{\it Case {\rm (5e).}} If $d = 3$, this is the classical Sylvester Penthaedral Theorem \cite[Theorem 3.9]{OO13}. Since the regularity of $5$ points in $\PP^3$ is equal to $2$, by Lemma \ref{lemma: regularity}, if $d \geq 4$, we have that $(I_\XX)_2 = f^\perp_2$. Since five general points in $\PP^3$ are generated by quadrics, the claim follows. 

	{\it Case {\rm (5f)}.} It follows from Lemma \ref{lemma: rank = essential}.
\end{proof}

\begin{remark}
	Similarly as in the previous cases, this result gives us a stratification of polynomials of rank $5$. In particular, we obtain a positive answer to Question \ref{question:A}. Again, we want to underline that some of the cases cannot be distinguished just by looking at the Hilbert function of the set of points, but we should look at the entire resolution and, in particular, to the graded Betti numbers.
\end{remark}


\section{Low rank symmetric tensor decomposition algorithm}\label{sec: main}

In this section, we summarize the low rank cases and give a
procedure, which determines the rank of the tensor when it is $\le 5$ and
computes its decomposition. The analysis depends on the {\it Hilbert sequence}
$h_{f}=[h_{f}(0), h_{f}(1), \ldots,h_{f}(d)]$ of $S/f^{\perp}$ and the locus of
$f^{\perp}_{i}= \ker \cat_{i}(f)$. We have $h_{f}(0)= h_{f}(d)=1$ and
the sequence $h_{f}$ is symmetric ($h_{f}(d-i)=h_{f}(i)$) of length
$d+1$ where $d=\deg(f)$.
The rank $r$ of $f$ is such that $r\ge l(f)=\max_{i}\{h_{f}(i)\}$.
Hereafter, we denote by  $*$ a finite sequence
of values of length at least $1$ and by $k^{*}$ a finite sequence of
constant terms $k$ of length at least $1$.

We consider here symmetric tensors of degree $d\ge 3$, since the
decomposition of quadrics can be done by rank decomposition of symmetric matrices. We implemente the procedure described in the following theorem in the algebra software {\it Macaulay2} \cite{M2}; see Section \ref{sec:M2}.
\begin{theorem}[\bf and low rank decomposition algorithm]\label{thm: main}
Let $f$ be a symmetric tensor of degree $d\ge 3$. Then, either one the
following points is satisfied or $\rk (f)> 5$:
{\small
\begin{center}
\begin{tabular}{l c l}
	\quad\quad {\begin{tabular}{c}{\sc Hilbert} \\ {\sc Sequence}\end{tabular}} & {\begin{tabular}{c}{\sc Extra} \\ {\sc Condition}\end{tabular}} &  {\sc Algorithm to find a minimal apolar set} \\
	\hline
	$(1)$ \quad $[1^*]$ & & $\rk(f)= 1$ and $(f_{1}^{\perp})$ defines the point apolar to $f$ \\
	\hdashline
	$(2)$ \quad $[1,2,*,2,1]$ & & $f$ has two essential variables and Sylvester algorithm is applied: \\
	& & {\rm (i)} if $f^\perp_{l(f)}$ defines a set of $l(f)$ reduced points, then $\rk(f) = l(f)$; \\
	& & {\rm (ii)} otherwise, $\rk(f) = d+2-l(f)$ and a minimal apolar set\\
	& & \quad  is given by the principal ideal generated by \\
	& & \quad  a generic form $g \in f^\perp_{d+2-l(f)}$\\
	\hdashline
	$(3)$ \quad $[1,3,3,1]$ & $Z(f^\perp_2) = \emptyset$ & a generic pair of conics $q_{1},q_{2}$ of $f_{2}^{\perp}$ defines $4$ points and $\rk(f)=4$\\
		 \hdashline
	$(4)$ \quad $[1,3,3,1]$ & \multirow{4}{*}{\begin{tabular}{c}$Z(f^{\perp}_{2}) = P \cup D$, \\ $P$ is simple point \\ $D$ connected, $0$-dim \\ $\deg(D) = 2$ \end{tabular}} & $\rk(f) = 4$ and $P$ is a point of any minimal apolar set; then, we find \\
	& & \quad \quad the scalar $c$ such that $f' = f-c\ell_P^3$ has two essential variables \\
	& & \quad \quad and we apply Sylvester algorithm to $f'$ as in {\rm (2)} \\
	& & \\
	\hdashline
	$(5)$ \quad $[1,3,3,1]$ & $Z(f^\perp_2) = D$ & $\rk(f) = 5$
        and, for a generic $P$ and a generic $c\neq 0$ such that \\
		 & $D$ connected, $0$-dim & \quad $f'=f+c\ell_P^{3}$ is a ternary cubic of rank $4$ and we apply {\rm (4)} to $f'$ \\
		 & $\deg(D) = 3$ & \\
	\hdashline
	$(6)$ \quad $[1,3,3^{*},3,1]$ & \multirow{2}{*}{\begin{tabular}{c}$Z(f^\perp_2) = \{P_1,P_2,P_3\}$ \\ $P_i$'s are simple points \end{tabular}} &
	$\rk(f) = 3$ and the unique minimal apolar set is $Z(f^\perp_2)$ \\
	& & \\
	\hdashline
	$(7)$ \quad $[1,3,*,3,1]$ & \multirow{3}{*}{\begin{tabular}{c}$Z(f^\perp_2) = P \cup L$ \\ $P$ is simple point \\ $L$ is line, $P \not\in L$ \end{tabular}} & $P$ is a point of any minimal apolar set; then, we find  \\
	& & \quad \quad the scalar $c$ such that $f' = f - c\ell_P^d$ has two essential variables \\
	& & \quad \quad  and we apply Sylvester algorithm to $f'$ as in {\rm (2)} \\
	\hdashline
	$(8)$ \quad $[1,3,4^*,3,1]$ & $Z(f^\perp_2) = \{P_1,\ldots,P_4\}$ & $\rk(f) = 4$ and the unique minimal apolar set is $Z(f^\perp_2)$ \\
	& $P_i$'s are simple points \\
	\hdashline
	$(9)$ \quad $[1,3,5,3,1]$ & $Z(f^\perp_2) = C$ & let $P$ be a generic point on $C$ and $c$ be a scalar such that \\ 
	& $C$ is irreducible quadric & \quad \quad $f' = f - c\ell_P^4$ has $h_{f'}(2) = 4$. \\
	& & {\rm (i)} if $Z((f')^\perp_2) = \{P_1,\ldots,P_4\}$ is a set of $4$ reduced points, then, \\
	& & \quad \quad $\rk(f) = 5$, and a minimal set apolar to $f$ is $\{P,P_1,\ldots,P_4\}$; \\
	& & {\rm (ii)} otherwise, $\rk(f) > 5$ \\
	\hdashline
	$(10)$ \quad $[1,3,5,3,1]$ & $Z(f^\perp_2) = L_1\cup L_2$ & let $P_i$ be a generic point on $L_i$, for $i = 1,2$, respectively, and \\
	& $L_i$'are distinct lines & \quad $c_i$ be a scalar such that $f_i = f - c_i \ell_{P_i}^4$ has $h_{f_i}(2) = 4$, for $i = 1,2$. \\
	& & {\rm (i)} if $Z((f_i^\perp)_2) = \{P_1,\ldots,P_4\}$, for either $i = 1$ or $i = 2$, then, \\
	& & \quad \quad $\rk(f) = 5$, and a minimal apolar set of $f$ is $\{P,P_1,\ldots,P_4\}$; \\
	& & {\rm (ii)} otherwise, $\rk(f) > 5$ \\
	\hdashline
	$(11)$ $[1,3,5,5^*,3,1]$ & $Z(f^\perp_3) = \{P_1,\ldots,P_5\}$ & $\rk(f) = 5$ and the unique minimal apolar set is $Z(f^\perp_3)$ \\
	& $P_i$'s are reduced points \\
	\hdashline
	$(12)$ \quad $[1,4,4,1]$ & $Z(f^\perp_2) = P \cup H$ & $P$ is a point of any minimal apolar set; then, we find \\
	& $P$ is a reduced point & \quad the scalar $c$ such that $f' = f - c\ell_P^3$ has three essential variables \\
	& $H$ is a plane, $P \not\in H$ & \quad and we apply $(3)$ or $(4)$ to $f'$ \\
	\hdashline
	$(13)$ \quad $[1,4,5^*,4,1]$ & $Z(f^\perp_2) = \{P_1,\ldots,P_5\}$ & $\rk(f) = 5$ and the unique minimal apolar set is $Z(f^\perp_2)$ \\
	\hdashline
	$(14)$ \quad $[1,5,5^*,5,1]$ & $Z(f^\perp_2) = \{P_1,\ldots,P_5\}$ & $\rk(f) = 5$ and the unique minimal apolar set is $Z(f^\perp_2)$ \\
\end{tabular}
\end{center}
}
\end{theorem}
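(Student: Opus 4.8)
The statement compiles the case analysis of Section \ref{sec: low} into a decision procedure, so the plan is a reduction followed by a dispatch to results already proved. First I would use Lemma \ref{lemma: essential var}: the number of essential variables of $f$ is $h_f(1) = \rk\cat_1(f)$ and every minimal Waring decomposition uses only linear forms in these essential variables; since moreover $\rk(f) \ge l(f) \ge h_f(1)$, the hypothesis $\rk(f)\le 5$ forces $m := h_f(1)\in\{1,\ldots,5\}$, and we may assume $f$ concise in $m$ variables. So it suffices to show that, for each $m$, a concise form of rank $\le 5$ in $m$ variables matches one of the fourteen rows and that the algorithm attached to that row outputs a minimal apolar set.

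The cases $m\in\{1,2,4,5\}$ are essentially immediate dispatches. For $m=1$ this is Example \ref{example: rank 1} (row $(1)$). For $m=2$ the apolar ideal is a complete intersection $(G_1,G_2)$ with $\deg G_1 = l(f)$ and $\deg G_1+\deg G_2=d+2$, so $h_f=[1,2,\ldots,2,1]$ and Example \ref{example: sylvester} gives $\rk(f)\in\{l(f),\,d+2-l(f)\}$ with the dichotomy of row $(2)$. For $m=5$, conciseness forces $\rk(f)\ge 5$, hence $=5$, and Lemma \ref{lemma: rank = essential} puts us in the coordinate case with $h_f=[1,5,5^*,5,1]$ and $f^\perp_2$ cutting out the five coordinate points (row $(14)$). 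For $m=4$ one separates $\rk(f)=4$ (Theorem \ref{prop: rank 4}(4d), unique set $(f^\perp_2)$) from $\rk(f)=5$, where Theorem \ref{prop: rank 5}(5d),(5e) and Lemmas \ref{lemma: rank 5, deg 3, coplanar}, \ref{lemma: rank 5, deg 4, coplanar} pin down the geometry of $Z(f^\perp_2)$ and, in the reducible case, the scalar $c$ for which $f-c\ell_P^d$ drops to three essential variables — rows $(12)$--$(14)$.

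The substance is the case $m=3$ of ternary forms. Here unimodality of the Gorenstein codimension-$3$ Hilbert function together with $h_f(2)\le\rk(f)\le 5$ forces $h_f(2)\in\{3,4,5\}$, and within each value one reads off the scheme $Z(f^\perp_2)$ — for the rank-$5$ strata also $Z(f^\perp_3)$ or $Z(f^\perp_4)$. Proposition \ref{prop: rank 3} covers the rank-$3$ ternary forms (row $(6)$; the collinear sub-case is reabsorbed into row $(2)$ through its two essential variables); Theorem \ref{prop: rank 4}(4b),(4c), together with the Waring-locus computations of \cite{CCO17}, cover the rank-$4$ ternary forms (rows $(3)$, $(4)$, $(7)$, $(8)$); and Theorem \ref{prop: rank 5}(5b),(5c) together with Lemmas \ref{lemma: general cusps rank5} and \ref{lemma: plane quartics rank 5} and Theorem \ref{thm: regularity} cover the rank-$5$ ternary forms (row $(5)$ for the cuspidal cubic, rows $(9)$--$(10)$ for plane quartics with $f^\perp_2=\langle C\rangle$, row $(11)$ for $d\ge 5$ via the fact that five general plane points have regularity $2$). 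In every row the displayed algorithm is exactly the construction from the cited proof: either $(f^\perp_k)$ already cuts out a reduced scheme of the right length and Lemma \ref{lemma:fk} applies; or one peels off a single power $c\ell_P^d$ at an isolated point $P$ of $Z(f^\perp_2)$ and recurses; or, in the generic-rank cases, one picks $P$ generically on the positive-dimensional component of $Z(f^\perp_2)$ (resp. on the conic $Z(C)$) and solves the eigenvalue-type equation from the proof of Lemma \ref{lemma: plane quartics rank 5} for the scalar $c$.

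The step I expect to be the real obstacle is exhaustiveness, in two directions. On one hand one must check that the fourteen pairs (Hilbert sequence, geometric condition on $Z(f^\perp_i)$) leave out no concise form of rank $\le 5$ — equivalently, that every admissible configuration of at most five apolar points is accounted for — using the stratifications of the rank loci recorded in Propositions \ref{prop: rank 3}--\ref{prop: rank 5} and the surrounding remarks, and verifying that the geometric conditions separating neighbouring rows (e.g.\ $(4)$ versus $(5)$, $(9)$ versus $(10)$, or the scheme-theoretic degenerations of $[1,3,3^*,3,1]$) are mutually exclusive and visibly computable from the catalecticants. On the other hand one must justify the failure clauses ("otherwise $\rk(f)>5$" in rows $(9)$, $(10)$, and the fall-through case): if $f-c\ell_P^d$ does not acquire the predicted rank for the prescribed $P$ and $c$, then $\ell_P$ lies in no minimal apolar set of $f$, and the $X$-decomposition-locus analysis of Section \ref{ssec: loci} forces $\rk(f)$ above $5$. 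This is bookkeeping rather than new mathematics, but it is the only part that must be checked line by line rather than quoted.
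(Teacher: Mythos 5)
Your overall strategy is the same as the paper's: reduce to a concise form via $h_f(1)$, read off the Hilbert sequence and the scheme $Z(f^\perp_i)$, dispatch each row to the corresponding result of Section \ref{sec: low} (Proposition \ref{prop: rank 3}, Theorems \ref{prop: rank 4} and \ref{prop: rank 5}, Lemmas \ref{lemma: general cusps rank5}, \ref{lemma: plane quartics rank 5}, \ref{lemma: rank 5, deg 3, coplanar}, \ref{lemma: rank 5, deg 4, coplanar}), and justify the failure clauses by contraposition; the paper also handles rows $(6),(8),(11),(13),(14)$ uniformly by Lemma \ref{lemma:fk} rather than by regularity, but that is a cosmetic difference.

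There are, however, concrete inaccuracies in your dispatch, exactly at the points where the paper's proof does real work rather than bookkeeping. First, for $m=4$ you only separate $\rk(f)=4$ from $\rk(f)=5$, but the hypotheses of row $(12)$ (sequence $[1,4,4,1]$, $Z(f^\perp_2)=P\cup H$) are also satisfied by rank-$6$ quaternary cubics $x_0^3+g(x_1,x_2,x_3)$ with $\rk(g)=5$; the paper's proof of case $(12)$ must (and does) rule out rank $4$ via Theorem \ref{prop: rank 4}, and then verify the row's assertions for both $\rk(f)=5$ and $\rk(f)=6$, with the recursion into $(3)$/$(4)$ detecting which occurs. Your outline omits this. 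Second, your claim that concise rank-$4$ forms in four variables (type (4d)) land in rows $(12)$--$(14)$ is not correct: for those forms $Z(f^\perp_2)$ consists of four reduced points, so row $(12)$'s extra condition fails (this is a spot where the table itself is delicate, and your blanket assignment does not repair it). Third, row $(5)$ is the case of ternary cubics of maximal rank $5$ (e.g.\ a conic plus a tangent line), not the cuspidal cubic, which has rank $4$ and is row $(4)$. Finally, for rows $(3)$--$(5)$ the paper does more than quote: it bounds $4\le\rk(f)\le 5$, uses the classification of plane cubics to obtain the trichotomy on $Z(f^\perp_2)$, and for row $(3)$ proves that a \emph{generic} pencil of conics in $f^\perp_2$ equals the conics through a generic point of the (dense) Waring locus and hence cuts out a minimal apolar set; similarly, the existence and computation of the scalar $c$ making $\cat_1(f-c\ell_P^d)$ drop rank is argued via the block structure of the catalecticant. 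These per-row converse verifications are the substance of the paper's proof and should not be filed under bookkeeping.
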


\begin{proof}
By the analysis of the previous sections, a symmetric tensor of rank
$\le 5$ satisfies one of these cases.

Let us prove conversely that if one of these cases is satisfied then the rank is determined.

  {\em  Case $(1)$.} $f$ has one essential variable and
  thus $\rk(f)=1$.

  {\em Case $(2)$.} $f$ has two essential variables and can be decomposed by Sylvester algorithm; see Example \ref{example: sylvester}.

 {\em  Case $(3), (4)$ and $(5)$.} We have $\rk(f)\ge h_{f}(1)=3$.
  If $\rk(f)=3$, then by \Alessandro{Proposition} \ref{prop: rank 3}, $f_{2}^{\perp}$ should define $3$ reduced points, which is not the case.
  Hence, \Alessandro{since the maximal rank of plane cubics is $5$, we have} $4\le \rk(f)\le 5$.
  \Alessandro{Considering the classification of plane cubics (see \cite{LT10}), it is possible to check that we have three possibilities for $Z(f^\perp_2)$.

  If $Z(f^\perp_2) = \emptyset$, we know that the Waring locus is dense in the projective plane. If $\rk(f)=4$, for a generic point $P$, there exists $P_{1},P_{2},P_{3}$ such that $\XX=\{P, P_{1}, P_{2}, P_{3}\}$ is a minimal set of points apolar to $f$. Moreover, they are of type (4c). Then,
  $(I_{\XX})_{2}\subset f_{2}^{\perp}$ is spanned by two quadrics
  $q_{1},q_{2}\in f_{2}^{\perp}$. Thus $(I_{\XX})_{2}$ is the linear
  space of quadrics in $f_{2}^{\perp}$ containing $P$.  Conversely, a generic subset of $f_{2}^{\perp}$ of dimension $2$ is the space of quadrics in $f_{2}^{\perp}$ containing a generic point
  $P$. It coincides with $(I_{\XX})_{2}$ for a minimal set of
  points $\XX$ apolar to $f$. This proves the point (3).

 If $Z(f^\perp_2) = P \cup D$, where $P$ is a simple point and $D$ is a degree $2$ connected $0$-dimensional scheme. As $f$ has three essential variables, we can assume that $f$ is a ternary form in the variables $x_0,x_1,x_2$. By a change of coordinates, $P = (1:0:0)$ and $D$ lies on the line $y_0 = 0$, e.g., $D$ is defined by the ideal $(y_0,y_1^2)$. Then, $f = x_0^3 + f'(x_1,x_2)$. Since binary cubics have rank at most $3$, we have $\rk(f) = 4$. By Theorem \ref{prop: rank 4}(4b-i), $P$ is in any minimal set apolar to $f$. The other three (collinear) points to get a minimal set of points apolar to $f$ are found by applying (2) to the form $f' = f - c\ell_P^3$, where $c$ is a suitable scalar such that $f'$ has two essential variables, i.e., such that $\rk(\cat_1(f) - c\cat_1(\ell_P^3)) = 2$. This proves the point (4).
 
 If $Z(f^\perp_2) = D$, where $D$ is a degree $3$ connected $0$-dimensional scheme lying on a plane conic. Since the rank $4$ cases have $Z(f^\perp_2)$ which is either empty or the union of a simple point and a degree $2$ scheme, we have that $\rk(f) = 5$. Then, by Theorem \ref{thm: Waring loci supgeneric}, for any generic point $P$ and any non-zero $c\in
  \CC$, $f'= f+ c\ell_P^{3}$ is of rank $4$ and the previous
  decomposition applies.  This proves the point (5).}

{\em  Cases $(6), (8), (11), (13)$ and $(14)$.} They are consequences of Lemma \ref{lemma:fk}. 

 {\em  Case $(7)$.} As $f$ has $3$ essential variables, we can assume
  that it is a ternary form in the variables $x_{0},x_{1},x_{2}$.
  By a change of coordinates, we can also assume that
  $f_{2}^{\perp}$ defines $P=(1:0:0)$ and the line $L$
  of equation $y_{0}=0$.
  Then, \Alessandro{$f$ can be written (up to scalar) as $f=x_{0}^{d}+f'(x_{1},x_{2})$} and $f_{2}^{\perp}\supset \langle y_{0} y_{1}, y_{0} y_{2}\rangle$.

  If $\rk(f)=3$, then by $f_{2}^{\perp}$ should define the apolar
  points, which is not the case. 
  Hence, $\rk(f)\ge 4$ \Alessandro{and we} deduce the result from Lemma \ref{lemma:
  general cusps rank5}. \Alessandro{In particular, $P$ is in any minimal set of points apolar to $f$. The other (collinear) points to get a minimal set of points apolar to $f$ are found by applying (2) to the form $f' = f - c\ell_P^3$, where $c$ is a suitable scalar such that $f'$ has two essential variables, i.e., such that $\rk(\cat_1(f) - c\cat_1(\ell_P^3)) = 2$.}

  {\em  Cases $(9)$ and $(10)$.}
We have $\rk(f)\ge h_{f}(2)=5$.
If $\rk(f)=5$, we deduce the decomposition of $f$ by applying Lemma
\ref{lemma: plane quartics rank 5}. \Alessandro{In particular, choosing one of the intersections between a generic line and the irreducible conic $Q$, in the case (8), or the reducible conic $L_1L_2$, in the case (9), we can find a scalar $c$ such that $f' = f - c\ell_P^4$ has rank $4$, i.e., such that $\rk(\cat_2(f) - c\cat_2(\ell_P^4)) = 4$. Then, we apply (7) to $f'$.}

 {\em  Case $(12)$.} As $f$ has $4$ essential
  variables, \Alessandro{we can assume it} is a quaternary cubic in the variables
  $x_{0},x_{1},x_{2}, x_{3}$. By a change of coordinates, we can
  assume that the zero locus of $f_{2}^{\perp}$ is $P=(1:0:0:0)$ and
  the plane $H$ defined by $y_{0}=0$.
  Then, \Alessandro{$f$ can be written (up to a scalar) as
  $f=x_{0}^{d}+f'(x_{1},x_{2}, x_{3})$} and $f_{2}^{\perp}\supset\langle y_{0} y_{1}, y_{0} y_{2}, y_{0} y_{3}\rangle$. \Alessandro{Since ternary cubics have rank at most $5$,} we have $4=h_{f}(1)\le \rk(f)\le 6$. If $\rk(f)=4$, by Theorem \ref{prop: rank 4}(5d),
  $f_{2}^{\perp}$ should define $4$ reduced points, which is not the
  case. Thus $\rk(f)\ge 5$.

  If $\rk(f)=5$, by Lemma \ref{lemma: rank 5, deg 3, coplanar}, $P$ is
  a point of any  minimal apolar set of points of $X$, $\rk(f)=\rk(f')+1$ and the other
  points form a minimal set of points apolar to $f'$.

  If $\rk(f)=6$, then $\rk(f')=5$, $P$ is one of the apolar points to
  $f$ and the other are the apolar points to $f'$.

  These points can be computed by finding the scalar $c$ such that $f'=f-c\ell_
  P^{d}$ has 3 essential variables, \Alessandro{i.e., by imposing $\rank (\cat_{1}(f)-c\,
  \cat_{1}(\ell_P^{d}))=3$,} and by applying (3) and (4) to the cubic $f'$. 
\end{proof}
 
\section{A {\it Macaulay2} package}\label{sec:M2}

\Alessandro{The procedure explained in the previous section} can be implemented by using computational algebra or computer algebra softwares. We chose to use the algebra software {\it Macaulay2} \cite{M2}. The package {\tt ApolarLowRank.m2} here described can be found on the personal webpage of the second author \Alessandro{or in the ancillary files of the arXiv and HAL versions of the article and can be loaded as}
{\small
\begin{verbatim}
   i1 : loadPackage "ApolarLowRank"
   o1 = ApolarLowRank
   o1 : Package
\end{verbatim}
}
\noindent \Alessandro{For more details, we refer to the documentation }
 {\small
\begin{verbatim}
   i2 : viewHelp ApolarLowRank
\end{verbatim}
}
\noindent \Alessandro{In the following, we explain some of the main functions and we show how it works in a few examples.

\subsection{Essential variables} 
As we have explained in Section \ref{ssec: essential}, given a homogeneous polynomial $f \in S$, the essential number of variables of $f$ is the smallest number $N$ such that there exists linear forms $\ell_1,\ldots,\ell_N\in S$ such that $f \in \CC[\ell_1,\ldots,\ell_N]$. In our packege, we have implemented the functions:}
\begin{itemize}
\item {\tt essVar}, which returns the number of variables of $f$ and a list of linear forms generating $f^\perp_1$;
{\small
\begin{verbatim}
   i3 : S = QQ[x,y,z,t];
   i4 : F = (x+y)^5 + (z-t)^5;
   i5 : essVar(F)
   o5 = (2, {- x + y, z + t})
   o5 : Sequence
\end{verbatim}
}
\item {\tt simplifyPoly}, which returns a simplified version of the polynomial in a set of essential variables and a ring map describing the linear change of coordinates needed.
{\small
\begin{verbatim}
   i6 : simplifyPoly(symbol Y, F)
          5    5
   o6 = (Y  - Y , map(S,QQ[Y , Y ],{x + y, - z + t}))
          0    1            0   1
   o6 : Sequence
\end{verbatim}
}
\end{itemize}
\Alessandro{Note that as input in the function {\tt simplifyPoly} it is required also a {\tt Symbol} so that the user can chose a name for the indexed variables for the output. 

\subsection{Two essential variables: Sylvester's algorithm}
In the case of two essential variables, Sylvester's algorithm tells us how to find a minimal set of points apolar to a given form; see Example \ref{example: sylvester}.

In our package, we implemented the function {\tt sylvesterApolar} that returns a minimal set of points apolar to a given form with two essential variables.

Note that, as input, it is required also a {\tt Symbol} so that the user can chose a name for the indexed variables for the output, which is expressed in a set of essential variables of the polynomial. The output is a {\tt ApolarScheme} which is new type of {\tt HashTable} that we have introduced within the package. In particular, an {\tt ApolarScheme} has four attributes: 
\begin{enumerate}
\item {\tt hPoly}, which is a homogeneous polynomial; 
\item {\tt idX}, which is the ideal defining a $0$-dimensional scheme apolar to the polynomial given by {\tt hPoly}; 
\item {\tt Xdeg}, which is an integer giving the degree of the $0$-dimensional scheme; 
\item {\tt Xred}, which is a boolean saying if the $0$-dimensional scheme is whether reduced or not.
\end{enumerate}
Hence, the function {\tt sylvesterApolar} works as follows.}
{\small
\begin{verbatim}
   i7 : sylvesterApolar(symbol Y, F)
                                5    5
   o7 = (ApolarScheme{hPoly => Y  - Y   }, map(S,QQ[Y , Y ],{x + y, - z + t}))
                                0    1               0   1
                      idX => ideal(Y Y )
                                    0 1
                      Xdeg => 2
                      Xred => true
   o7 : Sequence
\end{verbatim}
}

\subsection{Ternary cubics}\label{sec: Aronhold}
\Alessandro{The cases of homogeneous polynomials with three essential variables are dealt with the function {\tt planar5Apolar}. Here, we want to explain how we implemented the cases of ternary cubics, i.e., the cases (3), (4) and (5). As we have seen, the distinction between these cases is given by the vanishing locus of $f^\perp_2$. }
{\small
\begin{verbatim}
   i8 : S = QQ[x,y,z];
   i9 : F = random(3,S);              -- case (3)
   i10 : G = random(QQ)*x^3 + y*z^2;   -- case (4)
   i11 : H = x*y^2 + y*z^2;            -- case (5)  
   -- Consider the degree 2 part of the apolar ideal
   i12 : Fperp2 = ideal(select(first entries gens perpId(F), i->degree(i)=={2}));
   o12 : Ideal of S
   i13 : Gperp2 = ideal(select(first entries gens perpId(G), i->degree(i)=={2}));
   o13 : Ideal of S
   i14 : Hperp2 = ideal(select(first entries gens perpId(H), i->degree(i)=={2}));
   o14 : Ideal of S   
   -- Check the properties of the corresponding vanishing locus
   i15 : dim Fperp2
   o15 = 0  
   i16 : primaryDecomposition Gperp2
                     2
   o16 = {ideal (x, y ), ideal (y, z)}
   o16 : List
   i17 : primaryDecomposition Hperp2, radical Hperp2
                              2   2
   o17 = ({ideal (x*z, x*y - z , x )}, ideal (z, x))
   o17 : Sequence
\end{verbatim}
}
\Alessandro{First, we consider the case (3). The general plane cubic has rank $4$ and, as explained in \cite[Section 3.4]{CCO17}, we know that the Waring locus is dense in the whole plane of ternary linear forms. In other words, given a random ternary cubic $\ttF$ and a random ternary linear form ${\tt L}$, there exists a coefficient ${\tt c}$ such that the cubic ${\tt F'}$ defined as ${\tt F - c*L^3}$ has rank $3$. Then, ${\tt F'}$ has a unique decomposition which is easy to compute. In order to compute the suitable value of ${\tt c}$, we need to intersect the line spanned by $\ttF$ and the third power of ${\tt L}$ with the (Zariski closure) of the space of plane cubics of rank $3$ (see Remark \ref{remark: Aronhold invariant}).

This is a function to compute the Aronhold invariant of a given cubic with three essential variables.
}
{\small
\begin{verbatim}
   aronhold = method();
   aronhold (RingElement) := F -> (
       R := ring F;    V := (entries vars R)_0;
       K := matrix{{0,-V_2,V_1},{V_2,0,-V_0},{-V_1,V_0,0}};
       C := diff(basis(1,R), transpose diff(basis(1,R),F));
       KF := diff(K,C);
       Pf := pfaffians(8,KF); 
       if Pf != sub(ideal (),R) then return Pf_0 else return 0_R
    )
\end{verbatim}
}
\Alessandro{Now, we can find the suitable coefficient to reduce the rank of the general cubic $\ttF$ in $S = {\tt QQ[x,y,z]}$. }
{\small
\begin{verbatim}
   i19 : L = random(1,S)
         2     7    7
   o19 = -x + --y + -z
         7    10    9
   o19 : S
   i20 : R = QQ[c][x,y,z];
   i21 : F' = sub(F,R) - c*(sub(L,R))^3;
   i22 : Ic = ideal aronhold F'
                 1217672402543    305183
   o22 = ideal(- -------------c + ------)
                   2083725000       125
   o22 : Ideal of R
   i23 : F' = sub(sub(F',R/Ic),S);
\end{verbatim}
}
\Alessandro{Hence, ${\tt F'}$ has rank $3$ and a unique decomposition, which is given by $Z((f')^\perp_2)$. By adding the point corresponding to the form ${\tt L}$, we conclude and find the ideal ${\tt IX}$ of a minimal set of points apolar to $\ttF$.}
{\small
\begin{verbatim}
   i24 : IP = ideal(basis(1,S) * gens kernel transpose (coefficients(L))_1)
   o24 = ideal (- 49x + 20y, - 49x + 18z)
   o24 : Ideal of S
   i25 : IX' = ideal(select(first entries gens perpId(F'), i->degree(i)=={2}));
   o25 : Ideal of S
   i26 : IX = intersect(IX',IP);
   o26 : Ideal of S
   -- Check if IX given a minimal set of points apolar to F
   i27 : dim IX, degree IX, IX == radical IX, isSubset(IX,perpId(F))
   o27 = (1, 4, true, true)
   o27 : Sequence
\end{verbatim}
}
Now, we consider the case (4). 
{\small
\begin{verbatim}
   i28 : G = random(QQ)*x^3+y*z^2
        5 3      2
   o28 = -x  + y*z
        8
   o28 : S
   i29 : R = QQ[c][x,y,z];
   i30 : G' = sub(G,R) - c*x^3;
   i31 : Ic = trim minors(3,cat(1,G'))
   o31 = ideal(8c - 5)
   o31 : Ideal of R
   i32 : G' = sub(sub(G',R/Ic),S);
   i33 : perpId G'
                   2   3
   o33 = ideal (x, y , z )
   o33 : Ideal of S
   -- Use Sylvester's Algorithm to find a minimal set apolar to G'
   i34 : IX' = ideal(x,z^3 - (random(QQ)*y+random(QQ)*z)*y^2);
   o34 : Ideal of S
   -- Adding the point (1:0:0) corresponding to the linear form `x', we conclude
   i35 : IX = intersect(IX',ideal(y,z));
   o35 : Ideal of S
   -- Check if IX given a minimal set of points apolar to G
   i36 : dim IX, degree IX, IX == radical IX, isSubset(IX,perpId(G))
   o36 = (1, 4, true, true)
   o36 : Sequence
\end{verbatim}
}
\Alessandro{As regards the case (5), if we consider a ternary cubic {\tt H} of maximal rank $5$, as explained in \cite[Section 3.4]{CCO17}, we can use a random linear form {\tt L} to reduce the rank. Then, we apply the previous cases. We can check that the rank of {\tt H-L\^{}3} drops by looking at the degree $2$ part of the apolar ideal, as explained.}
{\small
\begin{verbatim}
   i37 : H = x*y^2 - y*z^2
            2      2
   o37 = x*y  - y*z
   i38 : H' = H - L^3;
   i39 : Hperp2' = ideal(
                      select(first entries gens perpId(H'), i->degree(i)=={2})
                   )
                   2                    2               2   2
   o39 = ideal (24y  + 5x*z - 12y*z - 6z , x*y - x*z + z , x  - 2x*z)
   o39 : Ideal of S
   i40 : dim Hperp2'
   o40 = 0
\end{verbatim}
}
\subsection{Rank $5$ plane quartics} 
\Alessandro{Here, we want to comment the cases (9) and (10) of plane quartics $f$ having $h_f(2) = 5$. If the unique apolar conic $C$ is irreducible (case (9)), then we can reduce the rank of $f$ by taking a generic point on the conic; see Lemma \ref{lemma: plane quartics rank 5}(a). In our implementation, this is done by considering the intersections of a generic line and the conic $C$. Since this involves solving a quadratic equation which might not have solution of {\tt QQ}, in this case, the output of our main function {\tt minimalApolar5} depends on a parameter satisfying that quadratic equation and, for this reason, the ideal {\tt idX} has degree $10$ instead of $5$.}
{\small
\begin{verbatim}
  i41 : S = QQ[x,y,z];
  i42 : F = sum for i to 4 list (random(1,S))^4;
  i43 : first minimalApolar5(symbol Y, F)
                                         4              3                2 2   
  o43 = ApolarScheme{hPoly =>14000508577Y +118257495840Y Y +394649884800Y Y ...
                                         0              0 1              0 1 
                     idX => ideal (42277476088772685406212514432670091701648...
                     Xdeg => 10
                     Xred => true
   o43 : ApolarScheme              
   i44 : ring X#idX
              QQ[a][Y , Y , Y ]
                     0   1   2
   o44 = ----------------------------
         2
        a  - 31293683294493204311665
  o44 : QuotientRing                                                                                                       
\end{verbatim}
}
\Alessandro{When the unique apolar conic is reducible $C = L_1L_2$ (case (10)), we proceed in a similar way as in the previous case. However, it is not enough to consider a generic point on $C$ because, as we said in Lemma \ref{lemma: plane quartics rank 5}(2), if the intersection point $Q = L_1 \cap L_2$ is forbidden for the form $f$, then the Waring locus is dense only in one of the two lines. We see that in an example which also shows how we implemented the procedures explained in Theorem \ref{thm: main}(9-10) to find a minimal set of points apolar to $f$.

We consider an example where $f^\perp_2 = (xz)$.}
{\small
\begin{verbatim}
   i56 : G = x^2*y^2;
   i57 : L1 = random(QQ)*y + random(QQ)*z;
   i58 : L2 = random(QQ)*y + random(QQ)*z;
   i59 : F = L1^4 + L2^4 + G;
   i60 : perpId F
                         2           2       3        3            2         
   o60 = ideal (x*z, 648y z - 1467y*z  + 560z , 46656y  - 198801y*z  + ...
   o60 : Ideal of S
\end{verbatim}
}
Now, we consider a random point on the line $\{x = 0\}$.
{\small
\begin{verbatim}
   i61 : L = random(QQ)*y + random(QQ)*z
         3
   o61 = -y + 10z
         8
   o61 : S
   i62 : R = QQ[c][x,y,z];
   i63 : F1 = sub(F,R) - c*sub(L^4,R);
   i64 : Ic = radical minors(5,cat(2,F1))
   o64 = ideal(- 10368345145825c + 717382656)
   o64 : Ideal of R
   i65 : F1 = sub(sub(F1,R/Ic),S);
   i66 : F1perp2 = ideal select(first entries gens perpId F1,i->degree(i)=={2})
                                 2                                2
   o66 = ideal (x*z, 84009951144x  - 26206863345y*z + 26903620240z )
   o66 : Ideal of S
   i67 : dim F1perp2, degree F1perp2
   o67 = (1, 4)
   o67 : Sequence
   i68 : netList primaryDecomposition F1perp2
         +---------------------------------------------+
   o68 = |ideal (5241372669y - 5380724048z, x)         |
         +---------------------------------------------+
         |        2                  2                 |
         |ideal (z , x*z, 9334439016x  - 2911873705y*z)|
         +---------------------------------------------+
\end{verbatim}
}
Since {\tt F1} has the vanishing locus of the homogeneous part of degree $2$ which is not among the cases listed in Theorem \ref{thm: main}, we conclude that it has rank $6$. Hence, the random point on the line $\{x = 0\}$ is not in the Waring locus of {\tt F}. Now, we proceed by considering a random point on the line $\{z = 0\}$.
{\small
\begin{verbatim}
   i69 : L = random(QQ)*x + random(QQ)*y
         9    1
   o69 = -x + -y
         2    3
   o69 : S
   i70 : R = QQ[c][x,y,z];
   i71 : F2 = sub(F,R) - c*sub(L^4,R);
   i72 : Ic = radical minors(5,cat(2,F2))
   o72 = ideal(- 81c + 2)
   o72 : Ideal of R
   i73 : F2 = sub(sub(F2,R/Ic),S);
   i74 : F2perp2 = ideal select(first entries gens perpId F2,i->degree(i)=={2})
                        2                 2                   2
   o74 = ideal (x*z, 32x  + 432x*y + 5832y  - 13203y*z + 5040z )
   o74 : Ideal of S
   i74 : dim F2perp2, degree F2perp2, F2perp2 == radical F2perp2
   o74 = (1, 4, true)
\end{verbatim}
}
Hence, {\tt F2} is a quartic of rank $4$ whose apolar ideal in degree $2$ defines a minimal apolar set of points. Hence, we we can find a minimal set of $5$ points apolar to {\tt F}.
{\small
\begin{verbatim}
   i75 : IP = ideal(basis(1,S) * gens kernel (diff(vars S,L)))
   o75 = ideal (- 2x + 27y, z)
   o75 : Ideal of S
   i76 : IX = intersect(IP,F2perp2)
                         2           2       3      3           3             
   o76 = ideal (x*z, 648y z - 1467y*z  + 560z , 512x  - 1259712y  + ...
   o76 : Ideal of S
   i77 : dim IX, degree IX, IX == radical IX, isSubset(IX,perpId(F))
   o77 = (1, 5, true, true)
   o77 : Sequence
\end{verbatim}}
\subsection{Main function}
\Alessandro{All procedures listed in Theorem \ref{thm: main} have been collected in the function {\tt minimalApolar5} that produces a minimal set of points apolar to a given polynomial of rank at most $5$ in any number of variables and any degree by using the suitable algorithm, as explained in Theorem \ref{thm: main}. Here, we want to present some tests we made by using a personal computer with processor Intel Core i7 with 2,2 GHz. 

We first tested the efficiency of the main function in relation to the number of essential variables. In particular, we considered five different cases of minimal apolar set:
\begin{enumerate}
\item $5$ generic points in a $\PP^4$;
\item $5$ generic points in a $\PP^3$;
\item $4$ generic coplanar points plus a generic point;
\item $3$ collinear points plus two generic points;
\item $5$ generic points in a $\PP^2$.
\end{enumerate}
Here is the code used:}
\begin{verbatim}
   -- fix:   d = degree;   n = number of variables
   S = QQ[x_0..x_n];
   -- fix the "essential variables"
   L = for i to 4 list random(1,S);
   -- A) five generic points in P^4 
   F = sum for i to 4 list L_i^d;
   -- B) five generic points in P^3
   G = sum for i to 4 list (L_0 + random(QQ)*L_1 + random(QQ)*L_2 + random(QQ)*L_3)^d;
   -- C) four generic points in P^2 + 1 generic point
   H = L_0^d + sum for i to 3 list (L_1 + random(QQ)*L_2 + random(QQ)*L_3)^d;
   -- D) three collinear points in P^1 + 2 generic points
   K = L_0^d + L_1^d + sum for i to 2 list (random(QQ)*L_2 + random(QQ)*L_3)^d;
   -------- if d = 3: K = L_0^3 + L_1^3 + L_2*L_3^2;
   -- E) five generic points in P^2
   M = sum for i to 4 list (random(QQ)*L_0 + random(QQ)*L_1 + random(QQ)*L_2)^d;
   -------- if d = 3: M = L_0*L_1^2 + L_1*L_2^2;
   time minimalApolar5(symbol Y, F)
   time minimalApolar5(symbol Y, G)
   time minimalApolar5(symbol Y, H)
   time minimalApolar5(symbol Y, K) 
   time minimalApolar5(symbol Y, M)
\end{verbatim}

After fixing the degree $d = 3,4,5$, we let the number of essential variables grow. The tables in Figure \ref{fig: deg3}, \ref{fig: deg4} and \ref{fig: deg5} describe the time needed for our computations.
{\begin{figure}[H]
{\includegraphics[scale=0.47]{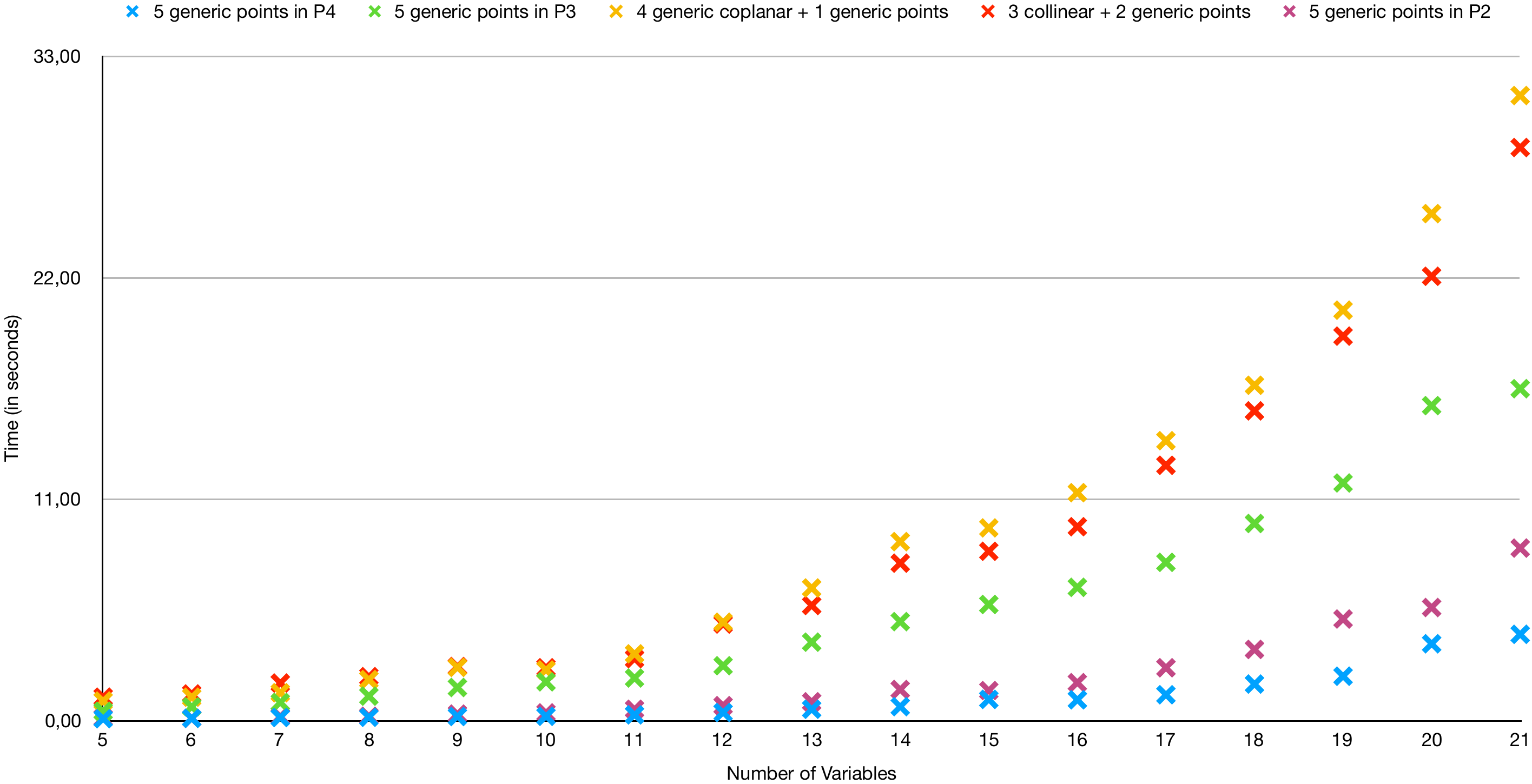}}
\caption{Tests with fixed degree equal to $3$.}
\label{fig: deg3}
\end{figure}
\begin{figure}[H]
{\includegraphics[scale=0.47]{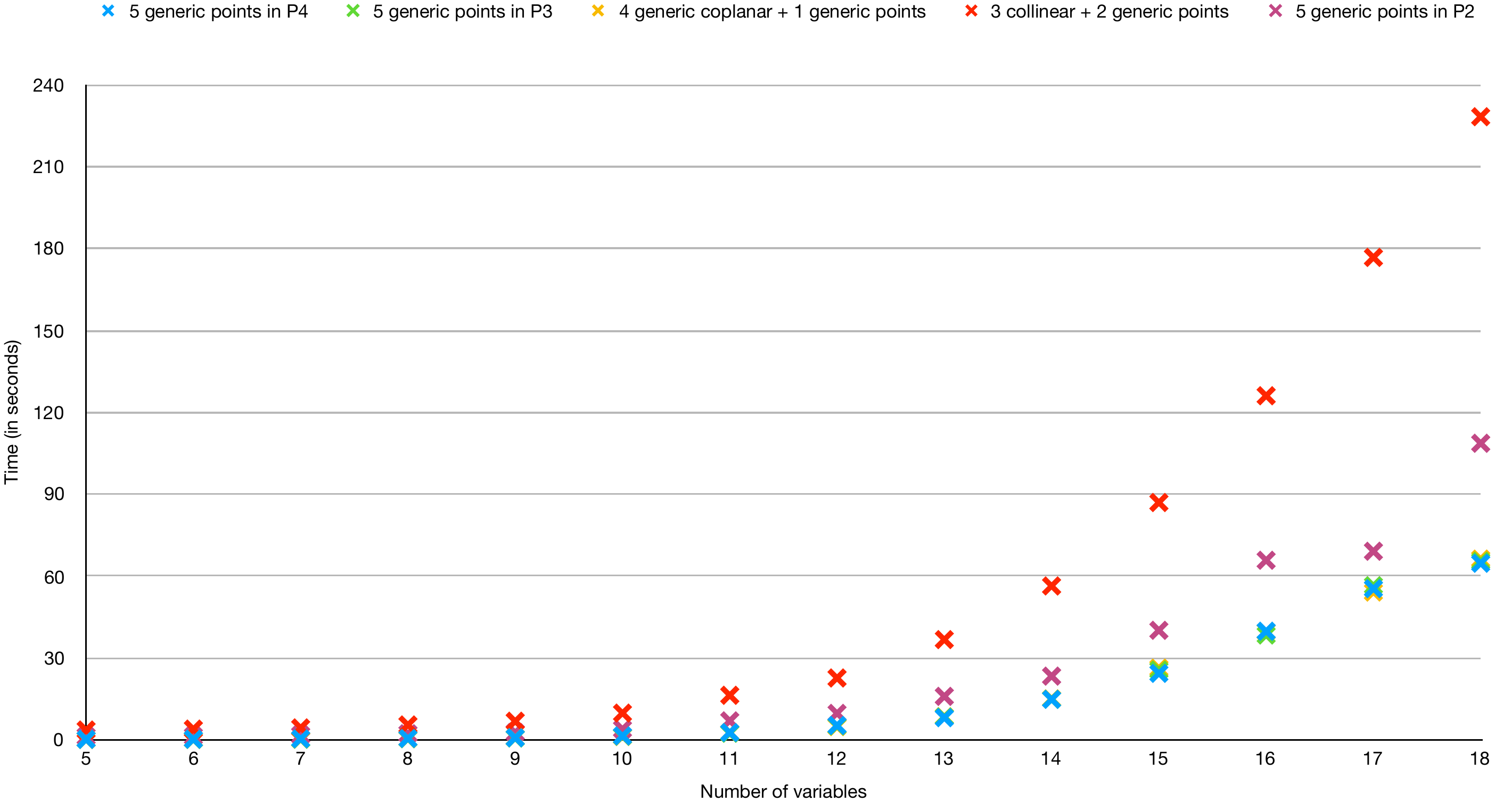}}
\caption{Tests with fixed degree equal to $4$.}
\label{fig: deg4}
\end{figure}\begin{figure}[H]
{\includegraphics[scale=0.47]{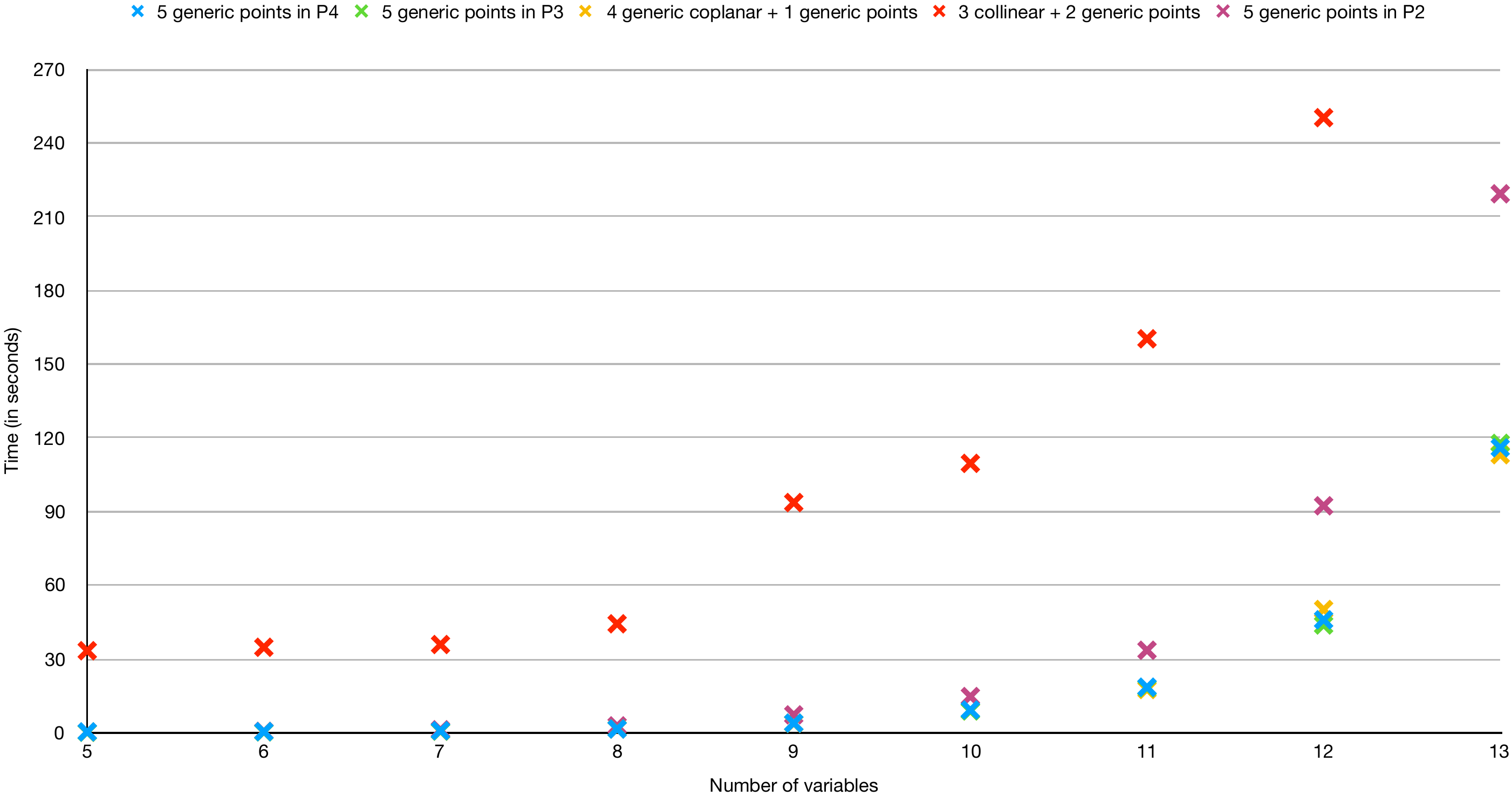}}
\caption{Tests with fixed degree equal to $5$.}
\label{fig: deg5}
\end{figure}}

We want to underline that the first step of the function {\tt minimalApolar5} is to reduce the polynomial in a minimal set of variables. This is the reason why the the function works quite efficiently also in a large set of variables. It seems that the complexity of our function {\tt minimalApolar5} depends more on the degree of the polynomial: we tested the same cases as before by fixing the number of variables ($n = 5$) and by letting the degree grow. Here is the table describing the time needed for our computations; see Figure \ref{fig: vars6}.

\begin{figure}[H]
{\includegraphics[scale=0.47]{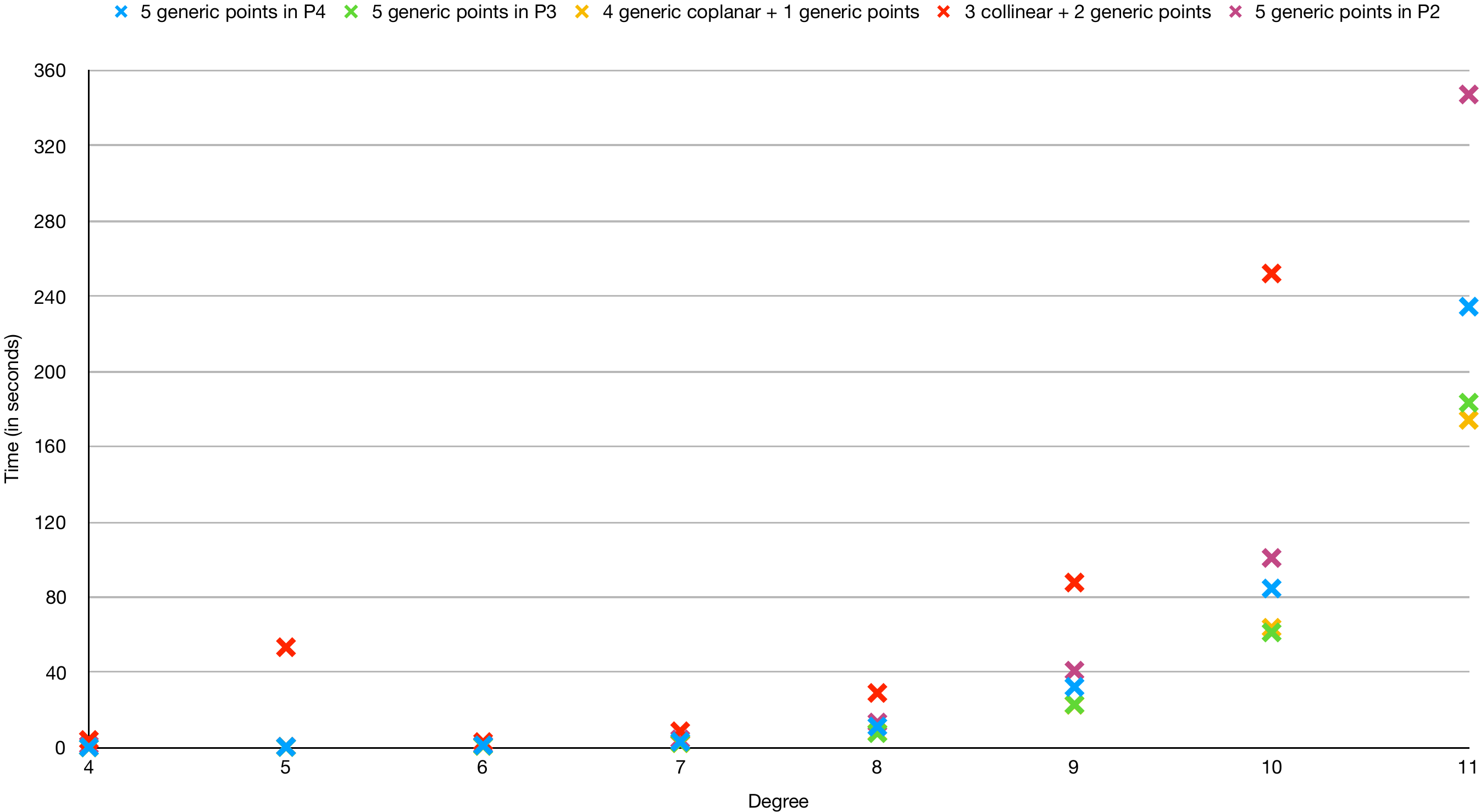}}
\caption{Tests with fixed number of variables equal to $6$. Note that,
  in the case of three collinear and two generic points, the degree
  $5$ is special because it requires a procedure different than the cases with larger degree; it follows from Theorem \ref{thm: main}(12).}
\label{fig: vars6}
\end{figure}

\bibliographystyle{alpha}
\bibliography{Oneto_references.bib,BM_references.bib}

\begin{thebibliography}{BHMT17}

\bibitem[AH95]{AH95}
James Alexander and Andr{\'e} Hirschowitz.
\newblock Polynomial interpolation in several variables.
\newblock {\em Journal of Algebraic Geometry}, 4(2):201--222, 1995.

\bibitem[BBKT15]{BBKT15}
Weronika Buczy{\'n}ska, Jaros{\l}aw Buczy{\'n}ski, Johannes Kleppe, and Zach
  Teitler.
\newblock Apolarity and direct sum decomposability of polynomials.
\newblock {\em Michigan Math. J.}, 64(4):675 -- 719, 11 2015.

\bibitem[BBT13]{BBT13}
Weronika Buczy{\'n}ska, Jaros{\l}aw Buczy{\'n}ski, and Zach Teitler.
\newblock Waring decompositions of monomials.
\newblock {\em Journal of Algebra}, 378:45--57, 2013.

\bibitem[BCMT10]{BCMT10}
Jerome Brachat, Pierre Comon, Bernard Mourrain, and Elias Tsigaridas.
\newblock Symmetric tensor decomposition.
\newblock {\em Linear Algebra and its Applications}, 433(11-12):1851--1872,
  2010.

\bibitem[BGI11]{BGI11}
Alessandra Bernardi, Alessandro Gimigliano, and Monica Ida.
\newblock Computing symmetric rank for symmetric tensors.
\newblock {\em Journal of Symbolic Computation}, 46(1):34--53, 2011.

\bibitem[BHMT17]{BHMT17}
Jaros{\l}aw Buczy{\'n}ski, Kangjin Han, Massimiliano Mella, and Zach Teitler.
\newblock On the locus of points of high rank.
\newblock {\em arXiv preprint arXiv:1703.02829}, 2017.

\bibitem[Car06]{Car06}
Enrico Carlini.
\newblock Reducing the number of variables of a polynomial.
\newblock In {\em Algebraic geometry and geometric modeling}, pages 237--247.
  Springer, 2006.

\bibitem[CCG12]{CCG12}
Enrico Carlini, Maria~Virginia Catalisano, and Anthony~V Geramita.
\newblock The solution to the {W}aring problem for monomials and the sum of
  coprime monomials.
\newblock {\em Journal of algebra}, 370:5--14, 2012.

\bibitem[CCO17]{CCO17}
Enrico Carlini, Maria~Virginia Catalisano, and Alessandro Oneto.
\newblock Waring loci and the {S}trassen conjecture.
\newblock {\em Advances in Mathematics}, 314:630--662, 2017.

\bibitem[Cle61]{Cle61}
A~Clebsch.
\newblock Ueber die knotenpunkte der hesseschen fl{\"a}che, insbesondere bei
  {O}berfl{\"a}chen dritter {O}rdnung.
\newblock {\em Journal f{\"u}r die reine und angewandte Mathematik},
  59:193--228, 1861.

\bibitem[CS11]{CS11}
Gonzalo Comas and Malena Seiguer.
\newblock On the rank of a binary form.
\newblock {\em Foundations of Computational Mathematics}, 11(1):65--78, 2011.

\bibitem[Dol12]{Dol12}
Igor~V Dolgachev.
\newblock {\em Classical algebraic geometry: a modern view}.
\newblock Cambridge University Press, 2012.

\bibitem[Eis05]{eisenbud_geometry_2005}
David Eisenbud.
\newblock {\em The Geometry of Syzygies: A Second Course in Commutative Algebra
  and Algebraic Geometry}.
\newblock {Springer}, New York, NY, 2005.
\newblock OCLC: 249751633.

\bibitem[GM16]{GM16}
Francesco Galuppi and Massimiliano Mella.
\newblock Identifiability of homogeneous polynomials and {C}remona
  transformations.
\newblock {\em arXiv preprint arXiv:1606.06895}, 2016.

\bibitem[GS02]{M2}
Daniel~R Grayson and Michael~E Stillman.
\newblock Macaulay 2, a software system for research in algebraic geometry,
  2002.

\bibitem[IK06]{IK06}
Anthony Iarrobino and Vassil Kanev.
\newblock {\em Power sums, {G}orenstein algebras, and determinantal loci}.
\newblock Springer, 2006.

\bibitem[Lan12]{Lan12}
Joseph~M Landsberg.
\newblock {\em Tensors: geometry and applications}, volume 128.
\newblock American Mathematical Society Providence, RI, 2012.

\bibitem[LO13]{LO13}
Joseph~M Landsberg and Giorgio Ottaviani.
\newblock Equations for secant varieties of {V}eronese and other varieties.
\newblock {\em Annali di Matematica Pura ed Applicata}, 192(4):569--606, 2013.

\bibitem[LT10]{LT10}
Joseph~M Landsberg and Zach Teitler.
\newblock On the ranks and border ranks of symmetric tensors.
\newblock {\em Foundations of Computational Mathematics}, 10(3):339--366, 2010.

\bibitem[Mac94]{Mac16}
Francis~S. Macaulay.
\newblock {\em The algebraic theory of modular systems.}
\newblock Cambridge University Press, Cambridge, with an introduction by paul
  roberts. revised reprint of the 1916 original edition, 1994.

\bibitem[OO13]{OO13}
Luke Oeding and Giorgio Ottaviani.
\newblock Eigenvectors of tensors and algorithms for {W}aring decomposition.
\newblock {\em Journal of Symbolic Computation}, 54:9--35, 2013.

\bibitem[Ott09]{Ott09}
Giorgio Ottaviani.
\newblock An invariant regarding {W}aring's problem for cubic polynomials.
\newblock {\em Nagoya Mathematical Journal}, 193:95--110, 2009.

\bibitem[RS00]{RS00}
Kristian Ranestad and Frank-Olaf Schreyer.
\newblock Varieties of sums of powers.
\newblock {\em Journal f{\"u}r die reine und angewandte mathematik}, pages
  147--182, 2000.

\bibitem[Syl51]{Syl51}
James~Joseph Sylvester.
\newblock {LX}. on a remarkable discovery in the theory of canonical forms and
  of hyperdeterminants.
\newblock {\em Philosophical Magazine Series 4}, 2(12):391--410, 1851.

\end{thebibliography}
\end{document}